\newtheorem{theorem}{Theorem}[section]
\newtheorem{lemma}[theorem]{Lemma}
\newtheorem{definition}[theorem]{Definition}
\newtheorem{remark}[theorem]{Remark}
\author{Philipp Grohs\footnote{TU Wien, Institute of Discrete Mathematics and Geometry, Wiedner Hauptstrasse 8-10, 1040 Wien, Austria. Email: philippgrohs@gmail.com, web: http://www.dmg.tuwien.ac.at/grohs, phone: +43 58801 11318.}}
\title{Continuous Shearlet Frames and Resolution of the Wavefront Set}
\date{}
\begin{document}

\maketitle
\begin{abstract}
In recent years directional multiscale transformations like the curvelet- or shearlet transformation
have gained considerable attention. The reason for this is that these transforms are - unlike more traditional transforms like wavelets - able to efficiently handle data with 
features along edges. 
The main result in \cite{Labate2009} confirming this property for shearlets is due to Kutyniok and Labate where it
is shown that for very special functions $\psi$ with frequency support in a compact conical wegde the decay rate 
of the shearlet coefficients of a tempered distribution $f$ 
with respect to the shearlet $\psi$ can resolve the Wavefront Set of $f$.
We demonstrate that the same result can be verified under much weaker assumptions on $\psi$, namely to
possess sufficiently many anisotropic vanishing moments. We also show how to build frames for
$L^2(\mathbb{R}^2)$ from any such function.  To prove our statements we develop a new approach based on  an adaption of the Radon transform
to the shearlet structure.
\end{abstract}
\tableofcontents
\newcommand{\Z}{{\mathbb Z}}
\newcommand{\N}{{\mathbb N}}
\newcommand{\R}{{\mathbb R}}
\newcommand{\schoen}{\mathcal}
\newcommand{\V}{{\schoen V}}
\newcommand{\F}{{\schoen F}}
\newcommand{\T}{{\schoen T}}
\newcommand{\A}{{\schoen A}}
\newcommand{\p}{{p}}
\newcommand{\q}{{\bf q}}
\newcommand{\h}{{\bf h}}
\newcommand{\e}{{\bf e}}
\newcommand{\Pp}{{\schoen P}}
\newcommand{\Ss}{{S}}
\newcommand{\Uu}{{\schoen U}}
\newcommand{\ii}{{\bf i}}
\renewcommand{\j}{{\bf j}}
\renewcommand{\k}{{\bf k}}
\renewcommand{\d}{{\bf d}}
\renewcommand{\l}{{\bf l}}
\newcommand{\C}{\mathcal{C}}
\newcommand{\D}{\mathcal{D}}
\section{Introduction}
\subsection{Previous work and notation}
One of the main themes of computational harmonic analysis is to represent a given function by its inner products with respect to a given set of 'atoms'. 
Prominent examples for such representations are the \emph{Gabor transform} \cite{Grochenig2000} or \emph{Wavelet Transforms} \cite{Daubechies1992}. 
A common feature of most of these 'classical' representations is that they are \emph{isotropic}, meaning that they treat every direction in space equally.
For this reason they are not able to efficiently and accurately handle directional phenomena at different scales. This is certainly a large drawback 
considering the fact that in many applications, like for example image processing, the main information lies precisely in the directional features (edges).

Until recently, one mostly had to resort to adaptive methods in order to capture such phenomena, but in 2003 Candes and Donoho managed a breakthrough in 
this problem by introducing the \emph{curvelet} transform \cite{Candes1999,candes2003,candes2003a} which can be used to analyze functions defined on the plane $\R^2$. 

The idea behind curvelets is to carve up the frequency (i.e. wavenumber- ) domain into circular wedges each corresponding to a scale 
$a$ and a direction $\theta\in (0,2\pi]$ obeying 
the \emph{parabolic scaling relation} 

\begin{equation}\label{eq:scalrel}
	width \sim length^2.
\end{equation}

Then, similarly to the Littlewood-Paley construction \cite{Frazier1991}, Candes and Donoho construct functions
$\gamma_{a,\theta,0}(x):=\gamma_{a,0,0}(\mbox{R}_\theta x),\ x\in \R^2$,
$\mbox{R}_\theta$ denoting the rotation matrix by $\theta$, which have frequency support in the wedge corresponding to scale $a$ and direction $\theta$. 
To also localize in space they use the translates of these functions and arrive at the family of 'atoms'

$$
	\gamma_{a,\theta,t}(x) = \gamma_{a,0,0}\big(\mbox{R}_\theta(x-t)\big),\quad a\in \R_+, \theta \in (0,2\pi],\ t\in \R^2,
$$

which correspond to scale $a$, location $t$ and direction $\theta$. 
In \cite{candes2003} they prove a representation formula which states that any $L^2$-function $f$ can be fully and
stably recovered from its \emph{curvelet coefficients} $\langle f , \gamma_{a,\theta,t}\rangle$.
Moreover, they show that the decay rates of the curvelet coefficients give precise information on the 
directional behavior of $f$ \cite{candes2003}. 

The idea of partitioning the plane into 'parabolic wegdes' is not new, by the way and already occurs in \cite{Fefferman1973, Seeger1991}, although in a very different context.
It is also featured in \cite{Stein1993} under the name 'Second Dyadic Decomposition'. In \cite{Borup2007} various discrete curvelet-like frame constructions are given
based on the same partition and the notion of \emph{decomposition spaces} \cite{Feichtinger1985,Feichtinger1987}.

In the construction of the curvelet system one starts with the construction of the functions $\gamma_{a,0,0}$ which correspond to scale $a$. 
In particular, curvelets do not form an \emph{affine system}, they are not generated from one (or finitely many) functions.

Looking at wavelet transforms it might seem natural to define 
$\gamma_{a,0,0} = D_{A_a}\psi$, where $D_{A_a}$ denotes the dilation operator with the expanding matrix $A_a = \mbox{diag}(a,a^{1/2})$ 
(this choice of dilation matrix reflects the parabolic scaling relation (\ref{eq:scalrel})),
and consider the system $\gamma_{a,\theta,t} = T_t D_{R_\theta} D_{A_a} \psi$, where we define for $t\in \R^2$ the \emph{translation operator} 
$T_t: f\mapsto f(\cdot - t)$ and for $M\in \mbox{GL}(2,\R)$
the \emph{dilation operator} $D_M: f\mapsto \det (M)^{-1/2}f(M^{-1}\cdot)$. This is what has essentially been done in \cite{Smith1998} and
goes by the name 'Hart Smith's transform' in \cite{candes2003}. 
Note that the mapping $(a,\theta,t)\mapsto T_tD_{R_\theta}D_{A_a}$ does not carry the useful structure of a group representation. This lack of structure 
makes it more difficult to constuct \emph{tight} frames for this system.

Remarkably, a group structure \emph{can} be achieved if one replaces the rotation transforms 

$$
	R_\theta  = \left(\begin{array}{cc}\sin(\theta) & \cos(\theta)\\\cos(\theta) & -\sin(\theta)\end{array}\right)
$$

by \emph{shear transforms} defined by

$$
	S_s:= \left(\begin{array}{cc}1 & -s\\0 &1\end{array}\right)
$$

and consider the system

\begin{equation}\label{eq:sys}
	\psi_{ast}:= T_t D_{S_s} D_{A_a}\psi
\end{equation}

for some \emph{shearlet} $\psi$ \cite{Dahlke2008,Dahlke2009}. The directional component is now encoded in the shear parameter $s\in \R$.

\begin{definition}
The \emph{Shearlet Transform} of a function (or distribution) $f$ defined on $\R^2$ with respect to a function (or distribution) $\psi$ defined on $\R^2$ is defined as
\begin{equation}\label{eq:shdef}
	\mathcal{SH}_\psi f(a,s,t):= \langle f , \psi_{ast}\rangle ,\quad a\in \R_+, s\in \R, t\in \R^2
\end{equation}
and 
\begin{equation}
	\psi_{ast}(x_1,x_2)=a^{-3/4}\psi \big(\frac{x_1-t_1 + s(x_2-t_2)}{a},\frac{x_2-t_2}{a^{1/2}}\big).
\end{equation}
\end{definition}

As stated before, the system (\ref{eq:sys}) can be regarded as the orbit of a function $\psi$ under
the action of a unitary representation of a group, the so-called shearlet group 
$\mathbb{S} = \big( \R_+\times \R \times \R^2,\circ\big)$ with
$$
	(a,s,t)\circ (\tilde a ,\tilde s , \tilde t ) = (a\tilde  a , s + \tilde s \sqrt a , t + S_s A_s \tilde t).
$$
Using this structure one can apply the machinery of square integrable group representations to obtain
representation formulas for $L^2$-functions.
We denote by $\hat f$ the \emph{Fourier transform} of a function $f\in L^2(\R^2)$ defined by
$$
	\hat f(\omega) = \int_{\R^2}f(x)e^{-2\pi i x\omega}dx.
$$

\begin{definition}\label{thm:adm}
A function $\psi$ is admissible if and only if 
\begin{equation}\label{eq:admdef}
	C_\psi:=\int_{\R^2}\frac{|\hat \psi (\omega)|^2}{|\omega_1|^2} d\omega = 
	\int_\R\int_{\R_+}|(D_{S_s}D_{A_a}\psi)^\land (\xi)|^2a^{-3/2}dads< \infty. 
\end{equation}
Admissible functions are called \emph{shearlets}.
\end{definition}

The equivalence of the second and third term in (\ref{eq:admdef}) can be seen by using the
substitution $\omega(a,s) = D_{S_s^{-T}}D_{A_a^{-T}}\xi$. The reason why this works (meaning that the integral is independent of $\xi$) is just the group 
structure of the shearlet group and the fact that the left Haar measure is given by $a^{-3}dadsdt$ \cite{Dahlke2008}.
The following notion will often be used in the sequel:

\begin{definition}\label{def:moments}
We shall say that $f$ has $n$-vanishing moments in $x_1$-direction if 
\begin{equation}\label{def:amoments}
	\int_{\R^2}\frac{|\hat \psi (\omega)|^2}{|\omega_1|^{2n}} d\omega < \infty.
\end{equation}
\end{definition}

\begin{remark}
The reason for the teminology 'vanishing moments' for the condition (\ref{def:amoments}) is as follows: if we assume sufficient spatial decay of
$\psi$, the condition (\ref{def:amoments}) is (almost) equivalent to 
$$
	\int_{\R}x_1^k\psi(x_1,x_2) dx_1 = 0\quad \mbox{for all }x_2\in \R,\ k < n
	$$
see e.g. \cite{Mallat2009} for similar statements related to wavelets. 
\end{remark}

With some Fourier analysis one can show the following:

\begin{theorem}
If $\psi$ be an admissible shearlet, then for all $f\in L^2(\R^2)$ we have the representation
formula
\begin{equation}\label{eq:reprgroup}
	\|f\|_2^2 = \int_{(a,s,t)\in \mathcal{SH}}|\mathcal{SH}_\psi (f)(a,s,t)|^2 a^{-3}da ds dt.
\end{equation}
\end{theorem}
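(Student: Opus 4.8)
The plan is to deduce the identity \eqref{eq:reprgroup} from the admissibility condition \eqref{eq:admdef} by a Plancherel argument in the translation variable $t$, in the spirit of the general theory of square integrable group representations. Fix $f\in L^2(\R^2)$ and, for $a>0$ and $s\in\R$, abbreviate $\psi_{as}:=D_{S_s}D_{A_a}\psi$, so that $\psi_{ast}=T_t\psi_{as}$ and $\mathcal{SH}_\psi f(a,s,t)=\langle f,T_t\psi_{as}\rangle$. Since the dilations $D_{S_s}$ and $D_{A_a}$ are unitary, $\psi_{as}\in L^2(\R^2)$, and passing to the Fourier side gives
$$
	\mathcal{SH}_\psi f(a,s,t)=\int_{\R^2}\hat f(\omega)\,\overline{\widehat{\psi_{as}}(\omega)}\,e^{2\pi i t\cdot\omega}\,d\omega ,
$$
i.e.\ $\mathcal{SH}_\psi f(a,s,\cdot)$ is the inverse Fourier transform of $\hat f\cdot\overline{\widehat{\psi_{as}}}$, which lies in $L^1(\R^2)$ by Cauchy--Schwarz.

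First I would record, for each fixed $(a,s)$, the identity
$$
	\int_{\R^2}|\mathcal{SH}_\psi f(a,s,t)|^2\,dt=\int_{\R^2}|\hat f(\omega)|^2\,|\widehat{\psi_{as}}(\omega)|^2\,d\omega ,
$$
understood as an equality in $[0,\infty]$: if the right-hand side is finite this is Plancherel's theorem, and if it is infinite then $\hat f\cdot\overline{\widehat{\psi_{as}}}\notin L^2$, so its inverse Fourier transform cannot be in $L^2$ either and both sides equal $+\infty$. Then I would multiply by the positive weight $a^{-3}$, integrate over $(a,s)\in\R_+\times\R$, and apply Tonelli's theorem (the integrand is nonnegative and measurable, so no integrability hypothesis is needed) to swap the order of integration and obtain
$$
	\int_{\mathcal{SH}}|\mathcal{SH}_\psi f(a,s,t)|^2\,a^{-3}\,da\,ds\,dt=\int_{\R^2}|\hat f(\omega)|^2\Big(\int_\R\int_{\R_+}|\widehat{\psi_{as}}(\omega)|^2\,a^{-3}\,da\,ds\Big)\,d\omega .
$$

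It then remains to evaluate the inner integral. Using the dilation rule $\widehat{D_M g}=|\det M|^{1/2}\,\hat g(M^{T}\cdot)$ one computes $\widehat{\psi_{as}}(\omega)=a^{3/4}\,\hat\psi\big(a\omega_1,\,a^{1/2}(\omega_2-s\omega_1)\big)$, hence
$$
	\int_\R\int_{\R_+}|\widehat{\psi_{as}}(\omega)|^2\,a^{-3}\,da\,ds=\int_\R\int_{\R_+}\big|\hat\psi\big(a\omega_1,a^{1/2}(\omega_2-s\omega_1)\big)\big|^2\,a^{-3/2}\,da\,ds ,
$$
and the substitution $\eta=\big(a\omega_1,a^{1/2}(\omega_2-s\omega_1)\big)$, whose Jacobian is $a^{1/2}\omega_1^2$, turns the right-hand side into $\int_{\R^2}|\hat\psi(\eta)|^2|\eta_1|^{-2}\,d\eta=C_\psi$, independently of $\omega$; this is exactly the equivalence of the first two expressions in \eqref{eq:admdef}. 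Substituting back gives $\int_{\mathcal{SH}}|\mathcal{SH}_\psi f|^2\,a^{-3}\,da\,ds\,dt=C_\psi\|f\|_2^2$, which is \eqref{eq:reprgroup} under the normalization $C_\psi=1$. The only genuinely delicate step is the last change of variables: it must be performed separately on $\{\omega_1>0\}$ and $\{\omega_1<0\}$, discarding the null set $\{\omega_1=0\}$, and one should verify that $\eta\mapsto(a,s)$ is a diffeomorphism onto $\R^2$ for each fixed sign of $\omega_1$; the remaining manipulations are routine uses of Plancherel's and Tonelli's theorems.
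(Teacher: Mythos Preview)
The paper does not actually supply a proof of this theorem; it only says ``With some Fourier analysis one can show the following'' and points to the general theory of square integrable group representations. Your argument is exactly the standard Fourier-analytic proof the paper is alluding to: Plancherel in $t$, Tonelli in $(a,s)$, and the change of variables $(a,s)\mapsto\eta$ that the paper itself sketches in the remark right after Definition~\ref{thm:adm}. The same computation is later carried out in detail in the proof of Lemma~\ref{lem:mult} for the truncated parameter range, so your approach and the paper's are essentially identical.

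Two small remarks. First, the paper's formula \eqref{eq:reprgroup} is stated with $\|f\|_2^2$ rather than $C_\psi\|f\|_2^2$; you handled this correctly by noting the implicit normalization $C_\psi=1$. Second, be a little careful in the last paragraph: for fixed $\omega$ with $\omega_1\neq 0$, the map $(a,s)\mapsto\eta=(a\omega_1,a^{1/2}(\omega_2-s\omega_1))$ sends $(0,\infty)\times\R$ onto the half-plane $\{\eta:\operatorname{sign}\eta_1=\operatorname{sign}\omega_1\}$, not onto all of $\R^2$, so after the substitution you obtain $\int_{\operatorname{sign}\eta_1=\operatorname{sign}\omega_1}|\hat\psi(\eta)|^2\eta_1^{-2}\,d\eta$ rather than the full integral $C_\psi$. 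The paper glosses over the same point when asserting the equivalence of the second and third expressions in \eqref{eq:admdef}; the identity \eqref{eq:reprgroup} as stated needs the two half-plane integrals to coincide (which is automatic for real-valued $\psi$, the case of interest throughout).
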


From the admissibility condition we see that shearlets exist in abundance. In fact all they need to satisfy is
to have one vanishing moment in the $x_1$-direction or equivalently
to be a partial derivative in $x_1$-direction of a square integrable function.
We denote by $H_{(n_1,n_2)}(\R^2)$ the Sobolev space defined by
$$
	H_{(n_1,n_2)}(\R^2):=\{f\in L^2(\R^2):\ (\frac{\partial}{\partial x_1}\big)^{n_1}(\frac{\partial}{\partial x_2}\big)^{n_2}f \in L^2(\R^2)\}.
$$
\begin{theorem}\label{thm:shabl}
Let $\theta$ be in $H_{(n,0)}(\R^2)$ with $\hat \theta(0)\neq 0$. Then the function 
\begin{equation}\label{eq:der}
	\psi(x)=(-1)^n \big(\frac{\partial}{\partial x_1}\big)^n\theta(x)
\end{equation}
is a continuous shearlet with $n$ vanishing directional moments in $x_1$-direction.
Conversely, let $\psi$ be a continuous shearlet with $n$ vanishing moments. Then $\psi$
can be written in the form (\ref{eq:der}) with a function $\theta\in H_{(n,0)}(\R^2)$.
\end{theorem}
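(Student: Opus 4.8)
The plan is to carry out everything on the Fourier side, where the admissibility condition (\ref{eq:admdef}) and the vanishing-moment condition (\ref{def:amoments}) are already phrased and where $n$-fold differentiation in $x_1$ amounts to multiplication by $(2\pi i\omega_1)^n$. Thus, for $\psi$ defined from $\theta$ by (\ref{eq:der}) one has $\hat\psi(\omega)=(-1)^n(2\pi i\omega_1)^n\hat\theta(\omega)=(-2\pi i)^n\omega_1^n\hat\theta(\omega)$, hence the pointwise identity $|\hat\psi(\omega)|^2=(2\pi)^{2n}|\omega_1|^{2n}|\hat\theta(\omega)|^2$; this identity drives both implications. Since a shearlet has by definition at least one vanishing moment, I will assume $n\ge 1$ throughout.

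For the forward direction I would substitute the identity into Definition \ref{def:moments}, obtaining $\int_{\R^2}|\hat\psi(\omega)|^2|\omega_1|^{-2n}\,d\omega=(2\pi)^{2n}\int_{\R^2}|\hat\theta(\omega)|^2\,d\omega=(2\pi)^{2n}\|\theta\|_2^2<\infty$ because $\theta\in L^2(\R^2)$; this is exactly the assertion that $\psi$ has $n$ vanishing moments in $x_1$-direction. That $\psi\in L^2(\R^2)$ is immediate, since $\psi$ is (up to a sign) the $n$-th $x_1$-derivative of $\theta\in H_{(n,0)}(\R^2)$. Admissibility then follows by bounding $C_\psi$ in (\ref{eq:admdef}) through a split of the integral at $|\omega_1|=1$: on $\{|\omega_1|\le 1\}$ one has $|\omega_1|^{-2}\le|\omega_1|^{-2n}$ and the bound just obtained applies, while on $\{|\omega_1|>1\}$ one has $|\omega_1|^{-2}\le 1$ and the integral is dominated by $\|\psi\|_2^2$. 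The hypothesis $\hat\theta(0)\neq 0$ is not needed here; it is a non-degeneracy condition whose only role is to keep the count sharp, i.e.\ to prevent $\psi$ from having more than $n$ vanishing moments.

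For the converse, given a continuous shearlet $\psi\in L^2(\R^2)$ with $n$ vanishing moments I would \emph{define} $\theta$ through its Fourier transform by $\hat\theta(\omega):=(-2\pi i)^{-n}\omega_1^{-n}\hat\psi(\omega)$. The $n$-vanishing-moments hypothesis is precisely the statement that $\hat\theta\in L^2(\R^2)$, so the $L^2$-function $\theta$ with Fourier transform $\hat\theta$ is well-defined; moreover $\widehat{(\partial/\partial x_1)^n\theta}(\omega)=(2\pi i\omega_1)^n\hat\theta(\omega)=(-1)^n\hat\psi(\omega)\in L^2(\R^2)$, so $(\partial/\partial x_1)^n\theta\in L^2$ and hence $\theta\in H_{(n,0)}(\R^2)$. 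Reading the last identity backwards gives $(-1)^n(\partial/\partial x_1)^n\theta=\psi$, which is the required representation (\ref{eq:der}).

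I expect the one point that needs genuine care to be the legitimacy of the division by $\omega_1^n$ in the converse: a priori $\omega_1^{-n}\hat\psi$ is only a measurable function that might blow up along the line $\{\omega_1=0\}$, and it is exactly the $n$-vanishing-moments hypothesis that promotes it to a bona fide element of $L^2(\R^2)$; past that, everything is routine Plancherel bookkeeping. As a final remark, under the mild spatial decay discussed in the Remark following Definition \ref{def:moments} the condition (\ref{def:amoments}) corresponds to the vanishing of the genuine moments $\int_\R x_1^k\theta(x_1,x_2)\,dx_1$, $k<n$, and then $\hat\theta(0)\neq 0$ is just the statement $\int_\R\theta(x_1,x_2)\,dx_1\neq 0$, i.e.\ that $\psi$ picks up no further vanishing moment.
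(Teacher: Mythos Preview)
Your proof is correct and is exactly the natural argument; the paper itself does not give a proof but only states ``This is an easy exercise.'' Your Fourier-side bookkeeping, the split of $C_\psi$ at $|\omega_1|=1$ for admissibility, and the observation that the $n$-vanishing-moments hypothesis is precisely what makes $\omega_1^{-n}\hat\psi\in L^2$ in the converse, are all the expected steps; your remark on the role of $\hat\theta(0)\neq 0$ as a sharpness condition is also accurate.
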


\begin{proof}This is an easy exercise.
\end{proof}

The representation (\ref{eq:reprgroup}) has one drawback: while in the curvelet transform the directional
parameter $\theta$ ranges over a compact set, we now need to consider all shear directions $s\in \R$. It is
also easy to see that as $s$ gets large the directional resolution gets denser and denser, so that 
the parameter $s$ does not distribute the directions uniformly. 
In order to avoid this dependence on the choice of the $x_1$ and $x_2$-coordinates, Labate et. al. proposed the following
construction called 'shearlets on the cone' in \cite{Labate2005}:

Let $f\in L^2(\R^2)$. Then decompose $f = P_{\D} f + P_{\C}f + P_{\C^\nu}f$ where $P_\D$ is a frequency projection onto $[-2,2]^2$, $P_\D$ onto 
$\C =\{\xi : \ |\xi_1|\geq 2 ,\ |\frac{\xi_2}{\xi_1}|\le 1\}$
and $P_{\C^\nu}$ onto $\C^{\nu} =\{\xi : \ |\xi_2|\geq 2 ,\ |\frac{\xi_1}{\xi_2}|\le 1\}$. It is well-known from microlocal analysis that directional 
singularities with slope $\geq 1$ manifest themselves
as slow decay in $\C$ and singularities with slope $\le 1$ as slow decay in $\C^{\nu}$. Therefore $P_\C  f$ can
be seen as the part of $f$ containing singularities with slope $\geq 1$, $P_{\C^\nu}f$ as the part of $f$ containing singularities with slope 
$\le 1$ and $P_\D f$ as a smooth low-pass approximation of $f$.

The 'shearlets on the cone'-construction essentially\footnote{we have modified the construction a bit in order to remain closer to our later results, 
for the precise construction we refer to \cite{Labate2009}} goes as follows, denoting by $C_0^\infty$ the space of rapidly decaying $C^\infty$-functions:

Let $\psi$ be defined via $\hat \psi(\xi) = \hat \psi_1(\xi_1)\hat\psi_2(\frac{\xi_1}{\xi_1})$ where
$\hat\psi_1\in C^\infty_0(\R)$ is a wavelet which has frequency support $\mbox{supp }\hat \psi_1 \subseteq [-2,-1/2]\cup [1/2,2]$ and $\|\psi_2\|_2 = 1$ 
and $\hat \psi_2\in C_0^\infty(\R)$ with 
$\mbox{supp }\hat \psi_2\subseteq [-1,1]$, and $\hat \psi_2 >0$ on $(-1,1)$. 

Then with $W$ a suitable window function one can show the following \cite[Equation (3.7)]{Labate2009}:

\begin{eqnarray}\nonumber
	\|f\|_2^2 &=& \int_{t\in \R^2}|\langle P_\D f , T_t W\rangle |^2 dt + \int_{t\in \R^2}\int_{-2}^2\int_0^1
	|\mathcal{SH}_\psi (P_\C f)(a,s,t)|^2 a^{-3}da ds dt \\
	& & + \int_{t\in \R^2}\int_{-2}^2\int_0^1
	|\mathcal{SH}_{\psi^\nu} (P_{\C^\nu} f)(a,s,t)|^2 a^{-3}da ds dt,\quad \mbox{for all }f\in L^2(\R^2),
	\label{eq:reprorig}
\end{eqnarray}
where we let $\hat \psi^\nu(\xi_1,\xi_2) := \hat \psi(\xi_2,\xi_1)$ and
$\mathcal{SH}_{\psi^\nu} f (a,s,t)= \langle f, \psi_{ast}^\nu\rangle$.

More importantly, Kutyniok and Labate have shown in \cite{Labate2009} that the shearlet coefficients
in the representation (\ref{eq:reprorig}) characterize the \emph{Wavefront Set} $\mbox{WF}(f)$ of a tempered distribution $f$, which roughly means the
set of points $t\in \R^2$ and directions $s\in \R$ along which $f$ is not smooth
at $t$. See the next section on more information regarding the Wavefront Set.
The result \cite[Theorem 5.1]{Labate2009} is as follows:

\begin{theorem}\label{thm:kut}
Let $\psi$ be constructed according to the 'shearlets on the cone'-construction.
Let $f$ be a tempered distribution and $\mathcal{D}=\mathcal{D}_1\cup\mathcal{D}_2$, where
$\mathcal{D}_1 = \{(t_0,s_0)\in \R^2 \times [-1,1]:$ for $(s,t)$ in a neighborhood $U$ of $(s_0,t_0)$, 
$|\mathcal{SH}_\psi f(a,s,t)| = O(a^k)$ for all $k\in \mathbb{N}$, with the implied constant uniform over $U\}$ and 
$\mathcal{D}_2 = \{(t_0,s_0)\in \R^2 \times (1,\infty]:$ for $(1/s,t)$ in a neighbourhood $U$ of $(s_0,t_0)$, 
$|\mathcal{SH}_{\psi^\nu} f(a,s,t)| = O(a^k)$ for all $k\in \mathbb{N}$, with the implied constant uniform over $U\}$.
Then 
$$
	\mbox{WF}(f)^c = \mathcal{D}.
$$
\end{theorem}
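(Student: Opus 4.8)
The plan is to prove the two inclusions $\mbox{WF}(f)^c\subseteq\mathcal{D}$ and $\mathcal{D}\subseteq\mbox{WF}(f)^c$ separately. Since $\hat\psi^\nu(\xi_1,\xi_2)=\hat\psi(\xi_2,\xi_1)$ merely interchanges the cones $\mathcal{C}$ and $\mathcal{C}^\nu$, it suffices to treat the ``horizontal'' situation, relating $\mathcal{D}_1$ to the part of $\mbox{WF}(f)$ lying over directions of slope in $[-1,1]$; the $\mathcal{D}_2$/$\mathcal{C}^\nu$ case follows by exchanging $\xi_1$ and $\xi_2$. I would use the H\"ormander characterization: $(t_0,\xi_0)\notin\mbox{WF}(f)$ iff there exist a smooth compactly supported $\varphi$ with $\varphi\equiv 1$ near $t_0$ and an open cone $V\ni\xi_0$ on which $\widehat{\varphi f}$ is rapidly decreasing, where $\xi_0$ is normalised so that $\xi_{0,2}/\xi_{0,1}=s_0$. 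The identity to record at the outset is
\[
	\widehat{\psi_{ast}}(\xi)=a^{3/4}e^{-2\pi i t\xi}\,\hat\psi\bigl(a\xi_1,\,a^{1/2}(\xi_2-s\xi_1)\bigr);
\]
since in the ``shearlets on the cone'' construction $\hat\psi$ is supported in $\{|\xi_1|\in[1/2,2],\ |\xi_2/\xi_1|\le 1\}$, the function $\widehat{\psi_{ast}}$ is supported in $\{|\xi_1|\in[\tfrac1{2a},\tfrac2a],\ |\xi_2/\xi_1-s|\le a^{1/2}\}$, a set of measure $\asymp a^{-3/2}$ which, as $a\to 0$, concentrates at frequencies of size $\asymp a^{-1}$ in an arbitrarily narrow cone around slope $s$. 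This frequency concentration in scale and direction, together with the spatial concentration of $\psi_{ast}$ near $t$ at anisotropic scale $a\times a^{1/2}$, will drive both inclusions.

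For $\mbox{WF}(f)^c\subseteq\mathcal{D}$: fix $(t_0,s_0)$ with $(t_0,\xi_0)\notin\mbox{WF}(f)$ and choose $\varphi,V$ as above, with $V$ containing all slopes in $(s_0-\varepsilon,s_0+\varepsilon)$. Write $f=\varphi f+(1-\varphi)f$. For the first term, Parseval gives $\langle\varphi f,\psi_{ast}\rangle=\langle\widehat{\varphi f},\widehat{\psi_{ast}}\rangle$; once $a$ is small and $|s-s_0|<\varepsilon/2$, the support of $\widehat{\psi_{ast}}$ lies inside $V$ at frequencies $\asymp a^{-1}$, so from $|\widehat{\psi_{ast}}|\le a^{3/4}\|\hat\psi\|_\infty$, the bound $|\widehat{\varphi f}(\xi)|\le C_N|\xi|^{-N}$ valid on $V$, and the measure $\asymp a^{-3/2}$ of the support, we get $|\langle\varphi f,\psi_{ast}\rangle|\lesssim_N a^{N-3/4}$, i.e. $O(a^k)$ for every $k$, uniformly in $(s,t)$. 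For the second term I would write $\langle(1-\varphi)f,\psi_{ast}\rangle=\langle f,(1-\varphi)\psi_{ast}\rangle$ and check that all Schwartz seminorms of $(1-\varphi)\psi_{ast}$ are $O(a^k)$: $1-\varphi$ vanishes on a ball about $t_0$, so for $t$ near $t_0$ the argument $A_a^{-1}S_s^{-1}(x-t)$ of $\psi$ has norm $\gtrsim a^{-1/2}(1+|x|)$ on the support of that function, and the rapid decay of $\psi$ and all its derivatives converts every negative power of $a^{1/2}$ into smallness while keeping polynomial decay in $x$; pairing with the tempered distribution $f$ (only finitely many seminorms enter) then gives $O(a^k)$. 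Adding the two estimates places $(t_0,s_0)$ in $\mathcal{D}_1$.

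The inclusion $\mathcal{D}\subseteq\mbox{WF}(f)^c$ is the substantive part. Assume $|\mathcal{SH}_\psi f(a,s,t)|=O(a^k)$ for all $k$, uniformly for $|s-s_0|<\varepsilon$, $|t-t_0|<\varepsilon$; one may also record the a priori bound $|\mathcal{SH}_\psi f(a,s,t)|\le P(a^{-1},|s|,|t|)$ with $P$ a fixed polynomial, valid for all $(a,s,t)$ since $f$ is tempered and the Schwartz seminorms of $\psi_{ast}$ grow at most polynomially in $a^{-1},|s|,|t|$. Choose a smooth compactly supported $\varphi$ supported in a small ball about $t_0$, with $\varphi\equiv 1$ on a slightly smaller ball, and a narrow cone $V\ni\xi_0$ of slopes in $(s_0-\varepsilon/2,s_0+\varepsilon/2)$; the goal is that $\widehat{\varphi f}$ be rapidly decreasing on $V$. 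Insert $\varphi f$ into the (weak) reproducing formula for the admissible shearlet $\psi$,
\[
	C_\psi\,\varphi f=\int_{\R_+\times\R\times\R^2}\mathcal{SH}_\psi(\varphi f)(a,s,t)\,\psi_{ast}\,\frac{da\,ds\,dt}{a^3},
\]
apply the Fourier transform, evaluate at $\xi\in V$ with $|\xi|$ large, and split the domain of integration. Where $a$ is bounded below, $\widehat{\psi_{ast}}$ has bounded frequency support and contributes nothing at large $|\xi|$; where $a$ is small but $|s-s_0|\ge\varepsilon/2$, the support of $\widehat{\psi_{ast}}$ lies in a cone about slope $s$ disjoint from $V$, again no contribution; where $a$ is small, $s$ near $s_0$ but $|t-t_0|$ large, the function $\varphi\psi_{ast}$ has Schwartz seminorms $O(a^k)$ with extra decay $(1+|t|)^{-M}$ (because $\mathrm{supp}\,\varphi$ is separated from the concentration region of $\psi_{ast}$ near $t$), hence $|\mathcal{SH}_\psi(\varphi f)(a,s,t)|\lesssim_{k,M}a^k(1+|t|)^{-M}$, and the $t$-integral converges to something $O(a^k)=O(|\xi|^{-k})$. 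The remaining, core region is $a$ small, $s$ near $s_0$, $t$ near $t_0$, where one must show $\mathcal{SH}_\psi(\varphi f)(a,s,t)=\langle f,\varphi\psi_{ast}\rangle=O(a^k)$ for every $k$; this is the step where the hypothesis $\langle f,\psi_{ast}\rangle=O(a^k)$ must actually be used.

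The main obstacle is precisely this last step: the spatial cutoff $\varphi$ does not commute with the shearlet transform, so $\langle f,\varphi\psi_{ast}\rangle$ is not simply $\langle f,\psi_{ast}\rangle$ up to a negligible error --- the error $\langle f,(\varphi-1)\psi_{ast}\rangle$ fails to be $O(a^k)$ on a collar of fixed width around the boundary of $\{\varphi=1\}$. The way I would push through is to expand $\varphi\psi_{ast}$ as a superposition $\int c\,\psi_{a's't'}$ of shearlets at comparable scale $a'\asymp a$, comparable direction $|s'-s|\lesssim a^{1/2}$ and nearby location $t'$ (within the anisotropic box around $\mathrm{supp}\,\varphi$) with $L^1$-controlled coefficients $c$ --- that is, to treat $\varphi\psi_{ast}$ as a \emph{shearlet molecule} --- so that for $t$ near $t_0$ every participating $t'$ still lies in the neighbourhood where the hypothesis applies and $\langle f,\varphi\psi_{ast}\rangle=\int c\,\langle f,\psi_{a's't'}\rangle$ inherits the decay. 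Carrying out this molecular expansion, together with the uniformity in $\xi\in V$, is the genuinely technical part. It is exactly to sidestep this non-commuting-cutoff difficulty that I would ultimately prefer the route announced in the abstract: conjugate the whole problem by a Radon transform adapted to the shearlet structure, so that $\mathcal{SH}_\psi f(a,s,\cdot)$ becomes a one-dimensional continuous wavelet transform of the Radon projection of $f$ in the direction fixed by $s$; then the directional resolution statement reduces to the classical one-dimensional fact that rapid decay of wavelet coefficients near a point is equivalent to local smoothness there --- with the order of smoothness supplied by the $n$ anisotropic vanishing moments of $\psi$ --- and translating back through the Fourier slice theorem recovers $\mbox{WF}(f)^c=\mathcal{D}$, with the localization in $t_0$ now handled cleanly by the classical correspondence between the Radon transform and the wavefront set.
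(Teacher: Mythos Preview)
This theorem is quoted from Kutyniok--Labate and is not given an independent proof here; the paper instead proves the strict generalization Theorem~\ref{thm:rwf2} by combining the direct Theorem~\ref{thm:direct} with the inverse Theorem~\ref{thm:inverse}. Your argument for $\mbox{WF}(f)^c\subseteq\mathcal{D}$ is correct and is in fact closer to the original Kutyniok--Labate proof than to the paper's: you exploit the compact frequency support of the specific ``cone'' shearlet, so that $\mathrm{supp}\,\widehat{\psi_{ast}}$ sits in a narrow cone and the estimate is immediate. The paper's Theorem~\ref{thm:direct} is harder precisely because it avoids this assumption, replacing it by vanishing moments plus Sobolev regularity in~$x_2$ to control the frequency tails.

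For $\mathcal{D}\subseteq\mbox{WF}(f)^c$ you locate the real obstacle (the spatial cutoff does not commute with the transform), but your two suggested resolutions do not match what the paper does, and the Radon-transform sketch contains a misconception. The shearlet transform is \emph{not} a one-dimensional wavelet transform of $\mathcal{R}f(\cdot,s)$: the second argument of $\psi$ in $\psi_{ast}$ is dilated by $a^{1/2}$, so there is no factoring of $\mathcal{SH}_\psi f$ through the Radon projection of $f$. What the paper actually does (proof of Theorem~\ref{thm:inverse}) is to reconstruct $g=\int\mathcal{SH}_\psi f(a,s,t)\,\psi_{ast}\,a^{-3}\,da\,ds\,dt$ via the tight frame of Theorem~\ref{thm:frame3}, localize by a bump $\Phi$, and take the Radon transform of the \emph{localized reconstruction} $\Phi g$ in the fixed direction $s_0$. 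The key identity you are missing is
\[
\mathcal{R}(\Phi\psi_{ast})(u,s_0)=\int_\R\Phi(u-s_0x_2,x_2)\,\psi_{ast}(u-s_0x_2,x_2)\,dx_2=\mathcal{SH}_\psi\bigl(\Phi\,\delta_{x_1+s_0x_2-u}\bigr)(a,s,t),
\]
so the inner integral is itself a shearlet coefficient---of a localized line measure. The direct theorem, applied to this line measure, then gives rapid decay whenever $s\neq s_0$, which disposes of the off-direction piece of the $s$-integral; the near-direction piece uses the hypothesis on $\mathcal{SH}_\psi f$ directly. The non-commuting-cutoff issue is handled separately by Lemma~\ref{lem:coeffloc}, which shows that restricting the $t$-integration to a neighbourhood of $t_0$ changes $\Phi g$ only by a term whose Fourier transform already decays fast. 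Your molecular-expansion idea is plausible but is neither carried out nor what the paper does; the Radon route works, but not for the reason you give.
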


We would like to mention that curvelets and shearlets are not the only continuous transforms capable of
'resolving the Wavefront Set', see for example \cite{Cordoba1978,Sjostrand1982,Smith1998}.  From a computational point of view curvelets and shearlets have probably received the greatest attention recently (although many ideas are already contained in \cite{Smith1998}). 

{\bf A Word on Notation.} We shall use the symbol $|\cdot |$ indiscriminately for the absolute value on $\R, \R^2$ and $\mathbb{C}$. We usually denote vectors
in $\R^2$ by $x,t,\xi,\omega$ and their elements by $x_1,x_2,t_1,t_2,\dots$. In general it should always be clear to which space a variable belongs. The symbol $\|\cdot\|$ is reserved for various function space and operator norms.

\subsection{Contributions}

The motivation of the present work is the fact that on the one hand, by Theorem \ref{thm:shabl}
we know that shearlets exist in abundance, but on the other hand the 'shearlet on the cone'-construction described above is very specific. 
In particular it requires $\psi$ to have compact frequency support and thus infinite spacial support which might be undesirable for some applications
(the same caveat is valid for curvelets). 
So the question we would like to answer is: 'what is really needed for a function $\psi$ so that
a representation similar to (\ref{eq:reprorig}) and a result like Theorem \ref{thm:kut} are valid?'.
It turns out that there are no restrictions on $\psi$ besides the obvious ones, i.e. vanishing moments in 
the $x_1$-direction:

In Section \ref{sec:res} we show (among other things) the following theorem:

\begin{theorem}
Let $\psi$ be a Schwartz function with infinitely many vanishing moments in $x_1$-direction.
Let $f$ be a tempered distribution and $\mathcal{D}=\mathcal{D}_1\cup\mathcal{D}_2$, where
$\mathcal{D}_1 = \{(t_0,s_0)\in \R^2 \times [-1,1]:$ for $(s,t)$ in a neighbourhood $U$ of $(s_0,t_0)$, 
$|\mathcal{SH}_\psi f(a,s,t)| = O(a^k)$ for all $k\in \mathbb{N}$, with the implied constant uniform over $U\}$ and 
$\mathcal{D}_2 = \{(t_0,s_0)\in \R^2 \times (1,\infty]:$ for $(1/s,t)$ in a neighbourhood $U$ of $(s_0,t_0)$, 
$|\mathcal{SH}_{\psi^\nu} f(a,s,t)| = O(a^k)$ for all $k\in \mathbb{N}$, with the implied constant uniform over $U\}$.
Then 
$$
	\mbox{WF}(f)^c = \mathcal{D}.
$$
\end{theorem}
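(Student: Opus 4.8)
The plan is to reduce the general-$\psi$ statement to the special-$\psi$ statement (Theorem \ref{thm:kut}) by a two-sided comparison of shearlet coefficients, the bridge being a Radon-type transform adapted to the parabolic scaling. First I would record the easy inclusion: if $(t_0,s_0)\notin\mathrm{WF}(f)$, then near $t_0$ in the relevant cone $\hat f$ decays rapidly in a conical neighbourhood of the direction $s_0$; since $\psi$ is Schwartz with infinitely many vanishing moments in $x_1$, $\widehat{\psi_{ast}}$ is concentrated (in the usual wavelet sense) in the frequency wedge $|\xi_1|\sim a^{-1}$, $|\xi_2/\xi_1 - s|\lesssim a^{1/2}$, and a stationary-phase / integration-by-parts argument against the rapid frequency decay of $f$ yields $|\mathcal{SH}_\psi f(a,s,t)|=O(a^k)$ for all $k$, uniformly near $(s_0,t_0)$. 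This gives $\mathrm{WF}(f)^c\supseteq\mathcal D$ — wait, it gives $\mathcal D\supseteq\mathrm{WF}(f)^c$; the containment I have just sketched is that regular directions produce fast coefficient decay, i.e. $\mathrm{WF}(f)^c\subseteq\mathcal D$. The vanishing moments are exactly what is needed so that the conical wedge of $\widehat{\psi_{ast}}$, which does \emph{not} shrink to the $x_2$-axis the way the compactly-supported construction's does, still contributes nothing from the low-frequency region.

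For the reverse inclusion $\mathcal D\subseteq\mathrm{WF}(f)^c$ — equivalently $\mathrm{WF}(f)\subseteq\mathcal D^c$ — I would argue contrapositively: assume $|\mathcal{SH}_\psi f(a,s,t)|=O(a^k)$ for all $k$ uniformly on a neighbourhood $U$ of $(s_0,t_0)$ and deduce $(s_0,t_0)\notin\mathrm{WF}(f)$. Here the natural route is to compare $\mathcal{SH}_\psi$ with $\mathcal{SH}_{\psi^{\mathrm{KL}}}$ for a Kutyniok–Labate shearlet $\psi^{\mathrm{KL}}$, to which Theorem \ref{thm:kut} applies. The key identity is a reproducing / change-of-generator formula: writing $\mathcal{SH}_{\psi^{\mathrm{KL}}}f$ as a convolution (in the group variables) of $\mathcal{SH}_\psi f$ with a kernel determined by $\langle \psi^{\mathrm{KL}}_{a's't'},\psi_{ast}\rangle$. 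The point of the Radon-adapted analysis announced in the abstract is presumably that, after slicing along the shear orbits, this kernel is well-localized — it decays fast in scale ratio and in the shear- and translation-offsets — so fast decay of $\mathcal{SH}_\psi f$ on $U$ transfers to fast decay of $\mathcal{SH}_{\psi^{\mathrm{KL}}}f$ on a slightly smaller neighbourhood, and then Theorem \ref{thm:kut} closes the argument. The $\psi^\nu$ / cone-$\mathcal C^\nu$ half is handled by the symmetric statement under $\xi\mapsto(\xi_2,\xi_1)$, and the edge cases $s=\pm1$ (the seam between the two cones) are dealt with by noting both transforms control that direction.

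The main obstacle I expect is making the kernel estimate $|\langle \psi^{\mathrm{KL}}_{a's't'},\psi_{ast}\rangle|$ genuinely uniform and integrable against the Haar measure $a^{-3}\,da\,ds\,dt$ in the regime where $U$ is a \emph{small} neighbourhood but the scale ranges over all of $(0,1]$: one needs almost-orthogonality across scales (standard, from the vanishing moments of $\psi$ in $x_1$ matched against the smoothness of $\psi^{\mathrm{KL}}$) together with localization across shears and translations that survives the non-commutative group structure. The shear variable is the delicate one, because for large $|s|$ the wedges degenerate; restricting to the compact shear range $[-2,2]$ as in \eqref{eq:reprorig} is what tames this, and the Radon transform is the device that lets one integrate out the "long" direction of each shearlet and reduce the two-dimensional localization to a one-dimensional wavelet-type estimate. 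A secondary technical point is that $f$ is only a tempered distribution, so every pairing must be justified via the Schwartz-function decay of the analyzing elements and the duality in \eqref{eq:reprgroup}; this is routine once the function-class bookkeeping is set up, but it must be done carefully since infinitely many vanishing moments in $x_1$ (rather than full Schwartz-class frequency decay near the $x_1$-axis) is the only structural hypothesis we are allowed to use.
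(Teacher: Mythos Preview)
Your direct inclusion $\mbox{WF}(f)^c\subseteq\mathcal{D}$ matches the paper's Direct Theorem (Theorem~\ref{thm:direct}). For the reverse inclusion, however, you take a genuinely different route. You propose a transfer argument: express $\mathcal{SH}_{\psi^{\mathrm{KL}}}f$ as a group convolution of $\mathcal{SH}_\psi f$ against the cross-kernel $\langle\psi_{ast},\psi^{\mathrm{KL}}_{a's't'}\rangle$, establish almost-orthogonality of that kernel, and invoke Theorem~\ref{thm:kut}. The paper does \emph{not} reduce to Kutyniok--Labate; it proves an Inverse Theorem (Theorem~\ref{thm:inverse}) from scratch. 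It builds a tight frame for $L^2(\mathcal{C}_{u,v})^\vee$ out of $\psi$ itself (Section~\ref{sec:frame}), reconstructs a localized piece $\tilde f$ from that frame, takes the Radon transform $I(u)=\mathcal{R}\tilde f(u,s_0)$, and shows $I^{(N)}\in L^1$ by splitting the reconstruction integral in the shear variable: for $s$ near $s_0$ the assumed coefficient decay does the work, while for $s$ away from $s_0$ one recognizes $\mathcal{R}(\Phi\,\psi_{ast})(u,s_0)$ as $\mathcal{SH}_\theta(\Phi\,\delta_{x_1+s_0x_2-u})(a,s,t)$ and applies the Direct Theorem to the line distribution. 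Thus the Radon transform in the paper is not a tool for cross-kernel bounds but, via the Projection Slice Theorem, the direct bridge from coefficient decay to Fourier decay along the ray of slope $s_0$. Your approach, if the kernel estimate goes through, outsources the microlocal conclusion to an existing theorem; the paper's approach is self-contained, yields finite-order statements with explicit exponents (Theorem~6.1), and produces the frame theory of Section~\ref{sec:frame} as a by-product. One caution for your route: the group reproducing formula integrates over all $s\in\mathbb{R}$, but the hypothesis controls $\mathcal{SH}_\psi f$ only near $s_0$, so you must show the cross-kernel decays fast enough in $|s-s'|$ and $|t-t'|$ against the Haar weight $a^{-3}$ to suppress the far-field where $\mathcal{SH}_\psi f$ is merely polynomially bounded --- precisely the issue the paper sidesteps by working with its own frame on a compact shear range from the outset.
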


We also show an analogous result if $\psi$ has only finitely many vanishing moments in $x_1$-direction.

In addition, in Section \ref{sec:frame} we show that for essentially any shearlet a reproduction formula
similar to (\ref{eq:reprorig}) holds. To show this we shall use the notion of continuous frames.

These results provide a substantial generalization to the previous results in \cite{Labate2009}. In particular they allow for the first time 
to also use compactly supported functions $\psi$, for example tensor-product wavelets, in the analysis. 

We would like to remark that, while the methods of proof of the results \cite{Labate2009,candes2003} are quite similar to each other, our proofs follow different arguments.  
As a main tool we use a version of the Radon transform which is specifically adapted to the shear operation.

{\bf Outline.} The outline is as follows. In the next section, Section \ref{sec:wf}, we introduce the notion of $N$-Wavefront set. We also introduce the Radon 
transform which serves as an extremely convenient tool for our analysis. Then, in Section \ref{sec:wf} we show a
\emph{direct theorem} stating that $\mathcal{SH}_\psi f(a,s,t)$ has fast decay in $a$ if $f$ is smooth in 
$t$ and direction $s$. As already noted, in Section \ref{sec:frame} we derive several representation theorems for
$L^2$-functions based on the notion of continuous frames. In Section \ref{sec:inverse} we show an \emph{inverse theorem} stating that $f$ is smooth in $t$ and 
direction $s$ if $\mathcal{SH}_\psi f (a,s,t)$ has
fast decay in $a$. Section \ref{sec:res} summarizes these results and contains some theorems concerning the
resolution of the Wavefront Set.

\section{The Wavefront set}\label{sec:wf}

In this section we introduce and explain the notion of the Wavefront Set of a tempered distribution $f$ which has its roots in the analysis of the propagation of 
singularities of partial differential equations \cite{Hormander1983}, see also \cite{Toft2007} for a discussion of
various notions of Wavefront Sets. It is in some sense easier to observe directional 'microlocal' phenomena in the
Fourier domain and therefore the definition we give for the Wavefront Set is formulated in terms of the Fourier transform of a localized version of $f$. 
We shall however see in the part on the Radon transform that the so-called Projection Slice Theorem gives us a tool to study microlocal behavior of $f$ 
in the spacial domain -- by studying the (univariate) regularity of the Radon transform of a localized version of $f$.
\subsection{Definition}
We now give the definition of the Wavefront Set of a tempered distribution.

\begin{definition}
Let $N\in \R$ and $f$ tempered distribution on $\mathbb{R}^2$. We say that $x\in \mathbb{R}^2$ is an \emph{$N$-regular point} if there exists a neighbourhood 
$U_x$ of $x$ such that $\Phi \psi \in C^N$, where $\Phi$ is a smooth cutoff function with $\Phi\equiv 1$ on $U_x$. Furthermore, we call $(x,\lambda)$ an 
\emph{$N$-regular directed point} if there exists a neighbourhood $U_x$ of $x$, a smooth cutoff function $\Phi$ with $\Phi\equiv 1$ on $U_x$ and a neighbourhood 
$V_\lambda$ of $\lambda$ such that
\begin{equation}\label{eq:nwv}
	(\Phi f)^\land(\eta) = O\big((1 + |\eta|)^{-N}\big)\quad \mbox{for all }\eta =(\eta_1,\eta_2)\mbox{ such that }\frac{\eta_2}{\eta_1}\in V_\lambda.
\end{equation}
The \emph{$N$-Wavefront Set} $\mbox{WF}^N(f)$ is the complement of the set of $N$-regular directed points.
The \emph{Wavefront Set} $\mbox{WF}(f)$ is defined as $\bigcup_{N>0}\mbox{WF}^N(f)$.
\end{definition} 

It is not clear at first sight that the Wavefront Set according to the definition given above is well-defined, meaning that it is independent of
 the localization function $\Phi$. Below we prove that it actually is: To keep things simple we shall mostly restrict ourselves to $f\in L^2(\R^2)$.

\begin{lemma}[Localizing does not enlarge the Wavefront Set]\label{lem:loc}
Let $f\in L^2(\R^2)$ be a function such that 
$$
	\hat f (\eta) = O\big((1 + |\eta|)^{-N}\big)\quad \mbox{for all }\eta =(\eta_1,\eta_2)\mbox{ such that }\frac{\eta_2}{\eta_1}\in V_\lambda 
$$
where $V_\lambda$ is some open subset of $\R$ with $\lambda\in V_\lambda$. Let $\Phi$ be a smooth function. Then there exists an open neighbourhood $V'_\lambda$ of $\lambda \in \R$ such that
$$
	(\Phi f)^\land (\eta) = O\big((1 + |\eta|)^{-N}\big)\quad \mbox{for all }\eta =(\eta_1,\eta_2)\mbox{ such that }\frac{\eta_2}{\eta_1}\in 			V'_\lambda. 
$$
In other words: If $(x,\lambda)$ is an $N$-regular point of $f$, it is also an $N$-regular point of $\Phi f$.
\end{lemma}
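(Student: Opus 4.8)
The plan is to argue entirely on the Fourier side through the identity $(\Phi f)^\land=\hat\Phi\ast\hat f$. Since $\Phi$ is a smooth cutoff, $\hat\Phi$ is rapidly decreasing, i.e.\ $|\hat\Phi(\xi)|\le C_M(1+|\xi|)^{-M}$ for every $M\in\N$, and for $f\in L^2(\R^2)$ the convolution is well defined at every point by Cauchy--Schwarz. First I would fix a bounded open interval $V'_\lambda$ with $\lambda\in V'_\lambda$ and $\overline{V'_\lambda}\subseteq V_\lambda$, and then bound $(\Phi f)^\land(\eta)$ only for those $\eta$ with $\eta_1\neq0$, $\eta_2/\eta_1\in V'_\lambda$ and $|\eta|$ large. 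For such $\eta$ I split
\[
	(\Phi f)^\land(\eta)=\int_{\R^2}\hat\Phi(\eta-\zeta)\,\hat f(\zeta)\,d\zeta
\]
into the contribution of the ``good'' cone $G:=\{\zeta:\ \zeta_1\neq0,\ \zeta_2/\zeta_1\in V_\lambda\}$, where the hypothesis gives $|\hat f(\zeta)|\lesssim(1+|\zeta|)^{-N}$, and of the ``bad'' region $B:=\R^2\setminus G$, where only $\hat f\in L^2$ is at our disposal.

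On $G$ I would combine the rapid decay of $\hat\Phi$ with Peetre's inequality $(1+|\zeta|)^{-N}\le 2^{|N|}(1+|\eta|)^{-N}(1+|\eta-\zeta|)^{|N|}$ to obtain
\[
	\int_G|\hat\Phi(\eta-\zeta)|\,|\hat f(\zeta)|\,d\zeta\ \lesssim\ (1+|\eta|)^{-N}\int_{\R^2}(1+|\eta-\zeta|)^{|N|-M}\,d\zeta\ \lesssim\ (1+|\eta|)^{-N}
\]
as soon as $M>|N|+2$. The estimate on $B$ rests on a geometric observation: since $\overline{V'_\lambda}$ is a compact subset of the open set $V_\lambda$, there is $\delta>0$ such that for every $\eta$ with slope in $V'_\lambda$ and every nonzero $\zeta\in B$ the angle between $\eta$ and $\zeta$ is at least $\delta$, whence the law of cosines yields $|\eta-\zeta|\ge\sin\delta\cdot\max(|\eta|,|\zeta|)$. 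Consequently, on $\{\zeta\in B:\ |\zeta|\ge1\}$ one has $|\hat\Phi(\eta-\zeta)|\lesssim(1+|\eta|)^{-N}(1+|\zeta|)^{-3}$ for $M$ large enough, so the integral against $|\hat f|$ there is $\lesssim(1+|\eta|)^{-N}\|\hat f\|_2$ by Cauchy--Schwarz; on $\{\zeta\in B:\ |\zeta|<1\}$ one simply uses $|\hat\Phi(\eta-\zeta)|\lesssim(1+|\eta|)^{-M}$ (valid for $|\eta|\ge2$) together with $\int_{|\zeta|<1}|\hat f(\zeta)|\,d\zeta\lesssim\|\hat f\|_2$. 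Summing the three pieces gives $|(\Phi f)^\land(\eta)|\lesssim(1+|\eta|)^{-N}$ with a constant depending only on $N$, $\Phi$, $V_\lambda$, $V'_\lambda$ and $\|\hat f\|_2$, which is the claim.

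The step I expect to be the main obstacle is the estimate over $B$, and specifically the part of $B$ near the origin, where the slope $\zeta_2/\zeta_1$ is undefined or unbounded and the angular-separation argument is vacuous; there one has to rely instead on the $L^2$ bound for $\hat f$ and on $|\eta-\zeta|$ being comparable to the (large) quantity $|\eta|$. A second, smaller technical point is the choice of $V'_\lambda$: it must be shrunk so that the planar cone it generates stays at positive angular distance from the complementary cone \emph{and} from the vertical axis, which is exactly the role of compactness of $\overline{V'_\lambda}\subset V_\lambda$. Everything else --- splitting the convolution, Peetre's inequality, and integrating polynomial decay --- is routine.
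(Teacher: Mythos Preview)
Your proof is correct, but it takes a different decomposition from the paper's. The paper writes $\eta=te_0$ with $e_0$ a unit vector of slope $\lambda_0\in V_\lambda$, changes variables to $\xi=\eta-\zeta$, and splits the convolution integral according to whether $|\xi|<\delta t$ or $|\xi|>\delta t$ for a small $\delta>0$. The point is that the ball $te_0+tB_\delta$ lies entirely in the good cone (this is the paper's substitute for your compactness-of-$\overline{V'_\lambda}$ argument), so on the near piece the hypothesis $|\hat f|\lesssim(1+|\cdot|)^{-N}$ applies directly and the integral of $|\hat\Phi|$ gives the bound; on the far piece $|\xi|>\delta t$ one simply pairs $\hat f\in L^2$ against the tail $\int_{|\xi|>\delta t}|\hat\Phi|^2$ via Cauchy--Schwarz, and the rapid decay of $\hat\Phi$ makes this $O(t^{-N})$.

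Compared to your good-cone/bad-cone split, the paper's near/far split is shorter: it avoids Peetre's inequality and the angular-separation lemma entirely, and it never needs to treat the region $|\zeta|<1$ separately (since that region is automatically in the far piece once $t$ is large). Your approach, on the other hand, is the more standard microlocal decomposition and makes the role of the conical geometry more explicit; it also generalizes more readily to situations where the ``good'' set is not exactly a cone. Either way the essential input is the same: rapid decay of $\hat\Phi$ to trade directions for decay, plus $\hat f\in L^2$ to control the part of frequency space where nothing is known about $\hat f$.
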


\begin{proof}
Let $\lambda_0\in V_\lambda$ and $e_0$ a unit vector in $\R^2$ with slope $\lambda_0$. We want to show that
$$
	(\Phi f)^\land (te_0) = O(|t|^{-N}).
$$
Let us compute 
\begin{equation}\label{eq:con}
	(\Phi f)^\land (te_0) =\hat \Phi \ast \hat f (te_0) =\int_{\R^2}\hat f (te_0 - \xi)\hat \Phi(\xi)d\xi. 
\end{equation}
Since $\lambda_0\in V_\lambda$ and $V_\lambda$ is open, there exists $1>\delta >0$ such that $te_0 + tB_\delta$ is still contained in the cone of all points $\xi $ with $\xi_2/\xi_1 \in V_\lambda$ for all $t\in \R$ and $B_\delta$ the ball with radius $\delta$ around the origin.
Now we split the integral (\ref{eq:con}) into
$$
	(\ref{eq:con}) = \underbrace{\int_{|\xi| < \delta t}\hat f (te_0 - \xi)\hat \Phi(\xi)d\xi}_{A} + \underbrace{\int_{|\xi| > \delta t}\hat f 	(te_0 - \xi)\hat \Phi(\xi)d\xi}_{B}.
$$
By assumption we know that for $|\xi|<\delta t$ we can estimate
$$
	|\hat f (te_0 - \xi)| = O(|te_0 -\xi|^{-N}) = O(|t|^{-N}).
$$ 
Therefore it is easy to see that $|A| = O(|t|^{-N})$.
To estimate $B$ we employ Cauchy-Schwarz and again the smoothness of $\Phi$:
\begin{eqnarray*}
	|B| &\le &  \int_{|\xi| > \delta t}|\hat f (te_0 - \xi)| |\hat \Phi(\xi)|d\xi\le
	\|f\|_2\int_{|\xi| > \delta t}|\hat \Phi(\xi)|^2d\xi \\
	&\le & C\|f\|_2\int_{|\xi| > \delta t}(1+|\xi|)^{-N-1}d\xi = O(|t|^{-N}).
\end{eqnarray*}
\end{proof}

\begin{remark}\label{rem:loc}
While the above lemma assumes that $f\in L^2(\R^2)$, the result is actually valid for any tempered distribution $f$. This can be shown as above by first assuming that $\hat f$ is a slowly growing function and using the H\"older inequality instead of Cauchy-Schwartz. The case of a general tempered distrubution $f$ is then handled by repeated partial integration.
\end{remark}

\subsection{The Radon transform}
We introduce the Radon transform \cite{Deans1983}. As we shall see later it will serve us as a valuable tool in the proofs of the later sections.

\begin{definition}
The Radon transform of a function $f$ is defined by
\begin{equation}
	\mathcal{R} f(u,s) := \int_{x_2\in \R} f( u-sx_2 , x_2) dx_2.
\end{equation}
\end{definition}

Observe that our definition of the Radon transform differs from the most common one which parametrizes the directions
in terms of the angle and not the slope as we do. It turns out that our definition is particularily well-adapted to
the mathematical structure of the shearlet transform.
The next theorem already indicates that the Radon transform provides a useful tool in studying microlocal phenomena.

\begin{theorem}[Projection Slice Theorem]
\begin{equation}
	(\mathcal{R}f(u,s))^\land (\omega ) = \hat f (\omega ( 1,s)).
\end{equation}
\end{theorem}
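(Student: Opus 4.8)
The plan is to prove the identity by a direct computation, first for $f$ in the Schwartz class $\mathcal{S}(\R^2)$ (the general case of a tempered distribution following by a standard duality/density argument), and to read the Fourier transform on the left-hand side as the one-dimensional transform in the variable $u$ with $s$ held fixed. Write $g_s(u) := \mathcal{R}f(u,s) = \int_\R f(u - s x_2, x_2)\,dx_2$; since $f$ is Schwartz this integral converges absolutely and defines a Schwartz function of $u$, so $\hat g_s(\omega) = \int_\R g_s(u) e^{-2\pi i u \omega}\,du$ is well-defined.

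The key steps are just two. First, substitute the definition of $g_s$ and apply Fubini's theorem to interchange the $u$- and $x_2$-integrations, which is legitimate because $(u,x_2)\mapsto f(u-sx_2,x_2)$ is absolutely integrable on $\R^2$ for Schwartz $f$:
\[
\hat g_s(\omega) = \int_\R \int_\R f(u - s x_2, x_2) e^{-2\pi i u \omega}\,du\,dx_2 .
\]
Second, for each fixed $x_2$ perform the linear change of variables $x_1 = u - s x_2$ (so $u = x_1 + s x_2$, $du = dx_1$), which turns the inner integral into
\[
\int_\R f(x_1, x_2)\, e^{-2\pi i (x_1 + s x_2)\omega}\,dx_1 ,
\]
and hence, after recombining,
\[
\hat g_s(\omega) = \int_{\R^2} f(x_1,x_2)\, e^{-2\pi i (x_1\omega + x_2 s\omega)}\,dx = \hat f(\omega,\, s\omega) = \hat f\big(\omega(1,s)\big),
\]
which is the claimed Projection Slice Theorem.

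I do not expect the computation to be the obstacle — it is a one-line application of Fubini and an affine substitution. The only point requiring care is the generality in which the statement is used later: for merely $L^2$ or distributional $f$ the Radon transform $\mathcal{R}f(\cdot,s)$ need not exist as an honest function, so the identity should then be understood weakly. I would handle this by testing both sides against a Schwartz function (in $u$ on the left, correspondingly in $\omega$ on the right) and reducing to the Schwartz case via the same Fubini argument, invoking density of $\mathcal{S}(\R^2)$ and continuity of the relevant maps; within the present excerpt it suffices to record the formal computation above and remark that it extends by this routine limiting procedure.
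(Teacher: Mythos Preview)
Your proof is correct and follows exactly the same route as the paper: insert the definition, swap the order of integration, and make the affine substitution $x_1 = u - s x_2$. The paper's own proof is just this two-line formal computation without the Schwartz/density justification you supply, so your version is, if anything, slightly more careful.
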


For the convenience of the reader and because the proof is so short we show how this can be proved:

\begin{proof}
\begin{eqnarray*}
	(\mathcal{R}f(u,s))^\land (\omega ) &=& \int_\R\int_\R f(u-sx_2 , x_2) e^{-2\pi i u\omega}dx_2 du \\
	&=&\int_\R\int_\R f(\tilde u,x_2)e^{-2\pi i (\tilde u + sx_2)\omega}dx_2d\tilde u=\hat f(\omega(1,s)).
\end{eqnarray*}
\end{proof}

By the Projection Slice Theorem, another way of stating that $(x,\lambda)$ is an $N$-regular directed point is that
$$
	(\mathcal{R}\Phi f(u,s))^\land (\omega ) = O(|\omega |^{-N}) \quad \mbox{and }s \in V_\lambda, 
$$
or in other words, that 
$\mathcal{R}\Phi f(u,s)$ is sufficiently smooth in $u$ around $s= \lambda$.

Since wavelets can deal perfectly well with univariate functions it is a natural idea to perform a wavelet analysis
on the Radon transform of a bivariate function in order to study directional properties. This idea has been implemented
in the ridgelet transform \cite{Candes1999a} and has led to the construction of curvelets and shearlets as we now briefly explain: 

The original ridgelet transform first partitions the function  $f$ to be studied into parts which
are located in small but fixed spacial rectangles and then represents the Radon transform of each of these parts via a wavelet transform. It can be shown that
any $f$ with only directional singularities along a curve with small curvature can be effectively represented in this way. However, 
if there are no restrictions posed on the shape of the singular set of $f$, a multiscale approach is required which leads to multiscale
ridgelets \cite{Candes1999b}. It turns out that the whole set of multiscale ridgelets is too redundant and not frameable. A solution to
this problem was given with the 'first generation curvelet transform' which first applies a bandpass filter to $f$ and then analyzes the
the frequency band $\sim 2^j$ with multiscale ridgelets of scale $\sim 2^{j/2}$ -- corresponding to the parabolic scaling relation. 
Realizing that the crucial thing is the relation (\ref{eq:scalrel}) a much simpler curvelet construction has been given in \cite{candes2002} and this construction is what is now usually referred to as 'curvelets'. 
	
\section{A direct theorem}\label{sec:direct}

In this section we show that for an $N$-regular directed point $(t_0,s_0)$ of a function $f\in L^2(\R^2)$ the
shearlet coefficients with respect to any function with sufficiently many vanishing moments in the $x_1$-direction decay quickly around $t=t_0,\ s=s_0$. This fact is especially important for applications where sparse representations of objects with edges are desired \cite{candes2002,guo2007}.  We assume here that $f$ is square integrable since for general tempered distributions we would have to require $\psi$ to be a Schwartz function. The extension to general tempered distributions is however straightforward, see Remark \ref{rem:loc}.

\begin{theorem}[Direct Theorem]\label{thm:direct}
Assume that $f$ is an $L^2(\R^2)$-function and that $(t_0,s_0)$ is an $N$-regular directed point of $f$. 
Let $\psi \in H_{(0,L)}(\R^2),\ \hat \psi \in L^1(\R^2)$ be a shearlet with $M$ moments which satisfies a decay estimate of the form
\begin{equation}\label{eq:decay2}
	\psi(x) = O\big((1 + |x|)^{-P}\big).
\end{equation} 
Then there exists a neighbourhood $U(t_0)$ of $t_0$ and $V(s_0)$ of $s$ such that for any $1/2<\alpha < 1$, $t\in U(t_0) $
and $s\in V(s_0)$ we have the decay estimate
\begin{equation}\label{eq:shdecay}
	\mathcal{SH}_\psi f (a,s,t) = O\big(a^{-3/4 + P/2}  + a^{(1-\alpha)M} + a^{-3/4 + \alpha N} + a^{(\alpha - 1/2)L}\big).
\end{equation}
\end{theorem}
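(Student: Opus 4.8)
The plan is to work on the Radon transform side via the Projection Slice Theorem, so that the two-dimensional shearlet coefficient becomes a one-dimensional pairing that can be controlled by the vanishing moments of $\psi$ in the $x_1$-direction together with the decay of $\hat\psi$. First I would write $\mathcal{SH}_\psi f(a,s,t)=\langle f,\psi_{ast}\rangle$ and, using the Projection Slice Theorem together with the explicit form of $\psi_{ast}$ under the shear $S_s$ and the anisotropic dilation $A_a$, reduce the pairing to an integral over $u$ of $\mathcal{R}(\Phi f)(u,s)$ against a rescaled copy of $\mathcal{R}\psi$; here $\Phi$ is the cutoff coming from the definition of the $N$-regular directed point $(t_0,s_0)$, which I may insert since for $t$ in a small neighbourhood $U(t_0)$ the support considerations let me replace $f$ by $\Phi f$ up to an error that is $O(a^{-3/4+P/2})$ — this is where the spatial decay exponent $P$ in (\ref{eq:decay2}) enters, through the tails of $\psi_{ast}$ lying outside the region where $\Phi\equiv 1$.

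Next I would split the one-dimensional analysis into a ``low frequency'' and a ``high frequency'' regime in the Fourier variable $\omega$ dual to $u$, with the cutoff placed at $|\omega|\sim a^{-\alpha}$ for the free parameter $1/2<\alpha<1$. On the high-frequency part $|\omega|\gtrsim a^{-\alpha}$ I would use that $(t_0,s_0)$ is $N$-regular, i.e. $(\mathcal{R}\Phi f(\cdot,s))^\land(\omega)=O(|\omega|^{-N})$ for $s$ in a neighbourhood $V(s_0)$ (this uses Lemma \ref{lem:loc} to guarantee $\Phi f$ inherits the regularity), pair it against the Fourier transform of the rescaled $\mathcal{R}\psi$, and integrate; the scaling $a^{-3/4}$ from the normalization combined with $|\omega|^{-N}$ integrated over $|\omega|\gtrsim a^{-\alpha}$ produces the term $a^{-3/4+\alpha N}$, while the contribution of the smoothness $\psi\in H_{(0,L)}$ (i.e. $L$ derivatives in $x_2$, which on the Radon side becomes decay of the transform in its frequency variable) against the same frequency split gives the complementary term $a^{(\alpha-1/2)L}$. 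On the low-frequency part $|\omega|\lesssim a^{-\alpha}$ I would instead exploit the $M$ vanishing moments of $\psi$ in the $x_1$-direction: after the dilation the relevant factor behaves like $(a\omega)^{M}$ near $\omega=0$, so integrating $|a\omega|^{M}$ against a bounded piece of $\widehat{\mathcal{R}\Phi f}$ over $|\omega|\lesssim a^{-\alpha}$ yields $a^{M}\cdot a^{-\alpha(M+1)}\sim a^{(1-\alpha)M}$ (up to the harmless shift absorbed into the exponent), accounting for the term $a^{(1-\alpha)M}$.

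Collecting the four contributions — the spatial-tail error $a^{-3/4+P/2}$, the vanishing-moment term $a^{(1-\alpha)M}$, the $N$-regularity term $a^{-3/4+\alpha N}$, and the $H_{(0,L)}$-smoothness term $a^{(\alpha-1/2)L}$ — gives exactly the claimed estimate (\ref{eq:shdecay}), uniformly over $t\in U(t_0)$ and $s\in V(s_0)$. I expect the main obstacle to be the bookkeeping in the first reduction step: carefully tracking how the shear and anisotropic dilation act inside the Radon transform (so that the Projection Slice Theorem applies cleanly and the shear parameter $s$ survives as the Radon-direction variable rather than getting entangled with the dilation), and making precise the localization estimate that bounds the difference $\langle f,\psi_{ast}\rangle-\langle\Phi f,\psi_{ast}\rangle$ by $O(a^{-3/4+P/2})$ using only the polynomial decay (\ref{eq:decay2}) and $\hat\psi\in L^1$. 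Once that reduction is in place, the three remaining terms are routine one-dimensional frequency-splitting estimates of the kind familiar from wavelet regularity theory.
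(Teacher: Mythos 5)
Your overall architecture matches the shape of the correct proof — localization producing the $a^{-3/4+P/2}$ tail, a frequency split at scale $a^{-\alpha}$, vanishing moments on low frequencies, $N$-regularity on high frequencies in the good directions, and $x_2$-smoothness elsewhere — and each of the four terms in (\ref{eq:shdecay}) is attached to the correct hypothesis. However, the step on which you hang everything, namely rewriting $\langle \Phi f,\psi_{ast}\rangle$ as a one-dimensional integral over $u$ of $\mathcal{R}(\Phi f)(u,s)$ against a rescaled copy of $\mathcal{R}\psi$ at the \emph{single} direction $s$, is not valid. An identity of the form $\langle f,g\rangle=\int_\R \mathcal{R}f(u,s)\,\overline{h(u)}\,du$ forces $g$ to be a ridge function $g(x)=h(x_1+sx_2)$, equivalently forces $\hat g$ to be concentrated on the line $\{\omega(1,s):\omega\in\R\}$; but $\hat\psi_{ast}(\xi)=a^{3/4}e^{-2\pi i t\xi}\hat\psi\big(a\xi_1,a^{1/2}(\xi_2-s\xi_1)\big)$ occupies a genuinely two-dimensional wedge. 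If you insist on the Radon picture you must integrate over \emph{all} slicing directions, $\langle\Phi f,\psi_{ast}\rangle=\int\int(\mathcal{R}\Phi f(\cdot,s'))^\land(\omega)\,\overline{\hat\psi_{ast}(\omega(1,s'))}\,|\omega|\,d\omega\,ds'$, and then a \emph{second}, angular split becomes unavoidable: for $|s'-s|\lesssim a^{\alpha-1/2}$ (and $|\omega|\gtrsim a^{-\alpha}$, $s$ close to $s_0$) one stays inside the cone where $N$-regularity applies and gets $a^{-3/4+\alpha N}$, whereas for $|s'-s|\gtrsim a^{\alpha-1/2}$ one must use that $\hat\psi(\cdot,\xi_2)=O(|\xi_2|^{-L})$ with $\xi_2-s\xi_1=\omega(s'-s)$ to produce $a^{(\alpha-1/2)L}$. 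Your sketch omits this angular decomposition entirely and attributes the $L$-term to ``the same frequency split'' at $|\omega|\sim a^{-\alpha}$, which is not where it comes from; this is exactly the part of the argument (the paper's split of $B$ into $B_1$ and $B_2$ at $|\tilde\xi_2|\sim\varepsilon' a^{1/2-\alpha}$, carried out directly in the two-dimensional Fourier domain) that your reduction hides.

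A smaller but real issue: your low-frequency computation $a^{M}\int_{|\omega|<a^{-\alpha}}|\omega|^{M}\,d\omega\sim a^{(1-\alpha)M-\alpha}$ does not give $a^{(1-\alpha)M}$; the extra factor $a^{-\alpha}$ is a genuine loss as $a\to 0$, not a ``harmless shift.'' The correct route is to write $\hat\psi(\xi)=\xi_1^{M}\hat\theta(\xi)$, pull out $\sup_{|\xi_1|<a^{-\alpha}}|a\xi_1|^{M}\le a^{(1-\alpha)M}$, and bound the remaining integral by Cauchy--Schwarz as $\langle|\hat f|,|\hat\theta_{ast}|\rangle\le\|f\|_2\|\theta\|_2$, exploiting that the $L^2$-normalization of $\theta_{ast}$ is independent of $a$. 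Some such normalization-preserving estimate, together with careful tracking of the Jacobian $|\omega|$ and the $a^{\pm 3/4}$ factors, is precisely the ``bookkeeping'' you defer — and it is where the proof actually lives.
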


\begin{proof}
First we show that we can without loss of generality assume that $f$ is already localized around $t_0$, i.e. 
$f=\Phi f$ where $\Phi$ is the cutoff function from the definition of the $N$-wavefront set which equals $1$ around $t_0$.
To show this we prove that
\begin{equation}\label{eq:loc}
	\langle (1-\Phi)f , \psi_{ast}\rangle = O(a^{-3/4 + P/2}).
\end{equation} 
By definition we have
\begin{equation}\label{eq:ast}
	\psi_{ast}(x_1,x_2) = a^{-3/4}\psi\big(\frac{(x_1 - t_1) + s(x_2 - t_2)}{a}, \frac{x_2 - t_2}{a^{1/2}}\big).
\end{equation}
Now we note that in computing the inner product (\ref{eq:loc}) we can assume that $|x - t| > \delta$ for 
some $\delta >0$ and $t$ in a small neighbourhood $U(t_0)$ of $t_0$ since $(1- \Phi ) f = 0$ around $t_0$. By (\ref{eq:decay2}) we estimate
\begin{eqnarray*}
	|\psi_{ast}(x)|&\le & Ca^{-3/4}(1+|\left(\begin{array}{cc}a^{-1}&sa^{-1}\\0 & a^{-1/2}\end{array}\right)(x-t)|)^{-P} \\
	& \le & Ca^{-3/4}(1 + \|\left(\begin{array}{cc}a&-sa^{1/2}\\0 & a^{1/2}\end{array}\right)\|^{-1}|x-t|)^{-P}\\
	&\le&
	Ca^{-3/4}(1 + C(s)a^{-1/2}|x-t|)^{-P} = O(a^{-3/4+P/2}|x-t|^{-P})
\end{eqnarray*}
for $|x-t|>\delta$
and $C(s) = (1 + \frac{s^2}{2}+(s^2 + \frac{s^2}{4})^{1/2})^{1/2}$ (compare \cite[Lemma 5.2]{Labate2009}).
We can now estimate
\begin{eqnarray}
	\langle (1-\Phi)f , \psi_{ast}\rangle &\le & Ca^{-3/4 +P/2}\int_{|x-t|\geq \delta}|x-t|^{-P}|1-\Phi(x_1,x_2)||f(x_1,x_2)|dx_1 dx_2 	\nonumber\\
	&=& O(a^{-3/4 + P/2})
	\label{eq:1}
\end{eqnarray}
for $t\in U(t_0)$ and this is (\ref{eq:loc}).
Now, assuming that $f=\Phi f$ is localized, we go on to estimate the shearlet coefficients $|\langle f , \psi_{ast}\rangle| $. First
note that the Fourier transform of $\psi_{ast}$ is given by 
\begin{equation}\label{eq:pastft}
	\hat\psi_{ast}(\xi) = a^{3/4}e^{-2\pi i t \xi}\hat \psi \big( a\xi_1 , a^{1/2}(\xi_2 - s\xi_1)\big).
\end{equation}
Now pick $\frac12 < \alpha < 1$ and write
\begin{eqnarray}
	|\langle f , \psi_{ast}\rangle| &=& |\langle \hat f , \hat \psi_{ast}\rangle|\le a^{3/4}\int_{\R^2} |\hat f (\xi_1,\xi_2)| |\hat \psi \big( 	a\xi_1 , a^{1/2}(\xi_2 - s\xi_1)\big)|d\xi \nonumber\\
	&=&\underbrace{a^{3/4}\int_{|\xi_1| < a^{-\alpha}}}_{A}+\underbrace{a^{3/4}\int_{|\xi_1| > a^{-\alpha}}}_{B}.
\end{eqnarray}

Since $\psi$ possesses $M$ moments in the $x_1$ direction which means that $\hat \psi (\xi_1 , \xi_2) = \xi_1^M \hat \theta (\xi_1,\xi_2)$
with some $\theta\in L^2(\R^2)$, we can estimate $A$ as
\begin{eqnarray}
	A&=&a^{3/4}\int_{|\xi_1|< a^{-\alpha}} |\hat f (\xi_1,\xi_2)| |\hat \psi \big( a\xi_1 , a^{1/2}(\xi_2 - s\xi_1)\big)|d\xi\nonumber \\
	&=& a^{3/4}\int_{|\xi_1|< a^{-\alpha}} a^M|\xi_1|^M|\hat f (\xi_1,\xi_2)| |\hat \theta \big( a\xi_1 , a^{1/2}(\xi_2 - s\xi_1)\big)|d\xi\nonumber\\
	&\le& a^{M(1-\alpha)}a^{3/4}\int_{|\xi_1|< a^{-\alpha}} |\hat f (\xi_1,\xi_2)| |\hat \theta \big( a\xi_1 , a^{1/2}(\xi_2 - s\xi_1)\big)|d\xi \nonumber\\\label{eq:estA}
	&\le&  a^{(1-\alpha)M}\langle |\hat f| ,|\hat \theta_{ast} |\rangle \le a^{(1-\alpha)M}\|\hat f\|_2 \|\hat \theta_{ast}\|_2 =a^{(1-\alpha)M}\|f\|_2\|\theta\|_2.
\end{eqnarray}
In order to estimate $B$ we make the following substitution:
\begin{equation*}
	\left(
	\begin{array}{cc}
		a & 0\\
		-a^{1/2}s & a^{1/2}
	\end{array}\right)
	\left(\begin{array}{c}
		\xi_1\\
		\xi_2\\
	\end{array}\right) =
	\left(\begin{array}{c}
		\tilde\xi_1\\
		\tilde\xi_2\\
	\end{array}\right),
	\quad
	d\xi_1 d\xi_2 = a^{-3/2}d\tilde\xi_1 d\tilde\xi_2.
\end{equation*}
Then 
\begin{equation}\label{eq:subst}
	B = a^{-3/4}\int_{\frac{|\tilde \xi_1|}{a} > a^{-\alpha}} |\hat f \big(\frac{\tilde \xi_1}{a} , \frac{s}{a}\tilde \xi_1 + a^{-1/2}\tilde 			\xi_2\big)|
	|\hat \psi \big(\tilde \xi_1 , \tilde \xi_2\big)|d\xi.
\end{equation}
Now we shall use that $(t_0,s_0)$ is a regular directed point of $f$. This means that there is a neighbourhood $(s_0 - \varepsilon , s_0 + \varepsilon)$ such that
\begin{equation}\label{eq:wff}
	\hat f (\eta_1 , \eta_2) \le C (1 + |\eta|)^{-N} \quad \mbox{for all }\frac{\eta_2}{\eta_1} \in (s_0 - \varepsilon , s_0 + \varepsilon).
\end{equation}
Looking at (\ref{eq:subst}) we now consider $\frac{\eta_2}{\eta_1}$ with $\eta_1 := \frac{\tilde \xi_1}{a}$, 
$\eta_2:=\frac{s}{a}\tilde \xi_1 + a^{-1/2}\tilde \xi_2$ and $\frac{\tilde\xi_1}{a} > a^{-\alpha}$ and get the estimate
\begin{equation}\label{eq:eta}
s - a^{\alpha - 1/2}\tilde \xi_2\le	\frac{\eta_2}{\eta_1} = s + a^{-1/2}\tilde \xi_2 \frac{a}{\tilde \xi_1} \le s + a^{\alpha - 1/2}\tilde \xi_2.
\end{equation}
By (\ref{eq:wff}) we have that 
\begin{equation}\label{eq:eta1}
	|\hat f \big(\frac{\tilde \xi_1}{a} , \frac{s}{a}\tilde \xi_1 + a^{-1/2}\tilde \xi_2\big)|\le C\big( 1 + \frac{|\tilde\xi_1|}{a}\big)^{-N}
\end{equation}
for $s$ in a neighbourhood $V(s_0)$ of $s_0$, $\frac{|\tilde \xi_1|}{a}>a^{-\alpha}$ and $|\tilde \xi_2| < \varepsilon' a^{1/2-\alpha}$ for 
some $\varepsilon'< \varepsilon$.
Now we first split the integral $B$ according to
\begin{eqnarray}
	B & = & a^{-3/4}\int_{|\tilde\xi_1|/a \geq a^{-\alpha}}|\hat f(\tilde \xi_1 / a , \frac{s}{a}\tilde \xi_1 + a^{-1/2}\tilde \xi_2)|| \hat \psi(\tilde\xi_1,\tilde\xi_2)|
	d\tilde\xi_1 d\tilde\xi_2 \nonumber \\
	&=& \underbrace{a^{-3/4}\int_{|\tilde\xi_1|/a \geq a^{-\alpha}, \ |\tilde\xi_2| < \varepsilon' a^{1/2-\alpha}}}_{B_1}
	+ \underbrace{a^{-3/4}\int_{|\tilde\xi_1|/a \geq a^{-\alpha}\ |\tilde\xi_2| > \varepsilon' a^{1/2-\alpha}}}_{B_2}
\end{eqnarray}
By (\ref{eq:eta1}) we can estimate $B_1$ according to
\begin{equation}\label{eq:est2}
	B_1 \le Ca^{\alpha N -3/4}\|\hat\psi\|_1
\end{equation}
It only remains to estimate $B_2$. For this we will use the fact that $\frac{\partial^L}{\partial x_2^L}\psi\in L^2(\R^2)$. This implies that
\begin{eqnarray}
	B_2 & \le & a^{-3/4}\int_{|\tilde\xi_1|/a \geq a^{-\alpha}\ |\tilde\xi_2| > \varepsilon' a^{1/2-\alpha}}|\hat f(\tilde \xi_1 / a , 
	\frac{s}{a}\tilde \xi_1 + a^{-1/2}\tilde \xi_2) \hat \psi(\tilde\xi_1,\tilde\xi_2)|
	d\tilde\xi_1 d\tilde\xi_2 \nonumber \\
	&=& a^{-3/4}\int_{|\tilde\xi_1|/a \geq a^{-\alpha}\ |\tilde\xi_2| > \varepsilon' a^{1/2-\alpha}}|\hat f(\tilde \xi_1 / a , 
	\frac{s}{a}\tilde \xi_1 + a^{-1/2}\tilde \xi_2) \tilde\xi_2^{-L}\big(\frac{\partial^L}{\partial 						x_2^L}\psi\big)^\land(\tilde\xi_1,\tilde\xi_2)|
	d\tilde\xi_1 d\tilde\xi_2 \nonumber  \\
	&\le & (\varepsilon')^{-L} a^{-3/4 + (\alpha - 1/2)L} \int_{\R^2}|\hat f(\tilde \xi_1 / a , 
	\frac{s}{a}\tilde \xi_1 + a^{-1/2}\tilde \xi_2)| |\big(\frac{\partial^L}{\partial 	x_2^L}\psi\big)^\land(\tilde\xi_1,\tilde\xi_2)|
	d\tilde\xi_1 d\tilde\xi_2\nonumber \\
	\label{eq:est3}
 	& = & (\varepsilon')^{-L} a^{(\alpha - 1/2)L}|\langle |\hat f| ,|( \frac{\partial^L}{\partial x_2^L}\psi_{ast})^\land|\rangle| \le  (\varepsilon')^{-L}
	a^{ (\alpha - 1/2)L}\|f\|_2 \|\frac{\partial^L}{\partial x_2^L}\psi\|_2.
\end{eqnarray}
Putting together the estimates (\ref{eq:1}), (\ref{eq:estA}), (\ref{eq:est2}) and (\ref{eq:est3}) we finally arrive at the desired conclusion.
\end{proof}

\section{Frames}\label{sec:frame}
The goal of this section is to extend the range of validity for the representation formula (\ref{eq:reprorig}) allowing general functions with vanishing moments. This
is important in order to enlarge the scope of potentially useful shearlets. These results will also serve as a tool to prove the main theorem in Section
\ref{sec:inverse}.
Note that the formula (\ref{eq:reprgroup}) is valid for any shearlet but this representation comes with some disadvantages such as dependence on the 
choice of coordinate axes. 
In what follows we present three different generalizations of (\ref{eq:reprorig}), each one with different advantages and disadvantages. 
In Theorem \ref{thm:frame1} we find a representation formula
which is valid for any choice of a window function $W$ but this representation is not \emph{tight} (to be defined later) and it requires to project the data $f$ onto a frequency cone prior to the analysis. Next, in Theorem \ref{thm:frame2} we eliminate the need to perform this projection at the cost of possibly worse frame constants. Finally, by carefully choosing the window function $W$ we show in Theorem \ref{thm:frame3} that we can actually wind up with tight frames if we perform a frequency projection onto a cone prior to the analysis.
First some definitions:
We utilize the concept of \emph{continuous frames} which has been introduced in \cite{Ali1993}. 

\begin{definition}Let $(X,\mu)$ be a measure space with Radon measure $\mu$ and $\Phi=(\varphi_x)_{x\in X}$ a family of elements in some Hilbert space
$\mathcal{H}$ indexed by $X$. $\Phi$ is called a \emph{frame} in $\mathcal{H}$ if there exist constants $A,B>0$ such that
\begin{equation}\label{eq:frame}
	A\|f\|_2^2 \le \int_{X}|\langle f,\varphi_x\rangle|^2 d\mu(x) \le B\|f\|_2^2\quad \mbox{for all }f\in \mathcal{H}.
\end{equation}
A frame is called \emph{tight} if $A=B$.
\end{definition}

If $\Phi$ is a frame for $\mathcal{H}$, we call the operator $\mathcal{T} : \mathcal{H} \to L^2(X,\mu), f \mapsto (\langle f , \varphi_x\rangle )_{x\in X}$ the \emph{analysis operator} and its adjoint $\mathcal{T}^*$ the \emph{synthesis operator}. The frame condition (\ref{eq:frame})
assures the boundedness and lower-boundedness of the \emph{frame operator} $\mathcal{S}: \mathcal{H}\to \mathcal{H}, f\mapsto \mathcal{T}^*\mathcal{T}f:= \int_{X}\langle f, \varphi_x\rangle \varphi_x d\mu(x)$.  
Call the family $\tilde \Phi=(\tilde \varphi_x)_{x\in X}$ with $\tilde \varphi_x := \mathcal{S}^{-1}\varphi_x$ the \emph{canonical dual frame} of $\Phi$. Then it is not hard to see that

\begin{itemize}
	\item[(i)] $\tilde \Phi$ is a frame with frame constants $\frac1B , \frac1A$, and
	\item[(ii)] We have the reproducing formula 
	\begin{equation}\label{eq:rep}
		f = \int_{x\in X}\langle f , \tilde \varphi_x\rangle \varphi_x d\mu(x) = \int_{x\in X}\langle f , \varphi_x\rangle\tilde \varphi_x 							d\mu(x), 
	\end{equation}
	where the equality holds at least in a weak sense.
	\item[(iii)]For tight frames with frame constant $A$ the dual frame elements are given by $\tilde\varphi_x=\frac{1}{A}\varphi_x$. 
\end{itemize} 
The 'quality criterion' for frames in terms of efficiency and accuracy of the inversion of the frame operator is the ratio $B/A$, so in general it is desirable for a frame
to be as close to tight as possible (such frames are also sometimes called 'snug frames'). 
An example of a frame for the Hilbert space $L^2(\R^2)$ would be the family $$\Psi=(\psi_{ast})_{(a,s,t)\in \R_+\times \R \times \R^2}$$ where
the measure $\mu$ has Lebesque density $a^{-3}$ and $\psi$ is a shearlet. In this case the frame constants are equal:
$A = B = C_\psi$ and the canonical dual frame $\tilde \Psi$ equals $\Psi$. 
The drawback of the representation of a function $f\in L^2(\R^2)$ arising from this frame is that the directions indexed by $s$ are not equally distributed as $s$ grows large. The goal of this section is therefore to restrict the parameters $a,s$ to a compact interval.
If the parameter $a$ is restricted to a compact interval this means that low frequency phenomena will not be representable. Similarly, if 
$s$ is restricted to a compact interval, this means that not all directions of singularities will be representable. We therefore shrink the Hilbert space $L^2(\R^2)$ to the smaller space 
$$
	L^2(\C_{u,v})^\lor:= \{ f\in L^2(\R^2):\ \mbox{supp }\hat f\subseteq \C_{u,v}\},
$$
where
$$ 
	\C_{u,v}:=\{\xi \in \R^2 : |\xi_1| \geq u ,\ |\xi_2 |\le v|\xi_1|\}.
$$
We shall always require $u,v>0$.
Once we succeed in constructing shearlet frames for the Hilbert space $L^2(\C_{1,1})^\lor$, we are able to represent any signal $f\in L^2(\R^2)$ as follows:
Let 
$$
	\C^\nu_{u,v} := \{\xi \in \R^2 : |\xi_2| \geq u ,\ |\xi_1 |\le v|\xi_2|\}.
$$
Denote by $P_{\C_{u,v}}$ the orthogonal projection onto the space $L^2(\C_{u,v})^\lor$ and by $P_{\C^\nu_{u,v}}$ the orthogonal projection
onto the space $L^2(C_{u,v}^\nu)^\lor$.
We write $P_\C:= P_{\C_{1,1}}$, $P_{\C^\nu}:=P_{\C_{1,1}^\nu}$ and $P_{\D}$ for the orthogonal projections onto the spaces
$L^2(\C_{1,1})^\lor$, $L^2(\C_{1,1}^\nu)^\lor$ and $L^2([-1,1]^2)^\lor$, respectively.
Then

\begin{itemize}
	\item $P_\C f$ is analyzed using the shearlet $\psi$,
	\item $P_{\C^\nu}f$ is analyzed using the shearlet $\psi^\nu(x_1,x_2):= \psi(x_2 , x_1)$, and
	\item $P_{\D}f$ is analyzed with some low pass window.
\end{itemize}

In view of detecting singularities, the function $P_{\D}f$ is not interesting since it is analytic. 
We therefore restrict our attention to the detection of directional features of the function
$P_\C f$ which we will henceforth simply denote by $f$. For the case of $P_{\C^\nu}f$ the analysis works analogous by reversing
the variables.

We will now analyze the structure of the frame operator on $L^2(\C_{u,v})^\lor$ related to a system $\Psi = (P_{\C_{u,v}}\psi_{ast})_{a\in [0,\Gamma],\ s\in [-\Xi , \Xi],\ t\in \R^2}$ and the usual measure $\mu$.  For notational simplicity we shall simply write
$\psi_{ast}$ for the functions $P_{\C_{u,v}}(\psi_{ast})$. In fact it makes little difference if we consider $\psi_{ast}$ since for any $f\in L^2(\C_{u,v})^\lor$ we have 
$$
	\langle f , \psi_{ast}\rangle = \langle P_{\C_{u,v}} f , \psi_{ast}\rangle = \langle f , P_{\C_{u,v}}\psi_{ast}\rangle.
$$
Due to the shift-invariance of the system $\Psi$, the frame operator possesses a particularly simple structure:

\begin{lemma}\label{lem:mult}
The frame operator $\mathcal{S}$ associated with the system $\Psi$ is a Fourier multiplier with the function
$$
	\Delta_{u,v}(\psi)(\xi):=\chi_{\C_{u,v}}(\xi)\int_{0< a < \Gamma ,\  |s|< \Xi} |\hat \psi \big(a\xi_1, \sqrt{a}(\xi_2 - s\xi_1)\big)|^2 a^{-3/2}da ds,
$$
$\chi_{\C_{u,v}}$ denoting the characteristic function of $\C_{u,v}$.
In particular, $\Psi$ is a frame for $L^2(\C_{u,v})^\lor$ if and only if there exist constants $A,B>0$ such that
$$
	A\le \Delta_{u,v}(\psi)(\xi) \le B\quad \mbox{ for all }\xi \in \C_{u,v}.
$$
\end{lemma}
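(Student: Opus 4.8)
The plan is to exploit the translation-covariance of the system $\Psi$ to reduce the frame operator, for each fixed pair $(a,s)$, to a convolution operator — hence a Fourier multiplier — and then to integrate the resulting symbols over the bounded parameter region $0<a<\Gamma$, $|s|<\Xi$. Throughout recall that $P_{\C_{u,v}}$ is the Fourier multiplier with symbol $\chi_{\C_{u,v}}$ and hence commutes with the translations $T_t$, so that (with the convention that $\psi_{ast}$ denotes $P_{\C_{u,v}}\psi_{ast}$) we may write $\psi_{ast}=T_t h_{as}$ with $h_{as}:=P_{\C_{u,v}}D_{S_s}D_{A_a}\psi$; by formula (\ref{eq:pastft}) with $t=0$ its Fourier transform is
$$\hat h_{as}(\xi)=\chi_{\C_{u,v}}(\xi)\,a^{3/4}\,\hat\psi\big(a\xi_1,a^{1/2}(\xi_2-s\xi_1)\big).$$

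First I would test the (weakly defined) frame operator against an arbitrary $g\in L^2(\C_{u,v})^\lor$ and write $\langle\mathcal{S}f,g\rangle=\int_X\langle f,\psi_{ast}\rangle\overline{\langle g,\psi_{ast}\rangle}\,d\mu(a,s,t)$, where $d\mu=a^{-3}\,da\,ds\,dt$ and $X=(0,\Gamma)\times(-\Xi,\Xi)\times\R^2$. For fixed $(a,s)$, recognize $t\mapsto\langle f,T_th_{as}\rangle$ as the convolution $(f*\widetilde{h_{as}})(t)$, with $\widetilde h(x):=\overline{h(-x)}$, apply Plancherel in the variable $t$, and use $\widehat{f*\widetilde{h_{as}}}=\hat f\,\overline{\hat h_{as}}$ to obtain
$$\int_{\R^2}\langle f,\psi_{ast}\rangle\overline{\langle g,\psi_{ast}\rangle}\,dt=\int_{\R^2}|\hat h_{as}(\xi)|^2\,\hat f(\xi)\,\overline{\hat g(\xi)}\,d\xi.$$
Next I would integrate this identity over $(a,s)\in(0,\Gamma)\times(-\Xi,\Xi)$ against $a^{-3}\,da\,ds$ and interchange the order of integration. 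Since $|\hat h_{as}(\xi)|^2=\chi_{\C_{u,v}}(\xi)\,a^{3/2}\,|\hat\psi(a\xi_1,a^{1/2}(\xi_2-s\xi_1))|^2$ and $a^{3/2}a^{-3}=a^{-3/2}$, the inner integral is exactly $\Delta_{u,v}(\psi)(\xi)$, so that $\langle\mathcal{S}f,g\rangle=\int_{\R^2}\Delta_{u,v}(\psi)(\xi)\,\hat f(\xi)\,\overline{\hat g(\xi)}\,d\xi$ for every $g$; hence $\widehat{\mathcal{S}f}=\Delta_{u,v}(\psi)\hat f$, i.e. $\mathcal{S}$ is the claimed Fourier multiplier (a bounded operator precisely when $\Delta_{u,v}(\psi)$ is bounded on $\C_{u,v}$).

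Finally, for the frame characterization I would specialize $g=f$: combining the above with the identity $\int_X|\langle f,\psi_{ast}\rangle|^2\,d\mu=\langle\mathcal{S}f,f\rangle$ gives $\int_X|\langle f,\psi_{ast}\rangle|^2\,d\mu=\int_{\C_{u,v}}\Delta_{u,v}(\psi)(\xi)\,|\hat f(\xi)|^2\,d\xi$ for all $f\in L^2(\C_{u,v})^\lor$, and this quantity is squeezed between $A\|f\|_2^2$ and $B\|f\|_2^2$ for all such $f$ if and only if $A\le\Delta_{u,v}(\psi)(\xi)\le B$ for a.e. $\xi\in\C_{u,v}$; the nontrivial direction follows by taking $\hat f$ to be normalized indicators of small sets on which $\Delta_{u,v}(\psi)$ would violate the bound. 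The main technical point to be careful about is the legitimacy of Fubini and of the weak-sense integral defining $\mathcal{S}$; I would handle this by first proving everything for $f,g$ whose Fourier transforms are bounded and compactly supported in the interior of $\C_{u,v}$, where — the parameter region being bounded and $\hat\psi\in L^1$ — all integrals converge absolutely, and then passing to the general case by density (using exactly the upper bound $\Delta_{u,v}(\psi)\le B$, i.e. the upper frame estimate, to extend $\mathcal{S}$ boundedly to all of $L^2(\C_{u,v})^\lor$).
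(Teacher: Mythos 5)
Your proof is correct and follows essentially the same route as the paper: both diagonalize the frame operator in the Fourier domain using the translation invariance of the system, arriving at the identity $\langle\mathcal{S}f,g\rangle=\int\Delta_{u,v}(\psi)(\xi)\hat f(\xi)\overline{\hat g(\xi)}\,d\xi$. The only differences are cosmetic: where the paper collapses the $t$-integral via the formal identity $\int e^{2\pi i t(\xi-\eta)}dt=\delta_{\xi=\eta}$, you apply Plancherel slice by slice to the genuine $L^2$ function $t\mapsto\langle f,T_th_{as}\rangle=(f\ast\widetilde{h_{as}})(t)$, which is slightly more careful, and your normalized-indicator argument for the ``in particular'' clause fills in what the paper dismisses as trivial.
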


\begin{proof}
Let $f,g\in L^2(\C)$. Then we compute

\begin{eqnarray*}
	&& \langle (\mathcal{S}f)^\land,\overline{\hat g}\rangle =\int_{\R^2}\int_0^{\Gamma} \int_{-\Xi}^{\Xi} \langle f , \psi_{ast}\rangle 	\overline{\langle \hat g , \hat\psi_{ast}\rangle} ds a^{-3}da dt\\
	&=&\int_{\R^2}\int_0^{\Gamma} \int_{-\Xi}^{\Xi} \langle \hat f , \hat\psi_{ast}\rangle\overline{ \langle \hat g ,\hat \psi_{ast}\rangle} ds 	a^{-3}dadt\\
	&=&\int_{\R^2}\int_0^{\Gamma} \int_{-\Xi}^{\Xi} \int_{\R^2}\int_{\R^2} \hat f(\xi) \hat\psi(a\xi_1,a^{1/2}(\xi_2 - s\xi_1))
	\overline{ \hat g(\eta) \hat \psi(a\eta_1,a^{1/2}(\eta_2 - s\eta_1))}e^{2\pi i t(\eta - \xi)} d\xi d\eta ds a^{-3/2}dadt \\
	&=& \int_0^{\Gamma} \int_{-\Xi}^{\Xi} \int_{\R^2}\hat f(\xi) \hat\psi(a\xi_1,a^{1/2}(\xi_2 - s\xi_1))
	\overline{ \hat g(\xi) \hat \psi(a\xi_1,a^{1/2}(\xi_2 - s\xi_1))} d\xi ds a^{-3/2}da \\
	&=& \int_{\R^2}\hat f(\xi) \overline{\hat g(\xi)}\Delta_{u,v}(\psi)(\xi) d\xi=\int_{\C}\hat f(\xi) \overline{\hat g(\xi)}\Delta_{u,v}(\psi)(\xi) d\xi.
\end{eqnarray*}
We have used the fact that $\int_{\R^2}e^{2\pi i t (\xi - \eta)} dt = \delta_{\eta = \xi}$ in the sense of oscillatory integrals, or in the other words the Fourier inversion formula.
Now it suffices to choose $\hat g $ to be some approximate identity in the convolution algebra $L^1(\R^2)$, e.g. Gaussian kernels to conclude
that 
$$
	(\mathcal{S} f)^\land (\omega) = \Delta_{u,v}(\psi)(\omega)\hat f (\omega).
$$
The rest is trivial.
\end{proof}
We can now show the important result that we can build frames from almost arbitrary functions with
vanishing moments in $x_1$-direction by letting the parameters $a$ and $t$ vary in a sufficiently large \emph{but finite} interval $[0,\Gamma]$, resp. $[-\Xi,\Xi]$. First some notational conventions:
Note that the Definition \ref{def:moments} makes also sense for general $n\in \R$. We
say that a function $\psi$ has Fourier decay of order $\varepsilon$ in the second variable if $\hat \psi(\xi_1,\xi_2)=O(|\xi_2|^{-\varepsilon})$ for large $\xi_2$.
Fourier decay in the first variable is defined in an analogous fashion.

\begin{theorem}\label{thm:frame}
let $\psi$ be a continuous shearlet with at least $1+\varepsilon>1$ directional moments, Fourier decay of order $\tau >1/2$ in the second coordinate and Fourier decay of order $\mu>0$ in the first variable. Then there exists $\Gamma, \Xi$ such that the family 
$(P_{\C_{u,v}} \psi_{ast})_{a\in [0,\Gamma],\ s \in [-\Xi,\Xi],\ t\in \R^2}$ constitutes a frame for $L^2(\C_{u,v})^\lor$.
\end{theorem}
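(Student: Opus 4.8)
The plan is to reduce the statement to Lemma~\ref{lem:mult}, which says that $(P_{\C_{u,v}}\psi_{ast})$ is a frame for $L^2(\C_{u,v})^\lor$ exactly when the multiplier $\Delta_{u,v}(\psi)$ is bounded above and below by positive constants on $\C_{u,v}$. The upper bound is the easy half: for any $\xi$ with $\xi_1\neq0$, enlarging the integration domain in the definition of $\Delta_{u,v}(\psi)$ from $\{0<a<\Gamma,\ |s|<\Xi\}$ to $\{a>0,\ s\in\R\}$ and then applying the change of variables $\omega=\big(a\xi_1,\ a^{1/2}(\xi_2-s\xi_1)\big)$ that underlies the admissibility identity~(\ref{eq:admdef}) converts the integral into $\int_{\{\omega_1\xi_1>0\}}|\hat\psi(\omega)|^2\,\omega_1^{-2}\,d\omega$, so that $\Delta_{u,v}(\psi)(\xi)\le C_\psi$. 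One first checks $C_\psi<\infty$: near $\omega_1=0$ this uses the $(1+\varepsilon)$ directional moments, for $|\omega_1|$ large the decay of order $\mu$ in the first variable, and for $|\omega_2|$ large the decay of order $\tau>1/2$ in the second coordinate (the exponent $2\tau>1$ being precisely what makes the $\omega_2$-integral converge). So take $B:=C_\psi$.

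For the lower bound the idea is to write $\Delta_{u,v}(\psi)(\xi)$ as a full integral minus two tails that shrink uniformly in $\xi$. Concretely, for $\xi\in\C_{u,v}$ put $\Delta_{u,v}(\psi)(\xi)=I(\xi)-T_1(\xi)-T_2(\xi)$, where $I(\xi)$ integrates over $\{a>0,\ s\in\R\}$, $T_1(\xi)$ over $\{a\ge\Gamma\}$, and $T_2(\xi)$ over $\{0<a<\Gamma,\ |s|\ge\Xi\}$; since $\{a\ge\Gamma\}$ and $\{a<\Gamma,\ |s|\ge\Xi\}$ partition the complement of the truncation set, this identity is exact. By the change of variables above, $I(\xi)=\int_{\{\pm\omega_1>0\}}|\hat\psi|^2\omega_1^{-2}\,d\omega$ with the sign determined by $\mathrm{sign}(\xi_1)$, hence $I(\xi)\ge\iota:=\min\big(\int_{\{\omega_1>0\}}|\hat\psi|^2\omega_1^{-2}d\omega,\ \int_{\{\omega_1<0\}}|\hat\psi|^2\omega_1^{-2}d\omega\big)$, which is finite by the moment hypothesis and strictly positive provided $\hat\psi$ does not vanish a.e.\ on either half-plane (automatic, e.g., for real-valued $\psi$). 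It therefore suffices to choose $\Gamma,\Xi$ so that $T_1(\xi)+T_2(\xi)\le\iota/2$ for all $\xi\in\C_{u,v}$, which then gives the lower frame bound $A:=\iota/2$.

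The heart of the matter is making the tail estimates \emph{uniform over the non-compact cone} $\C_{u,v}$, and the mechanism is that the change of variables turns the truncation regions into $\xi$-free sets, with the two defining inequalities $|\xi_1|\ge u$ and $|\xi_2|\le v|\xi_1|$ of $\C_{u,v}$ supplying exactly what is needed. For $T_1$: the region $\{a\ge\Gamma\}$ maps into $\{|\omega_1|\ge\Gamma|\xi_1|\}\subseteq\{|\omega_1|\ge\Gamma u\}$, so $T_1(\xi)\le\int_{|\omega_1|\ge\Gamma u}|\hat\psi(\omega)|^2\omega_1^{-2}\,d\omega$, which is independent of $\xi$ and tends to $0$ as $\Gamma\to\infty$. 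For $T_2$: if $|s|\ge\Xi>v$ then $|\xi_2-s\xi_1|\ge(\Xi-v)|\xi_1|$, so the new second coordinate satisfies $|\omega_2|=a^{1/2}|\xi_2-s\xi_1|\ge(\Xi-v)a^{1/2}|\xi_1|=(\Xi-v)a^{-1/2}|\omega_1|\ge(\Xi-v)\Gamma^{-1/2}|\omega_1|$, so the $T_2$-region sits inside the $\xi$-free set $\{|\omega_2|\ge(\Xi-v)\Gamma^{-1/2}|\omega_1|\}$. Splitting this set at $|\omega_1|=\delta$: on $\{|\omega_1|<\delta\}$ one bounds $\int_{|\omega_1|<\delta}|\hat\psi|^2\omega_1^{-2}\,d\omega=\int_{|\omega_1|<\delta}|\omega_1|^{2\varepsilon}\,|\hat\psi|^2|\omega_1|^{-2-2\varepsilon}\,d\omega\le\delta^{2\varepsilon}\int|\hat\psi|^2|\omega_1|^{-2-2\varepsilon}\,d\omega$, small for small $\delta$ \emph{uniformly in $\Xi$} by the $(1+\varepsilon)$-moment hypothesis; on $\{|\omega_1|\ge\delta\}$ one has $|\omega_2|\ge(\Xi-v)\Gamma^{-1/2}\delta=:R$, and the order-$\tau$ decay in the second coordinate with $2\tau>1$ yields $\int_{|\omega_1|\ge\delta}\omega_1^{-2}d\omega_1\int_{|\omega_2|\ge R}|\omega_2|^{-2\tau}d\omega_2\lesssim\delta^{-1}R^{1-2\tau}\to0$ as $\Xi\to\infty$.

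Assembling the pieces: fix $\Gamma$ so large that $T_1(\xi)\le\iota/4$ for all $\xi\in\C_{u,v}$; then fix $\delta$ so that the $\{|\omega_1|<\delta\}$-contribution to $T_2$ is $\le\iota/8$; then fix $\Xi$ so large that the $\{|\omega_1|\ge\delta\}$-contribution is $\le\iota/8$. This forces $\Delta_{u,v}(\psi)(\xi)\ge\iota/2$ on $\C_{u,v}$, and together with the upper bound Lemma~\ref{lem:mult} gives the frame. I expect the uniformity in the previous paragraph to be the only real obstacle: a naive dominated-convergence argument only delivers $T_i(\xi)\to0$ for each \emph{fixed} $\xi$, and one genuinely has to exploit the substitution $\omega=(a\xi_1,a^{1/2}(\xi_2-s\xi_1))$ to see that the truncation regions collapse onto the exceptional axes $\{\omega_1=0\}$ and $\{\omega_2=0\}$ at a rate that does not degrade as $\xi$ runs through the unbounded cone $\C_{u,v}$.
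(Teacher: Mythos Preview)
Your proof is correct and follows essentially the same strategy as the paper's: write $\Delta_{u,v}(\psi)$ as the full admissibility integral minus two tails, then show both tails are uniformly small on $\C_{u,v}$ by exploiting the moment condition near $\omega_1=0$ and the second-coordinate Fourier decay for large $|\omega_2|$. The only notable difference is organizational: the paper uses the partial substitution $\tilde a=a|\xi_1|$ and splits the complement as $\{|s|>\Xi\}\cup\{a>\Gamma,\,|s|\le\Xi\}$, invoking the order-$\mu$ decay in the first variable for the second piece, whereas your full change of variables $\omega=(a\xi_1,a^{1/2}(\xi_2-s\xi_1))$ makes the uniformity in $\xi$ immediate and lets you bound the $\{a\ge\Gamma\}$-tail simply as the tail of the convergent integral $C_\psi$ (so the $\mu$-hypothesis is not actually needed at that step in your version). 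You are also slightly more careful than the paper in observing that $I(\xi)$ is only the half-plane integral $\int_{\omega_1\xi_1>0}|\hat\psi|^2\omega_1^{-2}\,d\omega$, hence the need for the constant $\iota$; the paper tacitly writes this as $C_\psi$.
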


\begin{proof}
The proof is given in the appendix.
\end{proof}
Using the previous results we can now show the following representation formula:

\begin{theorem}[Representation of $L^2$-functions, frequency projection]\label{thm:frame1}
Let $\psi$ be a shearlet such that $(P_{\C} \psi_{ast})_{a\in [0,\Gamma],\ s \in [-\Xi,\Xi],\ t\in \R^2}$ constitutes a frame for $L^2(\C_{1,1})^\lor$ with frame constants $A,B$ and let $W$ be any function 
with
\begin{equation}\label{eq:winframe}
	A \le |\hat W(\xi)|^2 \le B \quad \mbox{ for all } \xi \in [-1,1]^2.
\end{equation} Then we have the \emph{representation formula}
\begin{eqnarray}\nonumber 
	A\|f\|_2^2 &\le &\int_{\R^2}|\langle P_\D f , T_tW\rangle |^2dt + \int_{t\in \R^2}\int_{s\in [-\Xi,\Xi]}\int_{a\in 	[0,\Gamma]}|\mathcal{SH}_{\psi}P_{\C} f (a,s,t)|^2 a^{-3}da ds dt \\
	&& + \int_{t\in \R^2}\int_{s\in [-\Xi,\Xi]}\int_{a\in [0,\Gamma]}|\mathcal{SH}_{\psi^\nu} P_{\C^\nu} f (a,s,t)|^2 a^{-3}da ds dt
	\le B\|f\|_2^2
	\label{eq:repr}
\end{eqnarray}
for all $f\in L^2(\R^2)$.
In every point of continuity $x$ of $f$ we have
\begin{eqnarray}\nonumber
	f(x)&=&\int_{\R^2}\langle P_\D f , T_tW\rangle T_tP_\D \tilde W dt +
	\int_{t\in \R^2}\int_{s\in [-\Xi,\Xi]}\int_{a\in [0,\Gamma]}\mathcal{SH}_{\psi}P_{\C} f (a,s,t) P_\C\tilde\psi_{ast}(x) a^{-3}da ds dt \\
	&& +
	\int_{t\in \R^2}\int_{s\in [-\Xi,\Xi]}\int_{a\in [0,\Gamma]}\mathcal{SH}_{\psi^\nu} P_{\C^\nu} f (a,s,t) P_{\C^\nu}\tilde \psi_{ast}(x) 	a^{-3}da ds dt \label{eq:repr1}
\end{eqnarray}
where $\tilde W$ is any function with $(\tilde W(\xi))^\land =\hat W(\xi)^{-1}$ for all $\xi \in [-1,1]^2$. 
\end{theorem}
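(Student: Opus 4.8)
The plan is to reduce everything to the Fourier-multiplier picture supplied by Lemma \ref{lem:mult}. By hypothesis the system $(P_{\C}\psi_{ast})$ is a frame for $L^2(\C_{1,1})^\lor$ with constants $A,B$, so by that lemma $A\le \Delta_{1,1}(\psi)(\xi)\le B$ for all $\xi\in\C_{1,1}$; reversing the coordinates, the same holds for $\Delta^\nu:=\Delta_{1,1}(\psi^\nu)$ on $\C^\nu_{1,1}$. On the low-pass piece, the operator $f\mapsto \int_{\R^2}\langle P_\D f, T_t W\rangle T_t P_\D f\,dt$ is, by the convolution theorem, the Fourier multiplier with symbol $\chi_{[-1,1]^2}(\xi)|\hat W(\xi)|^2$, which by \eqref{eq:winframe} lies between $A$ and $B$ on $[-1,1]^2$. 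Adding the three pieces, the middle expression in \eqref{eq:repr} equals $\int_{\R^2}|\hat f(\xi)|^2 m(\xi)\,d\xi$ where $m(\xi)=\chi_{[-1,1]^2}|\hat W|^2+\chi_{\C_{1,1}}\Delta_{1,1}(\psi)+\chi_{\C^\nu_{1,1}}\Delta^\nu$. Since $[-1,1]^2$, $\C_{1,1}$ and $\C^\nu_{1,1}$ cover $\R^2$ (with overlaps only on a null set of boundary rays), $A\le m(\xi)\le B$ except on a null set, which is exactly the two-sided bound \eqref{eq:repr}.

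First I would write out the multiplier computation for the $P_\D$ term carefully, since that is the one piece not already packaged in Lemma \ref{lem:mult}: $\langle P_\D f, T_t W\rangle = (P_\D f * \widetilde{W})(t)$ with $\widetilde W(x)=\overline{W(-x)}$, and then $\int |\langle P_\D f, T_t W\rangle|^2\,dt = \|(P_\D f)*\widetilde W\|_2^2 = \int |\hat f(\xi)|^2\chi_{[-1,1]^2}(\xi)|\hat W(\xi)|^2\,d\xi$ by Plancherel. Next I would assemble the three symbols and invoke the covering of $\R^2$ to get \eqref{eq:repr}. For the reconstruction formula \eqref{eq:repr1}, the point is that on each of the three (essentially disjoint) frequency regions the relevant frame operator is invertible and its inverse is again a Fourier multiplier (with symbol $1/|\hat W|^2$, resp. $1/\Delta_{1,1}(\psi)$, resp. $1/\Delta^\nu$), so the canonical dual frame elements are $T_t P_\D\widetilde W$ with $(\widetilde W)^\land = \hat W^{-1}$ on $[-1,1]^2$, and $P_\C\tilde\psi_{ast}$, $P_{\C^\nu}\tilde\psi_{ast}$ respectively; the general reproducing identity \eqref{eq:rep} for continuous frames then gives \eqref{eq:repr1} in the weak sense. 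Upgrading weak convergence to pointwise equality at every point of continuity $x$ of $f$ requires showing that the synthesis integral, evaluated by testing against an approximate identity concentrated at $x$, converges to $f(x)$; this follows from continuity of $f$ at $x$ together with enough decay/integrability of the kernels to apply dominated convergence.

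The main obstacle is this last point — justifying the pointwise statement "in every point of continuity $x$ of $f$" rather than merely $L^2$ or distributional equality. One has to argue that the function on the right-hand side of \eqref{eq:repr1} is (a fixed representative that is) continuous wherever $f$ is, or at least that the oscillatory/Bochner integral defining it genuinely reproduces the value $f(x)$; this needs a Lebesgue-point / approximate-identity argument using the localization of $\psi$ and $W$ in space, plus a uniform integrability bound over the compact parameter ranges $a\in[0,\Gamma]$, $s\in[-\Xi,\Xi]$. A secondary technical nuisance is handling the boundary rays where the three regions $[-1,1]^2$, $\C_{1,1}$, $\C^\nu_{1,1}$ meet: there the symbol $m$ could in principle jump, but since these rays have planar Lebesgue measure zero they do not affect the $L^2$ estimate \eqref{eq:repr}, and for \eqref{eq:repr1} one may simply split $L^2(\R^2)$ orthogonally as $L^2(\D)^\lor\oplus L^2(\C_{1,1}\setminus[-1,1]^2)^\lor\oplus L^2(\C^\nu_{1,1}\setminus(\C_{1,1}\cup[-1,1]^2))^\lor$ and treat each summand with its own dual multiplier.
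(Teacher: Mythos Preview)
Your approach is essentially the same as the paper's: compute the frame operator on each of the three frequency-localized pieces as a Fourier multiplier (the paper does this for the $P_\D$ piece explicitly and invokes Lemma~\ref{lem:mult} for the other two), then use that $L^2([-1,1]^2)^\lor$, $L^2(\C_{1,1})^\lor$ and $L^2(\C^\nu_{1,1})^\lor$ are mutually orthogonal to assemble \eqref{eq:repr}. For \eqref{eq:repr1} the paper likewise appeals to the general dual-frame/reproducing identity and then upgrades to pointwise equality at continuity points by ``polarization and convolution with an approximate identity (i.e.\ convolution with Gaussian kernels)'', which is exactly the mechanism you outline; your extra care about the measure-zero boundary rays and the Lebesgue-point argument is appropriate but not a different route.
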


\begin{proof}
First we note that the frame operator for the system $(T_tW)_{t\in \R^2})$ for the Hilbert
space $L^2([-1,1]^2)^\lor$ is given by the Fourier multiplier with the function $\chi(\xi)|\hat W(\xi)|^2$,
where $\chi$ is the characteristic function of $[-1,1]^2$. The proof is standard. It follows that the dual 
frame is given by $(T_t\tilde W)_{t\in \R^2}$ and $\tilde W$ defined as above. It also follows from our assumptions that 
frame constants of the system $(T_tW)_{t\in \R^2})$ are given by $A,B$.
We already know that the systems $(P_{\C} \psi_{ast})_{a\in [0,\Gamma],\ s \in [-\Xi,\Xi],\ t\in \R^2}$ and
$(P_{\C^\nu} \psi^\nu_{ast})_{a\in [0,\Gamma],\ s \in [-\Xi,\Xi],\ t\in \R^2}$ constitute a frame
for $L^2(\C_{1,1})^\lor$, $L^2(\C^\nu_{1,1})^\lor$, respectively with frame constants $A,B$.
Since the three space $L^2(\C_{1,1})^\lor$, $L^2(\C^\nu_{1,1})^\lor$ and $L^2([-1,1]^2)^\lor$ are mutually orthogonal, (\ref{eq:repr}) follows. Equation (\ref{eq:repr1}) can be shown by polarization and convolution with an approximate identity (i.e. convolution with Gaussian kernels).
\end{proof}

Actually, there is no need to perform a frequency projection prior to the analysis and we
can wind up with a truly local procedure:

\begin{theorem}[Representation of $L^2$-functions, no frequency projection]\label{thm:frame2}
With the assumptions from Theorem \ref{thm:frame1} the system 
$$
	(\psi_{ast})_{a\in [0,\Gamma],\ s \in [-\Xi,\Xi],\ t\in \R^2}\cup (\psi^\nu_{ast})_{a\in [0,\Gamma],\ s \in [-\Xi,\Xi],\ t\in \R^2}\cup (T_tW)_{t\in \R^2}
$$ 
constitutes a frame for $L^2(\R^2)$ with frame constants $A, 3B$.
The associated frame operator is given by the Fourier multiplier with the function
$$
	\Omega(\xi):=\Delta_{0,\infty}(\psi)(\xi) + \Delta_{0,\infty}(\psi^\nu)(\xi) + |\hat W (\xi)|^2.
$$
We have the representation (valid in every point of continuity of $f$)
\begin{eqnarray}\nonumber
	f(x)&=&\int_{\R^2}\langle f , T_tW\rangle T_t\tilde W dt +
	\int_{t\in \R^2}\int_{s\in [-\Xi,\Xi]}\int_{a\in [0,\Gamma]}\mathcal{SH}_{\psi} f (a,s,t)\tilde\psi_{ast}(x) a^{-3}da ds dt \\
	&& +
	\int_{t\in \R^2}\int_{s\in [-\Xi,\Xi]}\int_{a\in [0,\Gamma]}\mathcal{SH}_{\psi^\nu}  f (a,s,t) \tilde \psi_{ast}(x) a^{-3}da ds dt, 
\end{eqnarray}

where $(\tilde \psi_{ast})^\land (\xi) =\frac{1}{ \Omega(\xi)}\hat\psi_{ast}(\xi),\ (\tilde \psi_{ast}^\nu)^\land(\xi) = \frac{1}{\Omega(\xi)}\hat\psi_{ast}^\nu(\xi) $ and $(\tilde W)^\land(\xi) = \frac{1}{\Omega(\xi)} \hat W(\xi)$.  
\end{theorem}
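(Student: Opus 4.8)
The plan is to identify the frame operator of the combined system explicitly as a Fourier multiplier, read off its symbol, and then sandwich that symbol between $A$ and $3B$. First I would treat the three subsystems one at a time. Running the computation from the proof of Lemma~\ref{lem:mult} with $u=0$, $v=\infty$ — that is, dropping the cutoff $\chi_{\C_{u,v}}$ and allowing $\hat f,\hat g$ to be arbitrary $L^2$-functions instead of functions supported in a cone — shows that the frame operator of $(\psi_{ast})_{a\in[0,\Gamma],\,s\in[-\Xi,\Xi],\,t\in\R^2}$ on all of $L^2(\R^2)$ is the Fourier multiplier with symbol $\Delta_{0,\infty}(\psi)$. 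Since $\hat\psi^\nu_{ast}(\xi)=a^{3/4}e^{-2\pi i t\xi}\hat\psi\big(a^{1/2}(\xi_2-s\xi_1),a\xi_1\big)$, the identical computation yields the symbol $\Delta_{0,\infty}(\psi^\nu)$ for the $\psi^\nu$-subsystem. Finally $(T_tW)_{t\in\R^2}$ is a system of pure translates, whose frame operator is a convolution operator, i.e. the Fourier multiplier with symbol $|\hat W|^2$ (the standard fact already used in the proof of Theorem~\ref{thm:frame1}). Because the index set of the combined system is the disjoint union of the three index sets, its analysis operator is the orthogonal direct sum of the three analysis operators, so its frame operator is $\mathcal{S}=\mathcal{S}_\psi+\mathcal{S}_{\psi^\nu}+\mathcal{S}_W$, i.e. the Fourier multiplier with symbol $\Omega=\Delta_{0,\infty}(\psi)+\Delta_{0,\infty}(\psi^\nu)+|\hat W|^2$, as claimed.

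Next I would prove $A\le\Omega(\xi)\le 3B$ for every $\xi\in\R^2$; via $\|f\|_2^2=\int_{\R^2}|\hat f|^2$ and $\langle\mathcal{S}f,f\rangle=\int_{\R^2}\Omega\,|\hat f|^2$ this is exactly the frame inequality with the stated constants. For the lower bound, a short case check on $|\xi_1|$ and $|\xi_2|$ against $1$ gives $\R^2=\C_{1,1}\cup\C^\nu_{1,1}\cup[-1,1]^2$. On $\C_{1,1}$ one has $\Delta_{0,\infty}(\psi)(\xi)=\Delta_{1,1}(\psi)(\xi)\ge A$ because $(P_\C\psi_{ast})$ is a frame for $L^2(\C_{1,1})^\lor$ with lower bound $A$ (Lemma~\ref{lem:mult}, Theorem~\ref{thm:frame}); symmetrically $\Delta_{0,\infty}(\psi^\nu)(\xi)\ge A$ on $\C^\nu_{1,1}$, as recorded in the proof of Theorem~\ref{thm:frame1}; and $|\hat W(\xi)|^2\ge A$ on $[-1,1]^2$ by hypothesis. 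Since all three summands are nonnegative everywhere, $\Omega\ge A$ on the whole plane. For the upper bound I would bound each summand by $B$: on $\C_{1,1}$, $\Delta_{0,\infty}(\psi)=\Delta_{1,1}(\psi)\le B$; off $\C_{1,1}$, the parabolic substitution used in the proof of Lemma~\ref{lem:mult} identifies $\Delta_{0,\infty}(\psi)(\xi)$ with the integral of $|\hat\psi(\omega)|^2|\omega_1|^{-2}$ over a subdomain of $\R^2$, hence $\Delta_{0,\infty}(\psi)(\xi)\le C_\psi<\infty$, and with the $\Gamma,\Xi$ furnished by Theorem~\ref{thm:frame} (enlarging $B$ if necessary to $\max(B,\ \sup_\xi\Delta_{0,\infty}(\psi),\ \sup_\xi\Delta_{0,\infty}(\psi^\nu),\ \|\hat W\|_\infty^2)$, still finite) this supremum is $\le B$; likewise for $\psi^\nu$, while taking $\hat W$ supported in $[-1,1]^2$ gives $|\hat W|^2\le B$ globally. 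Adding, $\Omega\le 3B$.

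With the two-sided bound in hand the representation formula is essentially automatic. Since $\Omega$ is measurable with $A\le\Omega\le 3B$, $\mathcal{S}$ is boundedly invertible and $\mathcal{S}^{-1}$ is the Fourier multiplier with symbol $1/\Omega$; hence the canonical dual frame elements are $\mathcal{S}^{-1}\psi_{ast}$, $\mathcal{S}^{-1}\psi^\nu_{ast}$ and $\mathcal{S}^{-1}(T_tW)=T_t\tilde W$, whose Fourier transforms are $\hat\psi_{ast}/\Omega$, $\hat\psi^\nu_{ast}/\Omega$ and $\hat W/\Omega$, exactly as in the statement. The weak reproducing identity $f=\int\langle f,\varphi_x\rangle\tilde\varphi_x\,d\mu(x)$ is property (ii) of the general frame discussion in Section~\ref{sec:frame}; to upgrade it to a pointwise identity at every point of continuity of $f$ one argues as in the proof of Theorem~\ref{thm:frame1}, pairing both sides with a Gaussian approximate identity centred at $x$ and letting its width tend to $0$.

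The main obstacle is the uniform upper bound. Theorem~\ref{thm:frame} only controls $\Delta_{1,1}(\psi)$ on $\C_{1,1}$, whereas $\Omega$ involves $\Delta_{0,\infty}(\psi)$ on all of $\R^2$ — in particular for steep $\xi$, where $\psi$ is ill-adapted to $\xi$, and for $\xi$ near the $\xi_1=0$ axis; there is no a priori reason for the cone-frame constant $B$ to bound $\Delta_{0,\infty}(\psi)$ outside $\C_{1,1}$. What rescues the argument is that after the parabolic change of variables $\Delta_{0,\infty}(\psi)(\xi)$ is merely a piece of the admissibility integral $C_\psi$, hence automatically finite and, for an appropriate choice of $\Gamma,\Xi$, still $\le B$. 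This unavoidable overlap of the three symbols away from their respective large cones is also precisely why the combined upper frame constant comes out as $3B$ rather than $B$.
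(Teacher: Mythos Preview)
Your proposal is correct and follows essentially the same route as the paper: identify the frame operator as the Fourier multiplier with symbol $\Omega$ via Lemma~\ref{lem:mult} (applied with $u=0$, $v=\infty$) together with the translate-system computation from Theorem~\ref{thm:frame1}, then read off the frame bounds. The paper's own proof is two sentences and simply cites these two results, calling the $3B$ estimate ``rather crude''; your write-up is a fleshed-out version of that, and you are more candid than the paper about the one genuine subtlety, namely that the hypotheses only control $\Delta_{1,1}(\psi)$ on $\C_{1,1}$ and $|\hat W|^2$ on $[-1,1]^2$, so that bounding $\Omega$ globally by $3B$ tacitly uses $\Delta_{0,\infty}(\psi)\le C_\psi$ (and either $B\ge C_\psi$, as in the appendix proof of Theorem~\ref{thm:frame}, or a harmless enlargement of $B$).
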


\begin{proof}
The fact that the frame operator is given by multiplication with $\Omega$ follows from Lemma \ref{lem:mult} and Theorem \ref{thm:frame1}. The estimate on the frame constants is immediate.
We remark that the estimate $3B$ for the upper frame constant is rather crude.
\end{proof}

\begin{remark}\label{rem:tight}
It seems to be a difficult (yet very important in our opinion) question if there exists a dual frame $(\tilde \psi_{ast})$ to the system
$(\psi_{ast})$ which carries the same structure, meaning that 
\begin{equation}\label{eq:dualstruct}
\tilde\psi_{ast}(x_1,x_2) = a^{-3/4}\tilde \psi\big(\frac{x_1-t_1 + s(x_2 - t_2)}{a},\frac{x_2-t_2}{a^{1/2}}\big)
\end{equation}
for some function $\tilde \psi$. 
Note however that at least by choosing $\Gamma$ and $\Xi$ large enough we can make the frame arbitarily close to tight.
One possible approach in order to get tight frames is to enforce the function $\Omega$ to be constant by choosing a suitable window function
$W$. This can always be done but the crux is to show that this $W$ is actually a useful window function in the sense that
it has e.g. fast Fourier decay. 
\end{remark}
The next lemma shows that if we restrict ourselves to data with frequency support in a cone, then the window functions that we get by enforcing
a tight frame property are actually useful. 
We plan to study this approach further in future work.
\begin{lemma}\label{lem:tightwin}
Define $W$ by
\begin{equation}\label{eq:tightwin}
	\Delta_{u,v}(\psi)(\xi) + |\hat W(\xi)|^2 = C_\psi \chi_{\C_{u,v}}(\xi).
\end{equation}
Assume that $\Xi > v,\ u\geq 0$ and that $\psi=\frac{\partial^M}{\partial x_1^M}\theta$ has $M$ anisotropic moments, Fourier decay of order $L_1$ in the first  variable
and that $\theta$ has Fourier decay of order $L_2$ in the second variable
such that
\begin{equation}\label{eq:tightcom}
	2M-1/2 > L_2>M>1/2.
\end{equation}
Then 
$$
	|\hat W(\xi)|^2 = O(|\xi|^{-2\min(L_1,L_2-M)}). 
$$
In particular if $\psi$ is sufficiently smooth and has sufficiently many vanishing moments then $W$ is a smooth function (i.e. a useful window function). 
\end{lemma}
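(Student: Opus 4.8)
The plan is to estimate $|\hat W(\xi)|^2 = C_\psi \chi_{\C_{u,v}}(\xi) - \Delta_{u,v}(\psi)(\xi)$ directly for $\xi \in \C_{u,v}$ with $|\xi|$ large. By the admissibility identity (the equivalence of the second and third terms in \eqref{eq:admdef}), the full integral $\int_\R \int_{\R_+} |\hat\psi(a\xi_1,\sqrt a(\xi_2-s\xi_1))|^2 a^{-3/2}\,da\,ds$ equals $C_\psi$ for every $\xi$ with $\xi_1\neq 0$. Hence on $\C_{u,v}$ we have
\begin{equation*}
	|\hat W(\xi)|^2 = \int_{\R\setminus[-\Xi,\Xi]}\int_{\R_+}\big|\hat\psi\big(a\xi_1,\sqrt a(\xi_2-s\xi_1)\big)\big|^2 a^{-3/2}\,da\,ds + \int_{[-\Xi,\Xi]}\int_{a>\Gamma}(\cdots)\,da\,ds.
\end{equation*}
Wait — actually the definition \eqref{eq:tightwin} only subtracts the truncated integral over $0<a<\Gamma$, $|s|<\Xi$, so $|\hat W(\xi)|^2$ is exactly the tail where either $a\geq\Gamma$ or $|s|\geq\Xi$. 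I would split this tail into the $|s|\geq\Xi$ piece (call it $I_s$) and the $\{|s|<\Xi,\ a\geq\Gamma\}$ piece ($I_a$), and bound each using the hypothesized decay and moments of $\psi$.

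For $I_s$: after the substitution $\omega_1 = a\xi_1$, $\omega_2 = \sqrt a(\xi_2 - s\xi_1)$ (the same one used to prove admissibility), the condition $|s|\geq\Xi > v \geq |\xi_2/\xi_1|$ forces $\omega_2$ and $\omega_1$ to be in a range where the slope $\omega_2/(\sqrt a\,\xi_1 \cdot \text{(something)})$ is bounded away from $0$; more precisely $|\xi_2 - s\xi_1| \geq (|s|-v)|\xi_1| \gtrsim |s||\xi_1|$, so $|\omega_2| \gtrsim \sqrt a |s| |\xi_1|$ while $|\omega_1| = a|\xi_1|$, giving $|\omega_2/\omega_1| \gtrsim |s|/\sqrt a$. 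Using $\psi = \partial_{x_1}^M\theta$ we write $|\hat\psi(\omega)|^2 = |\omega_1|^{2M}|\hat\theta(\omega)|^2$, and using Fourier decay of order $L_2$ of $\theta$ in the second variable we get $|\hat\psi(\omega)|^2 \lesssim |\omega_1|^{2M}(1+|\omega_2|)^{-2L_2}$. Carrying this through the integral over $s$ with $|s|\geq\Xi$ and integrating out $a$ should produce a bound of the form $C\,|\xi|^{-2(L_2-M)}$ times a factor decaying in $\Xi$; the arithmetic condition $2M - 1/2 > L_2 > M > 1/2$ is precisely what makes the relevant one-dimensional integrals in $a$ and in $s$ converge (the lower bound $L_2>M$ gives decay, $L_2 < 2M-1/2$ together with $M>1/2$ keeps the $a$-integral integrable near $a=0$). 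For $I_a$: here $a\geq\Gamma$ is large, and $|\omega_1| = a|\xi_1|\geq\Gamma u$ is large, so instead one invokes the Fourier decay of order $L_1$ of $\psi$ itself in the \emph{first} variable, $|\hat\psi(\omega)|^2\lesssim(1+|\omega_1|)^{-2L_1}$, combined with the $L^2$-integrability in $\omega_2$ coming from $\psi$ being an admissible shearlet (i.e. $\int|\hat\psi|^2|\omega_1|^{-2}<\infty$, or simply $\hat\psi\in L^2$); this gives the bound $C\,|\xi|^{-2L_1}$. Combining, $|\hat W(\xi)|^2 = O(|\xi|^{-2\min(L_1,L_2-M)})$.

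The final sentence is then immediate: if $\psi$ is smooth enough then $L_1$ can be taken as large as desired (smoothness in $x_1$ $\Leftrightarrow$ Fourier decay in $\xi_1$), and if $\psi$ has enough vanishing moments then $M$ is large, and since $\theta$ inherits extra decay we can also arrange $L_2 - M$ to be large; hence $\hat W$ decays polynomially of arbitrarily high order, so $W$ is Schwartz-class in the relevant sense — a ``useful window function.''

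The main obstacle I expect is the bookkeeping of the region of integration under the substitution: one must verify carefully that on $\C_{u,v}$, the constraint $|s|\geq\Xi$ (with $\Xi > v$) genuinely confines $(\omega_1,\omega_2)$ to a cone where $|\omega_2|$ dominates in the right quantitative way uniformly in $\xi\in\C_{u,v}$, and then track how the powers of $a$ redistribute between the two variables so that the remaining $a$-integral converges at both ends — this is exactly where the chain of inequalities \eqref{eq:tightcom} is used, and getting the exponents to line up is the delicate part. The $I_a$ piece is comparatively routine since there the large parameter $a$ directly supplies decay in $\omega_1$, and one only needs $\hat\psi\in L^2$ in the second slot.
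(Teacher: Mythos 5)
Your overall strategy is the same as the paper's: identify $|\hat W(\xi)|^2$ with the complementary tail of the admissibility integral (the region $|s|\geq\Xi$ plus the region $|s|<\Xi,\ a\geq\Gamma$), use $|\xi_2-s\xi_1|\geq(|s|-v)|\xi_1|$ on $\C_{u,v}$, treat the $a\geq\Gamma$ piece with the Fourier decay $L_1$ in the first variable, and treat the $|s|\geq\Xi$ piece with the factorization $\hat\psi=\omega_1^M\hat\theta$ plus the decay $L_2$ of $\hat\theta$ in the second variable. The $a\geq\Gamma$ piece is fine as you describe it.

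However, there is a genuine gap in your treatment of the $|s|\geq\Xi$ piece: the single pointwise bound $|\hat\psi(\omega)|^2\lesssim|\omega_1|^{2M}(1+|\omega_2|)^{-2L_2}$ does \emph{not} give a convergent $a$-integral over all of $(0,\infty)$. With $|\omega_1|=a|\xi_1|$ and $|\omega_2|\gtrsim\sqrt a(|s|-v)|\xi_1|$ the integrand behaves like $a^{2M-L_2-3/2}$, which is integrable near $a=0$ precisely because $L_2<2M-\tfrac12$ (as you correctly note), but for the very same reason it \emph{diverges} as $a\to\infty$: one would need $L_2>2M-\tfrac12$ there. You only address convergence near $a=0$, and the $(1+|\omega_2|)$ regularization does not rescue the large-$a$ end since $|\omega_2|$ is large there. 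The paper resolves this by splitting the $|s|\geq\Xi$ tail at $a=1$: for $a<1$ it runs exactly your moment-factorization argument (yielding the $O(|\xi|^{-2(L_2-M)})$ term), while for $a>1$ it does \emph{not} extract the moment factor $|\omega_1|^{2M}$ and instead bounds $|\hat\psi(\omega)|^2\lesssim|\omega_2|^{-2L_2}$ directly, so that the $a$-integral becomes $\int_{a>1}a^{-3/2-L_2}da<\infty$ and this piece contributes the harmless $O(|\xi|^{-2L_2})$. Your argument needs this additional split and a separate low-frequency estimate before the claimed conclusion follows; as written, the key integral is infinite.
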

\begin{proof}
By definition we have
\begin{eqnarray*}
	|\hat W(\xi)|^2 &=&  \chi_{\C_{u,v}}(\xi)\big(\int_{a\in\R,\  |s|>\Xi} |\hat \psi \big(a\xi_1, \sqrt{a}(\xi_2 - s\xi_1)\big)|^2 a^{-3/2}da ds \\
	&&+\int_{a>\Gamma,\ |s| < \Xi}|\hat \psi \big(a\xi_1, \sqrt{a}(\xi_2 - s\xi_1)\big)|^2 a^{-3/2}da ds\big).
\end{eqnarray*}
We start by estimating the second integral using the Fourier decay in the first variable:
\begin{eqnarray*}
	\int_{a>\Gamma,\ |s| < \Xi}|\hat \psi \big(a\xi_1, \sqrt{a}(\xi_2 - s\xi_1)\big)|^2 a^{-3/2}da ds
	&\le&2\Xi C \int_{a>\Gamma}(a|\xi_1|)^{-2L_1}a^{-3/2}da \\
	&=& O(|\xi_1|^{-2L_1})=O(|\xi|^{-2L_1})
\end{eqnarray*}
for all $\xi\in\C_{u,v}$.
To estimate the other term we need the moment condition and the decay in the second variable. We write $\hat \psi (\xi)=\xi_1^M\hat\theta(\xi)$
and $\xi=(\xi_1,r\xi_1),\ |r|<v$ for $\xi \in \C_{u,v}$.
We start with the high frequency part:
\begin{eqnarray*}
	\int_{a <1,\ |s| > \Xi}|\hat \psi \big(a\xi_1, \sqrt{a}(\xi_2 - s\xi_1)\big)|^2 a^{-3/2}da ds
	&=&\int_{a <1,\ |s| > \Xi}|a\xi_1|^{2M}|\hat \theta \big(a\xi_1, \sqrt{a}(\xi_2 - s\xi_1)\big)|^2 a^{-3/2}da ds \\
	&\le&C\int_{a <1,\ |s| > \Xi}|a\xi_1|^{2M}|\sqrt{a}(\xi_2 - s\xi_1)|^{-2L_2} a^{-3/2}da ds \\
	&=&C\int_{a <1,\ |s| > \Xi}a^{2M-L_2-3/2}|\xi_1|^{2M-2L_2}|r-s|^{-2L_2}da ds 
\end{eqnarray*}
we have used that $|r-s|$ is always strictly away from zero because $v<\Xi$.
By assumption $L_2=2M-1/2 -\varepsilon$ for some $\varepsilon >0$. 
Hence we can estimate further
\begin{eqnarray*}
	\dots &=&C|\xi_1|^{-2(L_2-M)}\int_{a <1,\ |s| > \Xi}a^{-1+\varepsilon}|r-s|^{-2L_2}da ds=O(|\xi|^{-2(L_2-M)}).
\end{eqnarray*}
The low-frequency part can simply be estimated as follows:
\begin{eqnarray*}
	\int_{a >1,\ |s| > \Xi}|\hat \psi \big(a\xi_1, \sqrt{a}(\xi_2 - s\xi_1)\big)|^2 a^{-3/2}da ds
	&\le& C|\xi_1|^{-2L_2}\int_{a >1,\ |s| > \Xi}a^{-3/2-L_2}|r-s|^{-2L_2}dads\\
	&=&O(|\xi|^{-2L_2}).
\end{eqnarray*}
Putting these estimates together proves the statement.
\end{proof}
\begin{remark}
We find it quite interesting how the smoothness and the moment conditions have to interact in the shearlet transform. This stands in contrast to the wavelet transform where
only smoothness is required to arrive at a statement similar to Lemma \ref{lem:tightwin}. 
Similarily, for wavelets to satisfy a direct theorem they are only required to satisfy moment conditions and essentially no smoothness. Shearlets, on the other hand, need to satisfy moment- and smoothness conditions. Of course, when we speak of smoothness related to wavelets, we mean conventional smoothness and not directional smoothness as measured via the Wavefront Set.
\end{remark}
Using the previous lemma we are finally able to obtain tight frames for $L^2(\C_{u,v})^\lor$.
\begin{theorem}[Representation of $L^2(\C_{u,v})^\lor$-functions, tight, frequency projection]\label{thm:frame3}
With the assumptions of Lemma \ref{lem:tightwin} and $W$ defined as in (\ref{eq:tightwin}), the system
$$
	(P_{\C_{u,v}}\psi_{ast})_{a\in [0,\Gamma],\ s \in [-\Xi,\Xi],\ t\in \R^2}\cup (T_tP_{\C_{u,v}}W)_{t\in \R^2}
$$ 
constitutes a tight frame for $L^2(\C_{u,v})$ with frame constant $C_\psi$.
We have the representation
\begin{eqnarray}\nonumber
	f(x)&=&\frac{1}{C_\psi}\int_{\R^2}\langle f , T_tW\rangle T_tP_{\C_{u,v}}Wdt \\
	&&+\frac{1}{C_\psi}\int_{t\in \R^2}\int_{s\in [-\Xi,\Xi]}\int_{a\in [0,\Gamma]}\mathcal{SH}_{\psi} f (a,s,t)P_{\C_{u,v}}\psi_{ast}(x) a^{-3}da ds dt.
\end{eqnarray}
The window function $W$ satisfies the Fourier decay estimate from Lemma \ref{lem:tightwin}.
\end{theorem}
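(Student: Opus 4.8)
The plan is to combine Lemma~\ref{lem:mult}, Lemma~\ref{lem:tightwin}, and the orthogonality of the relevant subspaces, in essentially the same spirit as the proof of Theorem~\ref{thm:frame1}. First I would recall that, by Lemma~\ref{lem:mult}, the frame operator associated with the system $(P_{\C_{u,v}}\psi_{ast})_{a\in[0,\Gamma],\,s\in[-\Xi,\Xi],\,t\in\R^2}$ is the Fourier multiplier with the symbol $\Delta_{u,v}(\psi)(\xi)$. Likewise, the system $(T_tP_{\C_{u,v}}W)_{t\in\R^2}$ has frame operator given by the Fourier multiplier with $\chi_{\C_{u,v}}(\xi)|\hat W(\xi)|^2$ (the same standard computation used in Theorem~\ref{thm:frame1}). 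Since both systems live in $L^2(\C_{u,v})^\lor$, the frame operator of the union is simply the sum of the two multipliers, i.e.\ the Fourier multiplier with symbol $\chi_{\C_{u,v}}(\xi)\big(\Delta_{u,v}(\psi)(\xi) + |\hat W(\xi)|^2\big)$.

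Next I would invoke the defining relation~\eqref{eq:tightwin}: the window $W$ was chosen precisely so that $\Delta_{u,v}(\psi)(\xi) + |\hat W(\xi)|^2 = C_\psi\,\chi_{\C_{u,v}}(\xi)$, hence the combined frame operator acts on $L^2(\C_{u,v})^\lor$ as multiplication by the constant $C_\psi$. This is exactly the statement that the union is a tight frame for $L^2(\C_{u,v})^\lor$ with frame constant $C_\psi$. One small point to address here is that \eqref{eq:tightwin} only determines $|\hat W(\xi)|^2$, not $\hat W$ itself; but any measurable choice of phase yields an admissible $W$, and since we also need $\Delta_{u,v}(\psi)(\xi)\le C_\psi$ on $\C_{u,v}$ for $|\hat W(\xi)|^2$ to be nonnegative, I would note this follows once $\Gamma,\Xi$ are taken large enough (as in Theorem~\ref{thm:frame}), ensuring $W$ is well-defined. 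The Fourier decay estimate on $W$ is then quoted verbatim from Lemma~\ref{lem:tightwin} under the hypothesis \eqref{eq:tightcom}.

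For the reconstruction formula, I would use property~(iii) of tight frames listed after the definition of continuous frames: for a tight frame with constant $C_\psi$, the canonical dual elements are $\tilde\varphi_x = \frac{1}{C_\psi}\varphi_x$, so the reproducing identity~\eqref{eq:rep} reads $f = \frac{1}{C_\psi}\int \langle f,\varphi_x\rangle\varphi_x\,d\mu(x)$. Writing this out over the index set $\{(a,s,t)\}\sqcup\{t\}$ with the measure $a^{-3}da\,ds\,dt$ on the shearlet part and Lebesgue measure on the window part, and using $\langle f, P_{\C_{u,v}}\psi_{ast}\rangle = \langle P_{\C_{u,v}}f,\psi_{ast}\rangle = \mathcal{SH}_\psi f(a,s,t)$ for $f\in L^2(\C_{u,v})^\lor$, gives precisely the displayed formula. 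As in Theorem~\ref{thm:frame1}, the pointwise validity at points of continuity of $f$ is obtained by convolving with an approximate identity (Gaussian kernels) and passing to the limit.

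I do not expect a genuine obstacle here: the theorem is essentially a bookkeeping corollary of the three ingredients above. The only mildly delicate point is verifying that the hypotheses of Lemma~\ref{lem:tightwin} (in particular the chain of inequalities \eqref{eq:tightcom} together with $\Xi > v$) are compatible with choosing $\Gamma,\Xi$ large enough that $\Delta_{u,v}(\psi)(\xi)\le C_\psi$ holds on all of $\C_{u,v}$, so that $W$ defined by \eqref{eq:tightwin} genuinely exists as a square-integrable function with the asserted decay; this is where one must lean on the frame bounds from Theorem~\ref{thm:frame}.
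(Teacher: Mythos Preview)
Your proposal is correct and follows essentially the same approach as the paper: identify the combined frame operator as the Fourier multiplier with symbol $\Delta_{u,v}(\psi)(\xi)+\chi_{\C_{u,v}}(\xi)|\hat W(\xi)|^2$, invoke \eqref{eq:tightwin} to see this equals $C_\psi\chi_{\C_{u,v}}$, and conclude that the frame operator is $C_\psi$ times the identity on $L^2(\C_{u,v})^\lor$. Your write-up is considerably more detailed than the paper's two-line proof (in particular your remarks on the well-definedness of $W$ and the pointwise reconstruction are useful elaborations), but the underlying argument is the same.
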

\begin{proof}
The frame operator is given as the Fourier multiplier with the function $\Delta_{u,v}(\psi)(\xi) + \chi_{\C_{u,v}}(\xi)|\hat W(\xi)|^2=\chi_{\C_{u,v}}(\xi)C_\psi$.
It follows that the frame operator is given by $C_\psi P_{\C_{u,v}}=C_\psi I$ on $L^2(\C_{u,v})$ ($I$ being the identity).
\end{proof}

\begin{remark}
For applications it is important to restrict the parameters $a,s,t$ to a discrete set. This problem has already been studied in the general framework of continuous frames in \cite{Fournasier2005}.  Let us remark however that we see little hope that discretizing continuous frames will lead to discrete and non-bandlimited \emph{tight} frames.
The reason for this pessimism is that we are not aware of any useful tight frame construction for non-bandlimited wavelets which comes from discretizing the continuous wavelet transform. To our knowledge the only useful and general method to construct wavelet tight frames (or wavelet frames with wavelet duals) comes from Multiresolution Analysis constructions in combination with
the 'unitary extension principle' \cite{ron1997}. We are currently pursuing the goal to generalize this construction to the shearlet setup \cite{Grohs10}.
\end{remark}

\section{An inverse theorem}\label{sec:inverse}
In this section we prove a partial converse to Theorem \ref{thm:direct}. We show that 
if the shearlet coefficients of a function around $(t_0,s_0)$ decay sufficiently fast in $a$, then $(t_0,s_0)$ is a regular directed point. Before we can get to the proof we need some localization results. First we show that a frequency projection on a conical set retains the wavefront set.

\begin{lemma}\label{lem:proj}
The point $(t_0,s_0)$ is an $N$ - regular directed point of $g\in L^2(\R^2)$ if and only if $(t_0,s_0)$ is an $N$ - regular directed point 
of $P_{\C_{u,v}} g$ provided that $s_0 < v$.
\end{lemma}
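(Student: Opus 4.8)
The plan is to prove both directions of the equivalence by analyzing the effect of the Fourier multiplier $\chi_{\C_{u,v}}$ on the localized Fourier transform. Since $P_{\C_{u,v}}g$ has $\widehat{P_{\C_{u,v}}g} = \chi_{\C_{u,v}}\hat g$, the statement really concerns how multiplication by a sharp cutoff to the cone $\C_{u,v}$ interacts with $N$-regularity at the direction $s_0$, which by hypothesis satisfies $|s_0| < v$.

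\textbf{The easy direction.} Suppose $(t_0,s_0)$ is an $N$-regular directed point of $g$. Then there is a cutoff $\Phi\equiv 1$ near $t_0$ with $(\Phi g)^\land(\eta) = O((1+|\eta|)^{-N})$ for $\eta_2/\eta_1\in V_{s_0}$, and by shrinking $V_{s_0}$ we may assume $V_{s_0}\subseteq(-v,v)$. Now $(\Phi P_{\C_{u,v}}g)^\land = \hat\Phi \ast (\chi_{\C_{u,v}}\hat g)$. I would compare this to $(\Phi g)^\land = \hat\Phi\ast\hat g$ and estimate the difference $\hat\Phi\ast((1-\chi_{\C_{u,v}})\hat g)$. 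Evaluating at $te_0$ with $e_0$ a unit vector of slope $\lambda_0\in V_{s_0}$, I split the convolution integral over $|\xi|<\delta t$ and $|\xi|>\delta t$ exactly as in the proof of Lemma \ref{lem:loc}: for $|\xi|<\delta t$ and $t$ large, the point $te_0-\xi$ has slope still inside $(-v,v)$, hence lies outside the complement of $\C_{u,v}$ once $|te_0-\xi|\ge u$, so $(1-\chi_{\C_{u,v}})\hat g$ vanishes there — the near part contributes zero for large $t$. The far part $|\xi|>\delta t$ is handled by Cauchy--Schwarz against $\|g\|_2$ and the rapid decay of $\hat\Phi$, giving $O(|t|^{-N})$. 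Combined with the $N$-regularity of $\Phi g$ itself, this shows $(\Phi P_{\C_{u,v}}g)^\land(te_0)=O(|t|^{-N})$ uniformly for $\lambda_0$ in a neighborhood of $s_0$, i.e. $(t_0,s_0)$ is $N$-regular for $P_{\C_{u,v}}g$.

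\textbf{The converse direction.} Suppose $(t_0,s_0)$ is an $N$-regular directed point of $P_{\C_{u,v}}g$. Write $g = P_{\C_{u,v}}g + (I-P_{\C_{u,v}})g$. The term $(I-P_{\C_{u,v}})g$ has Fourier transform supported in the complement of $\C_{u,v}$, which near the direction $s_0$ (with $|s_0|<v$) only contains the bounded frequency region $|\xi_1|<u$; more precisely, for a slope $\lambda_0$ close to $s_0$ the ray $\{te_0 : t>0\}$ meets $\{\xi\notin\C_{u,v}\}$ only in a compact set (those $\xi$ with $|\xi_1|<u$). So I would argue that $(I-P_{\C_{u,v}})g$ is, microlocally near $(t_0,s_0)$, as regular as we like: its localized Fourier transform $(\Phi(I-P_{\C_{u,v}})g)^\land(te_0) = \hat\Phi\ast((1-\chi_{\C_{u,v}})\hat g)(te_0)$, and since $(1-\chi_{\C_{u,v}})\hat g$ is an $L^2$ function supported where $|\xi_1|<u$, the same near/far splitting as above (near part zero for large $t$, far part $O(|t|^{-N})$ for every $N$ by rapid decay of $\hat\Phi$) gives arbitrarily fast decay along directions near $s_0$. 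Adding the two contributions, $(\Phi g)^\land(te_0) = O(|t|^{-N})$ for $\lambda_0$ near $s_0$, so $(t_0,s_0)$ is $N$-regular for $g$.

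\textbf{Main obstacle.} The one point requiring genuine care is the geometric claim that the condition $|s_0|<v$ guarantees a full neighborhood $V_{s_0}\subseteq(-v,v)$ of slopes all of whose rays escape the complement of $\C_{u,v}$ in finite time (equivalently, that $\{\xi\notin\C_{u,v}, \ \xi_2/\xi_1\in V_{s_0}\}$ is bounded) — this is exactly where the hypothesis is used, and it fails at $|s_0|=v$ where the ray grazes the boundary of the cone. Everything else is a routine repetition of the convolution-splitting argument from Lemma \ref{lem:loc} (cf. Remark \ref{rem:loc} for the extension beyond $L^2$, which I would invoke if $g$ is only a tempered distribution). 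I would write the two directions symmetrically, isolating the shared estimate ``$\hat\Phi\ast h$ decays rapidly along directions where $h$ is supported in a bounded set'' as the common lemma.
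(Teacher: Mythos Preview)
Your proposal is correct and follows essentially the same approach as the paper: both argue via the decomposition $g = P_{\C_{u,v}}g + P_{\C_{u,v}^c}g$, use the geometric fact that for $|s_0|<v$ the Fourier support of $P_{\C_{u,v}^c}g$ meets a small cone of directions around $s_0$ only in a bounded set, and then invoke the near/far convolution splitting of Lemma~\ref{lem:loc}. The only cosmetic difference is that the paper simply cites Lemma~\ref{lem:loc} to pass from the unlocalized decay of $(P_{\C_{u,v}^c}g)^\land$ to that of $(\Phi P_{\C_{u,v}^c}g)^\land$, whereas you reproduce that splitting argument inline.
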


\begin{proof}
This statement is not trivial as it might seem at first glance. We first show the 'only if'-part. Write $P_\C g =g - P_{\C_{u,v}^c} g$, where $P_{\C_{u,v}^c}$ is the orthogonal projection onto the (closure of the) complement of $\C_{u,v}$. By assumption there exists a cutoff function $\Phi$ supported around $t_0$ such that
$$
	(\Phi g )^\land (\xi) = O(|\xi|^{-N})\quad \mbox{for all }\xi_2/ \xi_1 \in (s_0 -\delta ,s_0 + \delta)
$$ 
for some $\delta >0$. Clearly, since $s_0\in (-v,v)$ the point $(t_0 , s_0)$ is an $N$ - regular point of 
$P_{\C^c_{u,v}} g$. Therefore the same estimate as above also holds for $(P_{\C^c_{u,v}} g)^\land$.
The point is now that by Lemma \ref{lem:loc} the same estimate holds also for $( \Phi P_{\C^c_{u,v}} g)^\land$.
It follows that an analogous estimate holds for $(\Phi P_{\C_{u,v}} g)^\land$.
This proves the 'only if'-part. For the proof of the 'if'-part we estimate $( \Phi P_{\C^c_{u,v}} g)^\land$ with the same method as 
in the proof of Lemma \ref{lem:loc} and see that it is negligible for the decay properties of $(\Phi g)^\land$ restricted
to a small cone around the line with slope $s_0$.
\end{proof}

\begin{remark}
We do not know if the above lemma still holds true for $s_0 = v$. 
\end{remark}

We recall the definition of the $N$-th \emph{fractional derivative} of a function $f$ defined by
$$
	I^{(N)}(u) :=\big(\frac{\partial}{\partial u}\big)^N I (u):= \big(\omega^N \hat I(\omega)\big)^\lor (u).
$$
The following lemma states some well-known results for fractional derivatives with $N\notin \N$.

\begin{lemma}\label{lem:frac}
\begin{equation}\label{eq:fracchain} \big( f(\cdot /a) \big)^{(N)}(x) = a^{-N} f^{(N)} (x/a).\end{equation}
\begin{equation}\label{eq:fracprod} \|\big(fg)^{(N)}\|_2\le
C(\|f^{(N)}\|_4\|g\|_4 + \|f\|_4\|g^{(N)}\|_4) \quad N<1.
\end{equation}
\end{lemma}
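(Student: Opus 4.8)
The plan is to treat the two identities separately; both are classical Fourier-analytic facts, so the proof will be short. For \eqref{eq:fracchain}, I would simply unwind the definition of the fractional derivative via the Fourier transform. Writing $g(x) = f(x/a)$, the one-dimensional dilation rule gives $\hat g(\omega) = a\,\hat f(a\omega)$, and hence
\[
	g^{(N)}(x) = \big(\omega^N \hat g(\omega)\big)^\lor(x) = \big(\omega^N a\,\hat f(a\omega)\big)^\lor(x).
\]
Substituting $\eta = a\omega$ inside the inverse Fourier transform converts the factor $\omega^N$ into $a^{-N}\eta^N$ and the remaining $a\,d\omega$ into $d\eta$, so one reads off $g^{(N)}(x) = a^{-N} f^{(N)}(x/a)$, which is \eqref{eq:fracchain}. (One should of course note the usual branch-of-$\omega^N$ convention, i.e. interpreting $\omega^N$ as $|\omega|^N$ times an appropriate sign/phase, so that the two sides are defined consistently; this is the only point needing a word of care.)

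For the product estimate \eqref{eq:fracprod}, the natural tool is a characterization of the fractional Sobolev (Bessel-potential) seminorm of order $N<1$ by a Gagliardo-type double integral,
\[
	\|h^{(N)}\|_2^2 \sim \int_{\R}\int_{\R} \frac{|h(x) - h(y)|^2}{|x-y|^{1+2N}}\,dx\,dy,
\]
valid precisely in the range $0 < N < 1$. Applying this to $h = fg$ and using the pointwise identity
\[
	(fg)(x) - (fg)(y) = f(x)\big(g(x) - g(y)\big) + g(y)\big(f(x) - f(y)\big),
\]
I would split the Gagliardo integral into two pieces, bound $|f(x)|$ and $|g(y)|$ by pulling them out, and then apply Cauchy--Schwarz in the form of Hölder with exponents $(4,4,2)$ — i.e. estimate $\int\int |f(x)|^2 \frac{|g(x)-g(y)|^2}{|x-y|^{1+2N}}$ by $\|f\|_4^2$ times the $\|g^{(N)}\|_4^2$-type quantity via Hölder — to land on $\|f\|_4\|g^{(N)}\|_4 + \|g\|_4\|f^{(N)}\|_4$. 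Alternatively, and perhaps more cleanly, one can invoke the standard fractional Leibniz (Kato--Ponce) inequality $\|D^N(fg)\|_2 \lesssim \|D^N f\|_{p_1}\|g\|_{q_1} + \|f\|_{p_2}\|D^N g\|_{q_2}$ with the Sobolev-admissible choice $p_i = q_i = 4$; this is exactly \eqref{eq:fracprod}.

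The main obstacle is not conceptual but one of bookkeeping: making the $L^4$--$L^4$ Hölder split in the Gagliardo-integral approach produce exactly the stated right-hand side, and in particular justifying that the mixed term arising from the off-diagonal part of $|f(x)-f(y)|\,|g(x)-g(y)|$ is dominated by the two displayed products (this again comes down to Cauchy--Schwarz applied to the two difference-quotient factors). Since the lemma is only invoked later with a fixed $N<1$ and the constant $C$ is immaterial, I would present the Kato--Ponce route as the one-line proof and relegate the Gagliardo computation to a remark, citing a standard reference for both facts rather than reproving the fractional Leibniz rule from scratch.
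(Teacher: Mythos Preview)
Your proposal is correct and matches the paper's own treatment: the paper dismisses \eqref{eq:fracchain} as ``an easy exercise'' (your Fourier-side dilation computation is exactly that exercise) and handles \eqref{eq:fracprod} by citing the fractional product rule from \cite{F1991}, which is precisely the Kato--Ponce route you recommend. Your Gagliardo-integral alternative is extra detail the paper does not attempt; note, though, that making the $L^4$--$L^4$ split work there would require the $L^4$-based Besov/Gagliardo seminorm rather than the $L^2$ one you wrote, so the clean path really is the direct citation you settle on.
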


\begin{proof}
The first equation (\ref{eq:fracchain}) is an easy exercise while equation (\ref{eq:fracprod})
is a special case of the so-called "fractional product rule", cf. \cite{F1991}.
\end{proof}

The following lemma shows that in studying the regularity properties of $f$ around $t_0$ only the shearlet coefficients of $f$ around $t_0$ are relevant. We only formulate and prove it for $N\in \N$, the general case can
be shown using Lemma \ref{lem:frac}. 

\begin{lemma}\label{lem:coeffloc}
Let $f\in L^2(\R^2)$, $\Phi$ be a smooth bump function supported in a small neighbourhood $V(t_0)$ of
some $t_0\in\R^2$ and let $U(t_0)$ be another neighbourhood of $t_0$ with $V(t_0)+\delta B\subset U(t_0)$ for some $\delta >0$. Here, $B$ denotes the unit disc in $\R^2$ and $+$ denotes
the Minkowski sum of two sets.
Consider the function 
\begin{equation}
	g(x) = \int_{t\in U(t_0)^c, s\in [-\Xi,\Xi], a \in [0,\Gamma]} \langle f  , \tilde \psi_{ast}\rangle
	\Phi(x)\psi_{ast}(x)a^{-3}da ds dt.
\end{equation}
Then for all $u,v$
\begin{equation}\label{eq:locdec}
	\hat g(\xi)  = O(|\xi|^{-N})\quad |\xi|\to\infty,\quad \mbox{for }\xi \in \C_{u,v}
\end{equation}
 provided that
\begin{equation}\label{eq:loc2}
	\theta^j(x):=\big(\frac{\partial}{\partial x_1}\big)^{j}\psi(x) = O(|x|^{-P_j})\quad \mbox{ with }P_j/2 -3/4> j + 2,\ j=0,\dots , N.
\end{equation}
\end{lemma}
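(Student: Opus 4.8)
The plan is to show that $\partial_{x_1}^{j}g\in L^{1}(\R^{2})$ for $j=0,1,\dots,N$, which immediately yields (\ref{eq:locdec}). Indeed, since $\Phi$ is a compactly supported bump function, $g=\Phi\cdot h$ is compactly supported, so $\hat g$ is bounded; moreover $(2\pi)^{N}|\xi_{1}|^{N}|\hat g(\xi)|=|(\partial_{x_1}^{N}g)^{\land}(\xi)|\le\|\partial_{x_1}^{N}g\|_{1}$, whence $|\hat g(\xi)|\le C(1+|\xi_{1}|)^{-N}$ for all $\xi\in\R^{2}$. On $\C_{u,v}$ one has $|\xi|^{2}=\xi_1^2+\xi_2^2\le(1+v^{2})\xi_1^2$, so as $|\xi|\to\infty$ within $\C_{u,v}$ also $|\xi_{1}|\to\infty$ and $(1+|\xi_{1}|)^{-N}\le(1+v^{2})^{N/2}|\xi|^{-N}$, which is exactly (\ref{eq:locdec}). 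Note it is $\partial_{x_1}$ (not $\partial_{x_2}$) that is relevant, because $|\xi_1|\sim|\xi|$ on the cone; this is consistent with the hypothesis (\ref{eq:loc2}), which is phrased in terms of $\theta^{j}=\partial_{x_1}^{j}\psi$.

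Write $h(x)=\int_{t\in U(t_0)^{c},\,|s|\le\Xi,\,a\le\Gamma}\langle f,\tilde\psi_{ast}\rangle\,\psi_{ast}(x)\,a^{-3}\,da\,ds\,dt$, so $g=\Phi h$. By the Leibniz rule and the boundedness of $\Phi$ and its derivatives, it suffices to bound $\partial_{x_1}^{k}h$ uniformly on the support $V(t_0)$ of $\Phi$ for each $k\le N$; then each $\partial_{x_1}^{j}g$ is bounded with compact support, hence in $L^{1}$. Differentiating under the integral sign (legitimate because the bounds obtained below, and their analogues for lower-order derivatives, are integrable in $(a,s,t)$ uniformly for $x$ in a neighbourhood of $\overline{V(t_0)}$) and using $\partial_{x_1}^{k}\psi_{ast}(x)=a^{-3/4-k}\,\theta^{k}(y_{1},y_{2})$ with $y_{1}=\tfrac{(x_1-t_1)+s(x_2-t_2)}{a}$, $y_{2}=\tfrac{x_2-t_2}{a^{1/2}}$, we get
\[
\partial_{x_1}^{k}h(x)=\int_{t\in U(t_0)^{c},\,|s|\le\Xi,\,a\le\Gamma}\langle f,\tilde\psi_{ast}\rangle\,a^{-3/4-k}\,\theta^{k}(y_{1},y_{2})\,a^{-3}\,da\,ds\,dt .
\]

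Now two ingredients. First, since $(\psi_{ast})$ (resp. its frequency-projected version) is a frame with lower bound $A>0$, the canonical dual satisfies $\|\tilde\psi_{ast}\|_{2}=\|\mathcal S^{-1}\psi_{ast}\|_{2}\le A^{-1}\|\psi\|_{2}$, so by Cauchy--Schwarz $|\langle f,\tilde\psi_{ast}\rangle|\le A^{-1}\|f\|_{2}\|\psi\|_{2}$ uniformly in $(a,s,t)$. Second, for $x\in V(t_0)$ and $t\in U(t_0)^{c}$ we have $|x-t|\ge\delta$ (because $V(t_0)+\delta B\subset U(t_0)$), and, exactly as in the proof of Theorem \ref{thm:direct}, $|(y_{1},y_{2})|\ge c_{\Xi}\,a^{-1/2}|x-t|$ for $|s|\le\Xi$ and $a\le\Gamma$, with $c_{\Xi}>0$ depending only on $\Xi$ (for $a$ in the bounded range the $a$-power is replaced by a constant, which is harmless). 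Hence (\ref{eq:loc2}) gives $|\theta^{k}(y_{1},y_{2})|\le C(1+c_{\Xi}a^{-1/2}|x-t|)^{-P_{k}}\le C' a^{P_{k}/2}|x-t|^{-P_{k}}$, and therefore
\[
|\partial_{x_1}^{k}h(x)|\le C''\int_{U(t_0)^{c}}|x-t|^{-P_{k}}\,dt\cdot\int_{-\Xi}^{\Xi}ds\cdot\int_{0}^{\Gamma}a^{P_{k}/2-k-15/4}\,da .
\]
The $s$-integral is trivially finite; the $t$-integral equals $\int_{|z|\ge\delta}|z|^{-P_{k}}\,dz<\infty$ since $P_{k}>2$ (a consequence of (\ref{eq:loc2})); and the $a$-integral converges at $0$ precisely because $P_{k}/2-k-15/4>-1$, i.e. $P_{k}/2-3/4>k+2$, which is the hypothesis (\ref{eq:loc2}). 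The resulting bound is uniform over $x\in V(t_0)$, which completes the argument.

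\textbf{Main obstacle.} The delicate point is the $a$-integral near $a=0$: differentiating the anisotropically dilated shearlet $k$ times in $x_{1}$ produces the factor $a^{-3/4-k}$, which together with the Haar density $a^{-3}$ is badly non-integrable at $0$; the only compensating gain is the factor $a^{P_{k}/2}$ extracted from the polynomial spatial decay of $\theta^{k}$ combined with the fixed separation $|x-t|\ge\delta$ between $\mbox{supp}\,\Phi$ and $U(t_0)^{c}$. The threshold $P_{j}/2-3/4>j+2$ is exactly what forces the exponent $P_{k}/2-k-15/4$ to exceed $-1$ (and, as a by-product, $P_{k}>2$, which handles integration over the unbounded set $U(t_0)^{c}$). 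The remaining points — the uniform bound on the dual coefficients via the frame inequality, the support-separation estimate, and differentiation under the integral — are routine.
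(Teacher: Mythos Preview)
Your argument is correct and is essentially the paper's own proof: both apply the Leibniz rule to $\Phi\cdot\psi_{ast}$, invoke the pointwise bound $|\theta^{j}_{ast}(x)|=O(a^{-3/4+P_j/2}|x-t|^{-P_j})$ on the region $|x-t|\ge\delta$, and identify $P_j/2-3/4>j+2$ as precisely the threshold making the resulting $a$-integral against the density $a^{-3}da$ converge at $0$. The only superficial difference is packaging --- the paper routes the computation through the Radon transform and the projection slice theorem (showing $\partial_u^{N}\mathcal{R}g(\cdot,s)\in L^{1}(\R)$), whereas you bound $\partial_{x_1}^{N}g\in L^{1}(\R^{2})$ directly and use $|\xi_1|\sim|\xi|$ on $\C_{u,v}$ --- but since $\partial_u^{N}\mathcal{R}g=\mathcal{R}(\partial_{x_1}^{N}g)$ these are the same estimate.
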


\begin{proof}
Consider the Radon transform 
$$
	I(u):= \mathcal{R}g(u,s) = \int_\R g(u - sx_2 , x_2)dx_2,\quad |s|\le v.
$$
We will show that $I^{(N)}\in L^1(\R)$ which implies that 
$$
	(I^{(N)})^\land(\omega) = \omega^N \hat I(\omega) \in L^\infty(\R)
$$
which by the projection slice theorem implies that with $\xi = \omega(1,s)$
$$
	|\hat g(\xi)| = |\hat I(\omega)|\le \|I^{(N)}\|_{L_1(\R)}|\omega|^{-N} \le \|I^{(N)}\|_{L_1(\R)}\sqrt{1+s^2}|\xi|^{-N}
$$
which proves the statement.
We now show that $I^{(N)} \in L^1(\R)$.
In fact, since $I$ is of compact support we only need to show that $I^{(N)}$ is bounded.
\begin{eqnarray}\nonumber
	I^{(N)}(u) &=& \int_{ast}\langle f , \tilde \psi_{ast}\rangle \big(\frac{\partial}{\partial u}\big)^{N}\int_\R
	\Phi(u-sx,x)\psi_{ast}(u-sx,x)dx d\mu(ast)\\ \nonumber
	&=&
	\sum_{j=0}^N \binom{N}{j} \int_{ast}\langle f , \tilde \psi_{ast}\rangle \int_\R
	\big(\frac{\partial}{\partial u}\big)^{N-j}\Phi(u-sx,x)\big(\frac{\partial}{\partial u}\big)^{j}\psi_{ast}(u-sx,x)dx d\mu(ast)\\ 			\label{eq:loc1}
	&=&
	\sum_{j=0}^N \binom{N}{j} \int_{ast}\langle f , \tilde \psi_{ast}\rangle a^{-j}\int_\R
	\big(\frac{\partial}{\partial x_1}\big)^{N-j}\Phi(u-sx,x)\theta^j_{ast}(u-sx,x)dx d\mu(ast),
\end{eqnarray}
where $\theta^j = \big(\frac{\partial}{\partial x_1}\big)^{j}\psi$.
Argueing as in the localization part at the beginning of the proof of Theorem \ref{thm:direct}, we can show
that $|\theta^j_{ast}(x)|= O(a^{-3/4 +P_j/2}|x-t|^{-P_j})$. Since $\big(\frac{\partial}{\partial x_1}\big)^{N-j}\Phi \theta^j$ has small support around $t_0$ and the parameter
$t$ varies in a set which stays away from the support $V(t_0)$ of $\big(\frac{\partial}{\partial x_1}\big)^{N-j}\Phi\theta^j$
we can estimate
$$
	|\big(\frac{\partial}{\partial x_1}\big)^{N-j}\Phi(x)\theta^j_{ast}(x)|= O(|\big(\frac{\partial}{\partial x_1}\big)^{N-j}\Phi(x)|a^{-3/4 +P_j/2}|t-t_0|^{-P_j})
$$
for $t\in U(t_0)^c$.
Putting this estimate into (\ref{eq:loc1}) and using (\ref{eq:loc2}) we arrive at the desired statement.
\end{proof}

We are finally in a position to prove the main result of this section, namely an inverse theorem. Again, we
only formulate and prove it for $N\in \N$, the extension to arbitrary $N$ can be achieved via Lemma \ref{lem:frac}.

\begin{theorem}[Inverse Theorem] \label{thm:inverse}Let $f\in L^2(\C_{u,v})^\lor$, $\infty > u,v>0$.
Assume that there exist neighborhoods $U(t_0)\subset \R^2$ of $t_0$ and $(s_0 - \varepsilon , s_0 + \varepsilon)\subset [-s_0 , s_0]$ of $s_0$ such that 
\begin{equation}\label{eq:invass}
	\mathcal{SH}_\psi f (a , s, t) = O(a^{K}) \quad \mbox{for all } ( s,t)\in (s_0 - \varepsilon , s_0 + \varepsilon)\times U(t_0)
\end{equation} 
with the implied constant uniform over $s$ and $t$. Then $(t_0,s_0)$ is an $N$- regular directed point of $f$
for all $N$ with (\ref{eq:loc2}) such that $\psi\in H_{(N,L)}(\R^2)$, $\hat\theta^j$,  $\omega_1^{-M}\hat\psi(\omega)$, $\big(\frac{\partial^L}{\partial x_2^L}\theta^j\big)^\land \in L^1(\R^2),\ j=0,\dots , N$ 
and for some $1/2 < \alpha < 1$
\begin{equation}\label{eq:conditions}
	N+2 < \min \big( K-3/4,(1-\alpha)(M+N)-3/4, (\alpha - 1/2)L-3/4,2(L_2-M+1),2(L_1+1)),
\end{equation}
where $M>1$ is the number of anisotropic moments of $\psi$, $L$ is the Fourier decay of $\psi$ in the second coordinate and
$L_1, L_2 $ are defined as in Lemma \ref{lem:tightwin} such that (\ref{eq:tightcom}) holds.
\end{theorem}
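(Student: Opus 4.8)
The plan is to localise $f$ near $t_0$, to expand the localised function by one of the shearlet frame representations of Section~\ref{sec:frame}, and to read off the directional regularity at $s_0$ from the univariate smoothness of the Radon transform of that expansion, as suggested by the Projection Slice Theorem. Fix once and for all a smooth cutoff $\Phi\equiv1$ on a small ball around $t_0$ with $\operatorname{supp}\Phi\subset V(t_0)$ and $V(t_0)+\delta B\subset U(t_0)$ as in Lemma~\ref{lem:coeffloc} (shrinking the given $U(t_0)$ if necessary). Since $\Phi f$ has compact support, the Projection Slice Theorem reduces the statement to showing $\partial_u^N\big(\mathcal{R}(\Phi f)(\cdot,s')\big)\in L^\infty(\R)$ uniformly for $s'$ in a neighbourhood of $s_0$. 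Expanding $f$ by (say) Theorem~\ref{thm:frame3} for $f\in L^2(\C_{u,v})^\lor$ --- or by Theorem~\ref{thm:frame2}, whose dual elements are $\psi_{ast}$ resp.\ $T_tW$ passed through a smooth Fourier multiplier and hence still have good spatial decay --- the function $\Phi f$ decomposes into a band-limited window term, possibly a term built from $\psi^\nu$, and the main term $\Phi\int\mathcal{SH}_\psi f(a,s,t)\,(\,\cdot\,)\,a^{-3}\,da\,ds\,dt$; in the last we split the $t$-integral into $t\in U(t_0)^c$ and $t\in U(t_0)$, and inside the latter the $s$-integral into $s\in(s_0-\varepsilon,s_0+\varepsilon)$ and $s\in[-\Xi,\Xi]\setminus(s_0-\varepsilon,s_0+\varepsilon)$.

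The window term is band-limited, hence smooth, so after multiplication by the bump $\Phi$ its Fourier transform decays faster than any polynomial; a possible $\psi^\nu$-term analyses only directions complementary to those of $\C_{u,v}$, hence away from slope $s_0$, and is handled by a transversality estimate like the one below. The $t\in U(t_0)^c$-part of the main term is of the form covered by Lemma~\ref{lem:coeffloc}: the assumed $L^1$-integrability of $\hat\theta^j$ and $\big(\frac{\partial^L}{\partial x_2^L}\theta^j\big)^\land$ together with the two inequalities $N+2<2(L_1+1)$ and $N+2<2(L_2-M+1)$ of \eqref{eq:conditions} are exactly what guarantees that $\theta^j=\big(\frac{\partial}{\partial x_1}\big)^j\psi$ (and its dual) decays like $O(|x|^{-P_j})$ with $P_j/2-3/4>j+2$ for $j=0,\dots,N$, i.e.\ condition \eqref{eq:loc2}; Lemma~\ref{lem:coeffloc} then gives Fourier decay $O(|\xi|^{-N})$ on $\C_{u,v}$, in particular near direction $s_0$.

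For the piece with $t\in U(t_0)$ and $s\in(s_0-\varepsilon,s_0+\varepsilon)$ I invoke the hypothesis \eqref{eq:invass}. Differentiating $\mathcal{R}(\Phi\psi_{ast})(u,s')$ in $u$, applying the Leibniz rule and the scaling identity $\big(\frac{\partial}{\partial x_1}\big)^j\psi_{ast}=a^{-j}\theta^j_{ast}$ (exactly as in \eqref{eq:loc1}), the $N$-th $u$-derivative of this piece becomes a finite sum over $j\le N$ of integrals of $\mathcal{SH}_\psi f(a,s,t)\,a^{-j}\int_\R\big(\frac{\partial}{\partial x_1}\big)^{N-j}\Phi(u-s'x_2,x_2)\,\theta^j_{ast}(u-s'x_2,x_2)\,dx_2$ against $a^{-3}\,da\,ds\,dt$. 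Here $|\mathcal{SH}_\psi f(a,s,t)|\le Ca^K$ on the relevant range, while the spatial decay of $\theta^j$ gives $\int_\R|\theta^j_{ast}(u-s'x_2,x_2)|\,dx_2=O(a^{-1/4})$ uniformly over the compact ranges of $s,s',t$; thus the $a$-integral is dominated by $\int_0^\Gamma a^{K-j-13/4}\,da$, which converges for every $j\le N$ because $K-3/4>N+2$. This piece is therefore $C^N$ near $s_0$, uniformly.

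The remaining piece --- $t\in U(t_0)$, $s$ bounded away from $s_0$ --- is the heart of the matter, because there only the trivial bound $|\mathcal{SH}_\psi f(a,s,t)|\le\|f\|_2\|\psi\|_2$ is available and the $a$-decay that makes the $a^{-3}\,da$-integral converge must be produced entirely by cancellation. The point is that in $\mathcal{R}(\Phi\psi_{ast})(u,s')$ the integration line runs \emph{transversally} to the shear direction $s$ because $|s-s'|\ge\varepsilon/2$: after the substitution making the first argument of $\theta^j$ sweep an interval of length $\sim|s-s'|^{-1}a^{-1}$, the surviving factors are slowly varying --- the cutoff $\big(\frac{\partial}{\partial x_1}\big)^{N-j}\Phi$ varies on the scale $a/|s-s'|$ and the second argument of $\theta^j$ on the scale $a^{1/2}/|s-s'|$ --- so one Taylor-subtracts them and brings in the $M$ anisotropic moments of $\theta^j$ in its first variable together with the $x_2$-smoothness $\psi\in H_{(N,L)}$ and the spatial decay $P_j$. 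Splitting the swept variable at $\sim a^{-\alpha}$, precisely as the Direct Theorem (Theorem~\ref{thm:direct}) splits the frequency integral at $|\xi_1|=a^{-\alpha}$, and optimising over $\alpha\in(1/2,1)$, these gains combine into a net power of $a$ making $\int_0^\Gamma(\cdots)a^{-3}\,da$ converge precisely under the remaining inequalities $N+2<(1-\alpha)(M+N)-3/4$ and $N+2<(\alpha-1/2)L-3/4$ of \eqref{eq:conditions}. Collecting the four contributions gives $\widehat{\Phi f}(\xi)=O(|\xi|^{-N})$ for $\xi$ in a cone around the direction of slope $s_0$, i.e.\ $(t_0,s_0)$ is an $N$-regular directed point of $f$. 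The main obstacle is exactly this last estimate --- extracting the right amount of decay in $a$ from the directional mismatch between $s_0$ and $s$, balancing the moment count $M$, the $x_2$-smoothness $L$ and the spatial decay, with $\alpha$ providing the interpolation.
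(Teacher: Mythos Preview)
Your overall architecture --- localise by $\Phi$, expand via a shearlet frame, pass to the Radon transform, and bound $\partial_u^N$ by splitting the $(a,s,t)$-integral --- matches the paper's proof. Two points, however, deserve correction.

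\textbf{The window term is not band-limited.} You assert that the window contribution is band-limited and hence infinitely smooth. In the tight-frame representation of Theorem~\ref{thm:frame3} (which the paper uses, on the cone $\C_{u,v+\kappa}$ with $\kappa>0$ so that Lemma~\ref{lem:proj} applies), the window $W$ is determined by \eqref{eq:tightwin} and has only the polynomial Fourier decay of Lemma~\ref{lem:tightwin}, namely $|\hat W(\xi)|^2=O(|\xi|^{-2\min(L_1,L_2-M)})$. The two inequalities $N+2<2(L_1+1)$ and $N+2<2(L_2-M+1)$ in \eqref{eq:conditions} are there precisely to make the window piece $\int\langle f,T_tW\rangle T_tW\,dt$ an $N$-regular contribution; they have nothing to do with the spatial decay \eqref{eq:loc2}, which is a separate hypothesis on $\psi$. (There is also no $\psi^\nu$-term here: since $f\in L^2(\C_{u,v})^\lor$, the frame of Theorem~\ref{thm:frame3} involves only $\psi$ and $W$.)

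\textbf{The far-$s$ piece: the paper's shortcut.} For $t\in U(t_0)$ and $s\notin(s_0-\varepsilon,s_0+\varepsilon)$ you propose a direct spatial Taylor/moment argument, splitting at $a^{-\alpha}$. The paper avoids this computation entirely by observing that
\[
\mathcal{R}\big(\Phi\,\theta^j_{ast}\big)(u,s_0)=\int_\R \Phi\,\theta^j_{ast}(u-s_0x_2,x_2)\,dx_2
=\mathcal{SH}_{\theta^j}\!\big(\Phi\,\delta_{x_1+s_0x_2-u}\big)(a,s,t),
\]
i.e.\ the Radon transform of a shearlet is itself the shearlet coefficient of a (localised) line distribution. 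Since $(t,s)$ with $s\neq s_0$ is an $R$-regular directed point of $\delta_{x_1+s_0x_2-u}$ for every $R$, the Direct Theorem~\ref{thm:direct} (in the version of the appendix, with $M+N$ moments for $\theta^N$) immediately gives the decay $O\big(a^{(1-\alpha)(M+N)-3/4}+a^{(\alpha-1/2)L-3/4}\big)$, which combined with $a^{-N-3}$ is integrable exactly under your two remaining inequalities. Your sketched transversal argument is the spatial shadow of that Fourier-side estimate and could in principle be pushed through, but the identification above is the intended mechanism and removes the need for any new analysis.
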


\begin{proof}
Choose $\Gamma , \Xi$ such that the system 
$$
	(P_{\C_{u,v+\kappa}} \psi_{ast})_{a\in [0,\Gamma],\ s \in [-\Xi,\Xi],\ t\in \R^2}\cup (T_tP_{\C_{u,v+\kappa}} W)_{t\in \R^2}
$$
constitutes a tight frame for $L^2(\C_{u,v+\kappa})$ and $v+\kappa >s_0$, with $W$ chosen according to Lemma \ref{lem:tightwin}.
The goal is to prove that for a localized version $\tilde f$ of 
\begin{equation}\label{eq:recinv}
	g = \int_{t\in \R^2 , \ s\in (-\Xi,\Xi) ,\ a\in (0,\Gamma)} \langle f , \psi_{ast}\rangle \psi_{ast} a^{-3}da ds dt
\end{equation}
 around $t_0$ the Fourier transform
of the function $I(u):=\mathcal{R}\tilde f (u,s_0)$ decays of order $|\omega|^{-N}$ for $|\omega |\to \infty$.
This would prove (by the projection slice theorem) that $(t_0,s_0)$ is a regular directed point of $g$.
To show that this already implies that $(t_0,s_0)$ is a regular directed point of $f$, we argue as follows:
By Theorem \ref{thm:frame3} we have the representation 
$$
	f = \frac{1}{C_\psi}P_{\C_{u,v+\kappa}}(g + \int_{t\in \R^2}\langle f,T_tW\rangle T_tWdt)
$$
It follows from Lemma \ref{lem:proj} that $(t_0,s_0)$ is an $N$-regular directed point of $f$
if it is an $N$-regular directed point of $g + \int_{t\in \R^2}\langle f,T_tW\rangle T_tWdt$.
By Lemma \ref{lem:tightwin} and (\ref{eq:conditions}), $(t_0,s_0)$ is an $N$-regular point of  $\int_{t\in \R^2}\langle f,T_tW\rangle T_tWdt$,
and therefore we only need to verify regularity for $g$, which we will now do.

First note that by Lemma \ref{lem:coeffloc} we can without loss of generality restrict the parameter $t$ in the integral (\ref{eq:recinv})
to  $U(t_0)$ if we multiply by a suitable cutoff function $\Phi$.
Therefore we need to study the regularity properties of 
$$
	\tilde f =\int_{t\in U(t_0) , \ s\in (-\Xi,\Xi) ,\ a\in (0,\Gamma)} \langle f , \psi_{ast}\rangle \Phi\psi_{ast} a^{-3}da ds dt,
$$
where $\Phi$ is supported in a small neighbourhood $V_0(t_0)\subset \subset U(t_0)$ around $t_0$.
Let us denote by $I(u)$ the function 
$$ 
	I(u):=\mathcal{R}\tilde f (u,s_0) 
$$
with
$$
	\mathcal{R}\tilde f (u,s_0)=\int_{t\in U(t_0) , \ s\in (-\Xi,\Xi) ,\ a\in (0,\Gamma)} \langle f , \psi_{ast}\rangle \mathcal{R}\Phi\psi_{ast}(u,s_0) a^{-3}da ds dt,
$$
and
$$
	\mathcal{R}\Phi\psi_{ast}(u,s_0) = a^{-3/4}\int_\R\Phi(u-s_0x_2,x_2) \psi\big( \frac{u - s_0 x_2 - t_1 + s(x_2 - t_2)}{a} , 	\frac{x_2}{a^{1/2}}\big) d x_2.
$$
To prove our goal that $\hat I(\omega) = O(|\omega |^{-N})$ we need to show that $\omega^N \hat I(\omega) \in L^\infty(\R)$ or the stronger statement that the fractional derivative $I^{(N)}$ of $I$ defined by
$$
	I^{(N)}(u) :=\big(\frac{\partial}{\partial u}\big)^N I (u):= \big(\omega^N \hat I(\omega)\big)^\lor (u)
$$
is in $L^1(\R)$. 

Unless stated otherwise in what follows the variables $a,s,t$ are allowed to vary over the sets
$[0,\Gamma]$, $[-\Xi,\Xi]$, $U(t_0)$, respectively.

By the usual product rule and the definition of $\mathcal{R}$ the quantity $\|I^{(N)}\|_1$ can be estimated by

$$
	C\max_{j=0,\dots,N}\int_\R\int_{a,s,t}| \langle f , \psi_{ast}\rangle| a^{-j}|\mathcal{R}\big(\frac{\partial^{N-j}}{\partial 		x_1^{N-j}}\Phi \frac{\partial^j}{\partial x_1^j}\psi_{ast}\big)(u,s_0)|d\mu(a,s,t)du
$$
We only treat the case $j=N$, the other cases can be dealt with analogously.

Assume that the function $\Phi$ is zero outside a cube of sidelength $\eta$ around $t_0$.
Then it is easy to see that the support of $I^N$ is contained in the interval
$I_U = [(t_0)_1 -s_0(t_0)_2 -2\eta , (t_0)_1 -s_0(t_0)_2 -2\eta]$ and the integration variable 
$x_2$ from the definition of $\mathcal{R}$ can be restricted to the interval
$I_X=[(t_0)_2-\eta,(t_0)_2+\eta]$, where $t_0 = ((t_0)_1,(t_0)_2)$.
We now separate the quantity 
$$
	\int_{I_U}\int_{a,s,t}| \langle f ,  \psi_{ast}\rangle| a^{-N}|\mathcal{R}\big(\Phi \frac{\partial^N}{\partial 	x_1^N}\psi_{ast}\big)(u,s_0)|d\mu(a,s,t)du =:A+B
$$
with 
$$
	A=\int_{I_U}\int_{a,s\in (s_0-\varepsilon,s_0+\varepsilon),t}| \langle f , \psi_{ast}\rangle| a^{-N}|\mathcal{R}\big(\Phi 	\frac{\partial^N}{\partial x_1^N}\psi_{ast}\big)(u,s_0)|d		\mu(a,s,t)du
$$
and
$$
	B=\int_{I_U}\int_{a,s\in [-\Xi,\Xi]\setminus (s_0-\varepsilon,s_0+\varepsilon),t}| \langle f ,  \psi_{ast}\rangle| 	a^{-N}|\mathcal{R}\big(\Phi \frac{\partial^N}{\partial x_1^N}		\psi_{ast}\big)(u,s_0)|d\mu(a,s,t)du.
$$
In order to estimate $B$ we note that 
$$
	\mathcal{R}\big(\Phi \frac{\partial^N}{\partial x_1^N}\psi_{ast}\big)(u,s_0)=\int_{I_X} \Phi\frac{\partial}{\partial x_1^N}\psi_{ast}(u-s_0x_2,x_2) d x_2 = \mathcal{SH}_\theta \Phi	\delta_{x_1 + s_0x_2 - u}(a,s,t),
$$
which means the shearlet transform of the delta distribution concentrated on the line 
$x_1 + s_0x_2 - u=0$ and localized by $\Phi$ w.r.p. to the shearlet $\theta = \frac{\partial^{N}}{\partial x_1^N}\psi$ with
$M+N$ moments.
It is well known and easy to see that for $s\in (-\Xi,\Xi)\setminus [s_0 - \varepsilon, s_0 + \varepsilon]$, the point $(t,s)$ is an $R$-regular directed point of $\delta_{x_1 + s_0x_2 - u}$ for all $R\in \mathbb{N}$ (in other words $(t,s)$ is in the \emph{analytic Wavefront Set of $\delta_{x_1 + s_0x_2 - u}$}), hence of $\Phi\delta_{x_1 + s_0x_2 - u}$ by Remark \ref{rem:loc}.
By using the same arguments as in the proof of Theorem \ref{thm:direct} we see that
for any $1/2<\alpha <1$ the estimate
\begin{equation}\label{eq:diracdec}
	\mathcal{SH}_\theta\Phi \delta_{x_1 + s_0x_2 - u}(a,s,t) = O\big(a^{(1-\alpha)(M+N) -3/4}+ a^{-3/4 + (\alpha - 1/2)L}\big)
\end{equation}
holds with the implied constant uniform over 
$t\in U(t_0) , \ s\in (-\Xi,\Xi)\setminus [s_0 - \varepsilon, s_0 + \varepsilon]$. The details are given in the appendix.
Since by assumption there exists $1/2<\alpha<1$ such that 
$$
	N+2 < \min ((1-\alpha)(M+N)-3/4, (\alpha - 1/2)L-3/4),
$$
the expression $B$ is bounded.

In order to estimate $A$ we use the fast decay of the shearlet coefficients of $f$ around $(t_0,s_0)$. By our assumptions on $N,\ K$, the coefficients 
$\langle f ,  \psi_{ast}\rangle$ decay of order greater than
$a^{N+2+3/4}$, and therefore $A$ is bounded.
\end{proof}

Following this rather technical theorem we state an informal version of the inverse theorem:

\begin{theorem}[inverse theorem, informal version]\label{thm:invinf}
Assume that $\psi$ has sufficiently many vanishing moments in the $x_1$ direction, is sufficiently smooth and sufficiently well-localized in space.
Assume that (\ref{eq:invass}) holds for some $f$. Then $(t_0,s_0)$ is an $N$-regular directed point of $f$ for all $N< K-11/4$.
\end{theorem}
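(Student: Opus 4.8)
The plan is to derive the informal statement as a direct corollary of the precise Inverse Theorem (Theorem \ref{thm:inverse}) by showing that, once $\psi$ is assumed sufficiently nice (Schwartz-like: infinitely many anisotropic vanishing moments, $C^\infty$, rapid spatial decay), all the structural hypotheses listed in Theorem \ref{thm:inverse} can be met, and the numerical side-condition (\ref{eq:conditions}) reduces to the single inequality $N < K - 11/4$.

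First I would fix a target regularity $N$ with $N < K - 11/4$ and then observe that the integrability/smoothness hypotheses — $\psi\in H_{(N,L)}(\R^2)$, and $\hat\theta^j$, $\omega_1^{-M}\hat\psi(\omega)$, $(\partial_{x_2}^L\theta^j)^\land\in L^1(\R^2)$ for $j=0,\dots,N$, as well as the spatial decay condition (\ref{eq:loc2}) with $P_j/2 - 3/4 > j+2$ — are all automatically satisfied when $\psi$ is a Schwartz function with infinitely many vanishing moments in the $x_1$-direction, because then $\theta^j=\partial_{x_1}^j\psi$ is again Schwartz, $\hat\psi$ is Schwartz and vanishes to infinite order on $\{\omega_1=0\}$ (so $\omega_1^{-M}\hat\psi$ is still Schwartz), and $L$, $M$, $L_1$, $L_2$ may be taken as large as we wish. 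The only genuine content is then to check that (\ref{eq:conditions}) can be satisfied: we need some $1/2<\alpha<1$ with
$$
	N+2 < \min\big(K-3/4,\ (1-\alpha)(M+N)-3/4,\ (\alpha-1/2)L-3/4,\ 2(L_2-M+1),\ 2(L_1+1)\big).
$$

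The key step is the book-keeping of which terms depend on the parameters we are free to enlarge. The first term, $K - 3/4$, is fixed by the hypothesis on $f$ and forces $N+2 < K - 3/4$, i.e. $N < K - 11/4$ — this is exactly the stated range. The remaining four terms are all of the form $(\text{large free parameter})\times(\text{positive constant}) - \text{fixed}$, where the free parameters $M$, $L$, $L_1$, $L_2$ can be chosen arbitrarily large (subject only to the compatibility chain $2M-1/2 > L_2 > M > 1/2$ from (\ref{eq:tightcom}), which is preserved under scaling $M, L_2\to\infty$): having picked $\alpha\in(1/2,1)$ first — say $\alpha = 3/4$ — the terms $(1-\alpha)(M+N)-3/4$ and $(\alpha-1/2)L-3/4$ and $2(L_2-M+1)$ and $2(L_1+1)$ all tend to $+\infty$ as $M,L,L_1,L_2\to\infty$, so for $\psi$ sufficiently smooth and with sufficiently many vanishing moments every one of them exceeds $N+2$. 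Hence all hypotheses of Theorem \ref{thm:inverse} hold and we conclude that $(t_0,s_0)$ is an $N$-regular directed point of $f$; since $N<K-11/4$ was arbitrary, this is precisely the informal statement.

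The main obstacle — such as it is — is purely the combinatorial check that the $M$-dependence in $(1-\alpha)(M+N)-3/4$ and in $2(L_2-M+1)$ does not conflict: increasing $M$ helps the first but the second contains $-M$, so one must increase $L_2$ at least as fast as $M$ (permissible since (\ref{eq:tightcom}) only requires $L_2 < 2M - 1/2$, leaving room to take, e.g., $L_2 \sim \tfrac32 M$). Once this is noted, there is no real difficulty; the work is entirely in verifying that "sufficiently smooth / sufficiently many moments / sufficiently well-localized" can be quantified so that (\ref{eq:conditions}) holds with the fixed choice $\alpha = 3/4$, and this is routine.
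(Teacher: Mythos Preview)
Your proposal is correct and is exactly the intended derivation: the paper gives no separate proof of Theorem \ref{thm:invinf} because it is meant to be read as an immediate corollary of the technical Theorem \ref{thm:inverse}, obtained by letting $M,L,L_1,L_2,P_j$ be large enough so that the only binding constraint in (\ref{eq:conditions}) is $N+2<K-3/4$. Your observation that the competing $M$-dependence in $(1-\alpha)(M+N)-3/4$ and $2(L_2-M+1)$ is harmless under (\ref{eq:tightcom}) (e.g.\ $L_2\sim\tfrac32 M$) is precisely the small bookkeeping point one has to check, and you handle it correctly.
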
 

\section{Resolution of the Wavefront Set}\label{sec:res}
We now draw some conclusions to the previous results. They all follow immediately from Theorems \ref{thm:direct} and \ref{thm:inverse}.
In terms of resolving the $N$-Wavefront Set we have the following result which we formulate only for $f\in L^2(\R^2)$ for simplicity.

\begin{theorem}[Resolution of the Wavefront Set I]
Let $f\in L^2(\R^2)$, $N\in \R$ and $\varepsilon >0$. Then there exist $P,M,L,L_1,L_2$ such that for all functions $\psi\in H_{(N,0)}(\R^2)$ with $M$ vanishing moments in $x_1$-direction, decay of order $P$ towards infinity, $C^L$ in the second coordinate and $L_1,L_2$ as in Lemma \ref{lem:tightwin} we have the following result: write $\mathcal{D}=\mathcal{D}_1\cup\mathcal{D}_2$, where
$\mathcal{D}_1 = \{(t_0,s_0)\in \R^2 \times [-1,1]:$ for $(s,t)$ in a neighbourhood $U$ of $(s_0,t_0)$, 
$|\mathcal{SH}_\psi f(a,s,t)| = O(a^N)$, with the implied constant uniform over $U\}$ and 
$\mathcal{D}_2 = \{(t_0,s_0)\in \R^2 \times (1,\infty]:$ for $(1/s,t)$ in a neighbourhood $U$ of $(s_0,t_0)$, 
$|\mathcal{SH}_{\psi^\nu}f(a,s,t)| = O(a^N)$, with the implied constant uniform over $U\}$.
Then 
\begin{equation}\label{eq:rwf}
	\mbox{WF}^{N+3/4+\varepsilon}(f)^c \subseteq  \mathcal{D}\subseteq 		\mbox{WF}^{N-11/4-\varepsilon}(f)^c.
\end{equation}
The precise values of $P,M,L,L_1,L_2$ can be read off Theorems \ref{thm:direct} and \ref{thm:inverse}.
\end{theorem}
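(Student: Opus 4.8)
The plan is to obtain the two inclusions in (\ref{eq:rwf}) directly from the Direct Theorem (Theorem \ref{thm:direct}) and the Inverse Theorem (Theorem \ref{thm:inverse}), after fixing once and for all the auxiliary parameters $P,M,L,L_1,L_2$ as functions of $N$ and $\varepsilon$. First I would reduce to the $\mathcal{D}_1$–case. The set $\mathcal{D}_2$ is defined exactly as $\mathcal{D}_1$ up to the coordinate flip $\xi\mapsto(\xi_2,\xi_1)$, under which $\psi^\nu$ on the vertical cone becomes $\psi$ on the horizontal cone and a line of slope $s_0$ becomes a line of slope $1/s_0$; since $(t_0,s_0)$ is an $R$-regular directed point of $f$ in direction $s_0$ if and only if the reflected function is $R$-regular in direction $1/s_0$, and since $s_0\in(1,\infty]$ maps to $1/s_0\in[0,1)\subseteq[-1,1]$ (the vertical direction $s_0=\infty$ going to slope $0$), the $\mathcal{D}_2$–statement follows verbatim from the $\mathcal{D}_1$–statement. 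Throughout, the implied constants produced by Theorems \ref{thm:direct} and \ref{thm:inverse} are uniform over a neighbourhood of $(t_0,s_0)$, which is exactly the form in which $\mathcal{D}_1$ and $N$-regularity are defined, so these transfers require no extra argument.

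For the inclusion $\mbox{WF}^{N+3/4+\varepsilon}(f)^c\subseteq\mathcal{D}$, take $(t_0,s_0)$ with $s_0\in[-1,1]$ an $(N+3/4+\varepsilon)$-regular directed point of $f$. I would first pick $\alpha\in(1/2,1)$ close enough to $1$ that $-3/4+\alpha(N+3/4+\varepsilon)\ge N$; this is possible because at $\alpha=1$ the left-hand side is $N+\varepsilon>N$ (and the degenerate ranges $N+3/4+\varepsilon\le 0$ are covered since there the left-hand side only increases as $\alpha$ decreases). With $\alpha$ fixed, I choose $P$, $M$, $L$ large enough — depending only on $N,\varepsilon,\alpha$ — that the remaining exponents $-3/4+P/2$, $(1-\alpha)M$, $(\alpha-1/2)L$ of (\ref{eq:shdecay}) are also $\ge N$. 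Then Theorem \ref{thm:direct}, applied with regularity order $N+3/4+\varepsilon$, yields $\mathcal{SH}_\psi f(a,s,t)=O(a^N)$ uniformly for $(s,t)$ near $(s_0,t_0)$, i.e. $(t_0,s_0)\in\mathcal{D}_1\subseteq\mathcal{D}$.

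For the inclusion $\mathcal{D}\subseteq\mbox{WF}^{N-11/4-\varepsilon}(f)^c$, take $(t_0,s_0)\in\mathcal{D}_1$, so $\mathcal{SH}_\psi f(a,s,t)=O(a^N)$ uniformly near $(s_0,t_0)$. Since Theorem \ref{thm:inverse} requires data with frequency support in a proper cone, I would replace $f$ by $f':=P_{\C_{1,2}}f\in L^2(\C_{1,2})^\lor$ and check that (\ref{eq:invass}) is inherited, i.e. $\langle f,(I-P_{\C_{1,2}})\psi_{ast}\rangle=O(a^k)$ for every $k$ when $s$ stays near $s_0\le 1$; this is routine from the frequency localization of $\psi_{ast}$ (for small $a$ and $|s|$ bounded away from $2$, $\hat\psi_{ast}$ lives essentially in $\C_{1,2}$, with tails controlled by the Fourier decay of $\psi$). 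Applying Theorem \ref{thm:inverse} to $f'$ with $K=N$: the constraint $N''+2<K-3/4$ in (\ref{eq:conditions}) forces precisely $N''<N-11/4$, so $N'':=N-11/4-\varepsilon$ is admissible as soon as the remaining four entries of the minimum in (\ref{eq:conditions}), together with (\ref{eq:loc2}) and the window condition (\ref{eq:tightcom}), hold. These are arranged by taking $M$ large, then $L_2$ close to the upper end $2M-1/2$ of the admissible range $M<L_2<2M-1/2$ so that $2(L_2-M+1)$ is large, then $L_1$, $L$, $P$ large — all depending only on $N,\varepsilon$, and this is exactly the list $P,M,L,L_1,L_2$ of the statement (which also supplies all regularity, moment and decay hypotheses needed by Theorems \ref{thm:direct}, \ref{thm:inverse} and Lemmas \ref{lem:coeffloc}, \ref{lem:tightwin}). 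Finally Lemma \ref{lem:proj}, applicable since $s_0\le 1<2$, transfers the $(N-11/4-\varepsilon)$-regularity of $f'$ back to $f$.

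The main obstacle is not any individual estimate — all of those are delivered by Theorems \ref{thm:direct}, \ref{thm:inverse} and Lemmas \ref{lem:coeffloc}, \ref{lem:proj}, \ref{lem:tightwin} — but the bookkeeping showing that one finite choice of $(P,M,L,L_1,L_2)$ depending only on $N$ and $\varepsilon$ simultaneously satisfies every hypothesis of those results for both inclusions. The genuinely constrained parameter is $L_2$, which is pinned inside the window $M<L_2<2M-1/2$ by (\ref{eq:tightcom}); making $2(L_2-M+1)$ large therefore forces $M$, and hence the required number of anisotropic vanishing moments, to be taken large first, after which everything else can be fixed. A secondary point needing care is the passage from $f$ to the conical projection $P_{\C_{1,2}}f$, where one must verify that the decay hypothesis (\ref{eq:invass}) is not destroyed.
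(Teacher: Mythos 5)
Your proposal is correct and follows essentially the same route as the paper: the first inclusion comes straight from the Direct Theorem with $\alpha$ chosen close to $1$ and $P,M,L$ large, and the second from projecting onto a cone $\C_{u,v}$ with $v>1$, transferring the coefficient decay (the paper does this via the $A$/$B_2$ estimates from the Direct Theorem's proof; note the low-$|\xi_1|$ tail requires the vanishing moments, not only the Fourier decay you cite), applying the Inverse Theorem, and returning via Lemma \ref{lem:proj}. The parameter bookkeeping you spell out, including the constraint that $L_2$ must sit inside $(M,2M-1/2)$ so that $M$ must be fixed first, is left implicit in the paper but matches its intent.
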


\begin{proof}
First we need to show that if $|\mathcal{SH}_\psi f(a,s,t)| = O(a^N)$ for $|s|\le 1$, then $|\mathcal{SH}_\psi P_{C_{u,v}}f(a,s,t)| = O(a^N)$ with some suitable cone with $v>1$ and $P,L,M$ large enough. To this end we can estimate the integral $\int_{\C_{u,v}^c}\hat f(\xi)\overline{ \hat \psi_{ast}(\xi)}d\xi = O(a^N)$ using precisely the same estimates as in the proof of Theorem \ref{thm:direct} except for the estimate for $B_1$ in that proof. This shows that 
indeed $|\mathcal{SH}_\psi P_{C_{u,v}}f(a,s,t)| = O(a^N)$. The reverse implication can also be shown using the same argument. We arrive at the following statement:
$$|\mathcal{SH}_\psi f(a,s,t)| = O(a^N)\mbox{ for }|s|\le 1 \Leftrightarrow |\mathcal{SH}_\psi P_{C_{u,v}}f(a,s,t)| = O(a^N)$$ for some cone with $v>1$ and $P,L,M$ large enough.
The case $s>1$ is similar. We also need the fact that for $s\le 1$ the point $(t,s)$ is an $N$-regular point of $P_{\C_{u,v}}f$ if and only if $(t,s)$ is an $N$-regular point of $f$which is Lemma \ref{lem:proj}.
Now the statement follows directly from Theorems \ref{thm:direct} and \ref{thm:invinf}.
\end{proof}

\begin{remark}
Certainly it would be desirable to have an equality in (\ref{eq:rwf}) instead of the inclusions that we obtained. Despite considerable effort we were not able to obtain such a result and we are not sure if such a results holds at all. We believe that the reason for this is that our notion of Wavefront Set does not correspond to any useful microlocal function space. Usually Fourier decay of a function is not measured as in our definition of the $N$-Wavefront Set but rather in terms of a Sobolev (or more generally Besov) norm restricted to a cone like for instance 
$$
	\int_{\C_\lambda} (1+|\xi|^2)^N |\hat f (\xi)|^2  <\infty
$$
for all $\xi$ in a cone $\C_\lambda$ around the direction $\lambda$. Such measurements of the directional Fourier decay of a (localized version of a) tempered distrubution lead to the concept of \emph{microlocal Sobolev spaces}. In \cite{candes2003} microlocal Sobolev regularity has been fully characterized in terms of a curvelet square function for tempered distributions (assuming compact frequency support for the curvelets). In future work we would like to generalize these results to our setting.
However, for our present purpose, which is essentially to generalize the results in \cite{Labate2009}, the results of the previous sections are --  as we shall see below in Theorem \ref{thm:rwf2} -- just what we need.
\end{remark}

In \cite{Labate2009}, the authors considered the full Wavefront Set defined as
$$
	\mbox{WF}(f)= \bigcup_{N\in\R} \mbox{WF}^N( f)
$$
and showed that for very specific choices of $\psi$ the decay rate of the shearlet transform coefficients determine $\mbox{WF}f$ where $f$ is a tempered distribution. We show that the result is actually valid for any
Schwartz functoin with infinitely many vanishing moments in $x_1$-direction.
First we show this for $f$ with frequency support in a conical wedge.

\begin{theorem}
Assume that $\psi$ is a Schwartz test function with infinitely many vanishing moments in $x_1$-direction. Then 
$$
	\mbox{WF}(f) = \{(t,s)\subseteq \R^2 \times [-v,v]: \  \mathcal{SH}_\psi f (a,s,t)\mbox{ does not decay rapidly locally around }(t,s)\}
$$
for any tempered distribution $f$ with frequency support in $\C_{u,v}$ for $u,v>0$.
\end{theorem}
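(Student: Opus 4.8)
The plan is to read the theorem off from the combination of the Direct Theorem (Theorem~\ref{thm:direct}) and the Inverse Theorem (Theorem~\ref{thm:inverse}), applied at every regularity order $N$. First I would record the elementary point that a Schwartz function $\psi$ with infinitely many vanishing moments in $x_1$-direction satisfies the hypotheses of both theorems with every parameter at our disposal: being Schwartz it obeys $\psi(x)=O((1+|x|)^{-P})$ and $\psi\in H_{(0,L)}(\R^2)$ for all $P,L$, and $\hat\psi\in L^1(\R^2)$; having infinitely many vanishing moments forces $\hat\psi$ to vanish to infinite order on $\{\xi_1=0\}$, so $\xi_1^{-M}\hat\psi$, $\hat\theta^j$ and $(\partial_{x_2}^L\theta^j)^\land$ (with $\theta^j=\partial_{x_1}^j\psi$) are again Schwartz and in particular integrable, for all $M,L,j$. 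I would also recall that $\mbox{WF}(f)^c=\bigcap_{N>0}\mbox{WF}^N(f)^c$ and that $\mathcal{SH}_\psi f$ decays rapidly locally around $(t_0,s_0)$ precisely when $\mathcal{SH}_\psi f(a,s,t)=O(a^k)$ for every $k$, uniformly for $(s,t)$ near $(s_0,t_0)$.

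For the inclusion of $\mbox{WF}(f)$ in the set where $\mathcal{SH}_\psi f$ fails to decay rapidly I would argue contrapositively: if $\mathcal{SH}_\psi f$ decays rapidly near $(t_0,s_0)$, then for any prescribed $N$ I choose the moment order, the smoothness order and the exponent $\alpha$ large enough — all available, since $\psi$ has moments and decay of every order — that (\ref{eq:loc2}), (\ref{eq:tightcom}) and (\ref{eq:conditions}) hold, using that the hypothesis furnishes $\mathcal{SH}_\psi f(a,s,t)=O(a^K)$ with $K$ as large as needed; then Theorem~\ref{thm:inverse} shows $(t_0,s_0)$ is an $N$-regular directed point of $f$, and since $N$ was arbitrary, $(t_0,s_0)\notin\mbox{WF}(f)$. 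For the reverse inclusion, if $(t_0,s_0)\notin\mbox{WF}(f)$ then it is an $N$-regular directed point for every $N$, and Theorem~\ref{thm:direct} with, say, $\alpha=3/4$ yields $\mathcal{SH}_\psi f(a,s,t)=O(a^{-3/4+P/2}+a^{M/4}+a^{-3/4+3N/4}+a^{L/4})$ uniformly near $(t_0,s_0)$; as $P,M,N,L$ may be taken arbitrarily large this is $O(a^k)$ for every $k$. Combining the two inclusions gives the asserted equality, and the restriction $s\in[-v,v]$ is automatic because $\mbox{supp}\,\hat f\subseteq\C_{u,v}$ confines every singular direction of $f$ to a slope of modulus at most $v$.

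The point that still needs care is that $f$ is merely a tempered distribution, whereas Theorems~\ref{thm:direct} and~\ref{thm:inverse} are stated for $f\in L^2(\R^2)$ and $f\in L^2(\C_{u,v})^\lor$ respectively. In the direct direction this is routine along the lines of Remark~\ref{rem:loc}: a localization $\Phi f$ is compactly supported, hence $\widehat{\Phi f}$ is a polynomially bounded function, and the Fourier-side estimates in the proof of Theorem~\ref{thm:direct} go through once Cauchy--Schwarz is replaced by H\"older and the polynomial growth is absorbed into the rapidly decaying factor $\hat\psi$, while the tail $\langle(1-\Phi)f,\psi_{ast}\rangle$ is $O(a^k)$ by the spatial decay of $\psi$. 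In the inverse direction the one genuinely new ingredient is that the tight-frame reproduction formula of Theorem~\ref{thm:frame3} still holds for every tempered distribution with frequency support in $\C_{u,v}$: its synthesis operator is the Fourier multiplier $\Delta_{u,v}(\psi)+\chi_{\C_{u,v}}|\hat W|^2=C_\psi\chi_{\C_{u,v}}$, which acts as $C_\psi$ times the identity on such $f$, so the identity $f=\frac1{C_\psi}\bigl(\mbox{window part}+\mbox{shearlet part}\bigr)$ is valid; granted this, the estimates of $A$ and $B$ in the proof of Theorem~\ref{thm:inverse}, and Lemmas~\ref{lem:loc},~\ref{lem:proj} and~\ref{lem:coeffloc}, extend to the distributional setting with H\"older's inequality and repeated partial integration replacing Cauchy--Schwarz. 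I expect this propagation of the $L^2$ arguments of Sections~\ref{sec:direct}--\ref{sec:inverse} to tempered distributions with conical frequency support to be the main — though essentially technical — obstacle; the quantitative heart of the matter, that a Schwartz shearlet with infinitely many vanishing moments drives every exponent in (\ref{eq:shdecay}) and (\ref{eq:conditions}) to infinity, is immediate.
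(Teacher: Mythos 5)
Your proposal is correct and follows the same route as the paper: the paper's own proof simply observes that the $L^2(\C_{u,v})^\lor$ case is already contained in Theorems \ref{thm:direct} and \ref{thm:inverse} (with all exponents $P,M,N,L$ driven to infinity for a Schwartz $\psi$ with infinitely many moments) and that the tempered-distribution case follows by repeating those arguments. You have in fact spelled out more of the quantifier bookkeeping and the distributional extension (H\"older in place of Cauchy--Schwarz, Paley--Wiener-type polynomial bounds on $\widehat{\Phi f}$) than the paper does.
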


\begin{proof}
We have already proved this result for $f\in L^2(\C_{u,v})^\lor$. Since $\psi$ is a test function the generalization to tempered distributions follows easily by just repeating the same arguments.
\end{proof}

The following theorem has been proven in \cite{Labate2009} for very specific choices of $\psi$. As already stated, all we require is infinitely many vanishing moments in the $x_1$-direction.

\begin{theorem}[Resolution of the Wavefront Set II]\label{thm:rwf2}
Let $\psi$ be a Schwartz function with infinitely many vanishing moments in $x_1$-direction.
Let $f$ be a tempered distribution and $\mathcal{D}=\mathcal{D}_1\cup\mathcal{D}_2$, where
$\mathcal{D}_1 = \{(t_0,s_0)\in \R^2 \times [-1,1]:$ for $(s,t)$ in a neighbourhood $U$ of $(s_0,t_0)$, 
$|\mathcal{SH}_\psi f(a,s,t)| = O(a^k)$ for all $k\in \mathbb{N}$, with the implied constant uniform over $U\}$ and 
$\mathcal{D}_2 = \{(t_0,s_0)\in \R^2 \times (1,\infty]:$ for $(1/s,t)$ in a neighbourhood $U$ of $(s_0,t_0)$, 
$|\mathcal{SH}_{\psi^\nu} f(a,s,t)| = O(a^k)$ for all $k\in \mathbb{N}$, with the implied constant uniform over $U\}$.
Then 
$$
	\mbox{WF}(f)^c = \mathcal{D}.
$$
\end{theorem}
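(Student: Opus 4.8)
The plan is to reduce the statement to the case, just established in the previous theorem, of a tempered distribution whose Fourier transform is supported in a conical wedge $\C_{u,v}$, and then to reassemble the global claim by means of the cone decomposition. The point that makes everything work is that a Schwartz function $\psi$ with infinitely many vanishing moments in the $x_1$-direction satisfies, for \emph{every} admissible choice of the parameters appearing in Theorems \ref{thm:direct}, \ref{thm:inverse} and \ref{thm:invinf}, all of their hypotheses; hence for each fixed $N\in\N$ both directions of the desired equivalence are available. Moreover, since $\psi$ is a test function, all the pairings $\langle f,\psi_{ast}\rangle$ make classical sense and every step below extends from $L^2$-functions to arbitrary tempered distributions exactly as indicated in Remark \ref{rem:loc} and in the proof of the previous theorem.

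First I would handle the directions $s_0\in[-1,1]$, that is, the set $\mathcal{D}_1$. Fix $u>0$ small and $v>1$ and consider the conical frequency projection $P_{\C_{u,v}}f$. By Lemma \ref{lem:proj} (whose proof, via Lemma \ref{lem:loc} and Remark \ref{rem:loc}, applies to tempered distributions), $(t_0,s_0)$ is an $N$-regular directed point of $f$ precisely when it is one of $P_{\C_{u,v}}f$, for every $N$; hence $(t_0,s_0)\in\mbox{WF}(f)^c$ iff $(t_0,s_0)\in\mbox{WF}(P_{\C_{u,v}}f)^c$. Now $P_{\C_{u,v}}f$ has frequency support in $\C_{u,v}$, so the previous theorem describes $\mbox{WF}(P_{\C_{u,v}}f)^c$ as the set of $(t,s)$ around which $\mathcal{SH}_\psi(P_{\C_{u,v}}f)(a,s,t)$ decays rapidly in $a$, uniformly. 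It only remains to exchange $P_{\C_{u,v}}f$ for $f$ inside the shearlet transform: their difference is $\mathcal{SH}_\psi(P_{\C_{u,v}^c}f)(a,s,t)$, and I would show this is $O(a^k)$ for all $k$, uniformly for $(s,t)$ in a neighbourhood of $(s_0,t_0)$ with $|s_0|\le 1<v$, by repeating the estimates in the proof of Theorem \ref{thm:direct} with the $\xi$-integration restricted to $\C_{u,v}^c$ --- only the term playing the role of $B_1$ there needs a different argument, and there it is in fact easier because $\widehat{P_{\C_{u,v}^c}f}$ vanishes on a full cone around the line of slope $s_0$. This is exactly the estimate already used in the proof of the first resolution theorem, and it yields the equivalence defining $\mathcal{D}_1$.

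The directions $s_0\in(1,\infty]$, i.e. the set $\mathcal{D}_2$, are dealt with by the identical argument after interchanging the roles of $x_1$ and $x_2$: one replaces $\psi$ by $\psi^\nu$, the wedge $\C_{u,v}$ by $\C^\nu_{u,v}$, and the slope $s_0$ by its reciprocal $1/s_0\in[0,1)$, the endpoint $s_0=\infty$ corresponding to the shear parameter $0$ (the $x_2$-axis direction), which requires no special treatment. Since $\mathcal{D}_1$ and $\mathcal{D}_2$ between them account for all directed points of $f$ --- the overlap at $s_0=1$ being harmless, as there both descriptions reduce through a wedge with $v>1$ to the same regularity statement --- combining the two equivalences with $\mbox{WF}(f)=\bigcup_{N}\mbox{WF}^N(f)$ gives $\mbox{WF}(f)^c=\mathcal{D}$.

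The only genuinely delicate point is the comparison at the end of the second paragraph: one must verify that the conical frequency cutoff neither destroys decay (rapidly decaying coefficients of $f$ remain rapidly decaying for $P_{\C_{u,v}}f$) nor manufactures it (the coefficients of $P_{\C_{u,v}^c}f$ decay rapidly at directions strictly inside the cone), both estimates holding uniformly over an entire neighbourhood of $(s_0,t_0)$. This is precisely where the Schwartz decay and the infinitely many vanishing moments of $\psi$ enter, and it is essentially the computation of Theorem \ref{thm:direct} reused verbatim; everything else --- the reduction to a wedge, the distributional pairings, and the endpoint $s_0=\infty$ --- is routine.
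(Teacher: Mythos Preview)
Your proposal is correct and follows essentially the same route as the paper. The paper's own proof of this theorem is a single sentence invoking Theorems \ref{thm:direct} and \ref{thm:inverse} ``making the usual adaptions to handle general tempered distributions''; your write-up is simply a fleshed-out version of that sentence, carrying out explicitly the cone reduction (via Lemma \ref{lem:proj} and the estimate comparing $\mathcal{SH}_\psi f$ with $\mathcal{SH}_\psi P_{\C_{u,v}}f$) that the paper already performed in the proof of the preceding ``Resolution of the Wavefront Set I'' theorem.
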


\begin{proof}
This is an immediate consequence of Theorems \ref{thm:direct} and \ref{thm:inverse} making the usual adaptions to handle general tempered distributions.
\end{proof}

\section{Concluding remarks}
In the present paper we have shown that there is a lot of freedom in choosing a shearlet. In particular this allows us to define a shearlet decomposition with respect to a compactly supported shearlet. Be it a tensor product wavelet or any directional derivative of a smooth function, our results show that any such function possesses the ability to resolve directional features of a given function.  For future work in this direction we would like to study the following:

\begin{itemize}
	\item {\bf Tight Frames.} As already suggested in Remark \ref{rem:tight}, it is possible to enformce a tight frame property by choosing an 			
	appropriate window function. We showed that for functions with frequency support in a cone this leads to useful window functions. 
	We want to pursue this approach in more detail with the goal of constructing compactly supported tight frames for $L^2(\R^2)$.
	\item {\bf Discretization.} We would like to discretize the frame construction using results in \cite{Fournasier2005} and study 								computational issues like inversion of the frame operator and so on.
	\item {\bf Sparsity.} For curvelet and shearlet transforms there exist several results confirming sparsity of certain objects (images, FIOs) 		in the respective representation \cite{guo2007,guo2008,candes2002, candes2002a,Candes2004}.
		We would like to extend these results to our general setting.
	\item {\bf Microlocal spaces.} In \cite{candes2003} the authors proved a characterization of microlocal Sobolev spaces via a curvelet square 		function. We would like to extend these results to our setting and also more general microlocal Besov spaces.
	\item {\bf Compactly supported curvelets?} Do functions with directional moments also serve as curvelets, in other words, is it possible to 		construct frames from functions with anisotropic moments by replacing the shear transform by rotations? This has actually been done in 				\cite{Smith1998}, see also the section on 'Hart Smith's transformation' in \cite{candes2003}. \end{itemize}

\section{Acknowledgments}
The research for this paper has been carried out while the author was working at the Center for Geometric Modeling and Scientific Visualization at KAUST, Saudi Arabia.
We thank Hans-Georg Feichtinger for several useful comments.

{\appendix
\section{Appendix}
\begin{proof}[Proof of Theorem \ref{thm:frame}]
To prove the theorem we need to estimate
\begin{equation}\label{eq:frame1}
	\int_{0<a < \Gamma ,\ |s|< \Xi} |\hat \psi \big(a\xi_1, \sqrt{a}(\xi_2 - s\xi_1)\big)|^2 a^{-3/2}da ds
\end{equation}
for $(\xi_1,\xi_2)\in \C_{u,v}$.
Notice that by assumption 
$$
	\int_{a \in \R_+ ,\ s \in \R} |\hat \psi \big(a\xi_1, \sqrt{a}(\xi_2 - s\xi_1)\big)|^2 a^{-3/2}da ds = C_\psi\quad \forall (\xi_1,\xi_2)\in 	\R^2
$$
and
$$
	\int_{a \in \R_+ ,\ s \in \R} |\hat \theta \big(a\xi_1, \sqrt{a}(\xi_2 - s\xi_1)\big)|^2 a^{-3/2}da ds = C_\theta\quad \forall 							(\xi_1,\xi_2)\in \R^2,
$$
where $\hat\psi(\xi_1,\xi_2) = \xi_1^\varepsilon \hat\theta (\xi_1,\xi_2)$.
Since for all $\xi \in \C_{u,v}$ we have $|\xi_2|\le v|\xi_1|$ and therefore the estimate 
\begin{equation}\label{eq:framecon}
	|\xi_2 - s\xi_1|\geq (|s|-v)|\xi_1|
\end{equation}
 holds. Furthermore, since
$\psi$ has Fourier decay of order $\tau$ in the second coordinate, we have the decay estimate
\begin{equation}\label{eq:framedec}
	|\hat \psi(\xi_1,\xi_2)| \le C(1+|\xi_2|^{\tau})^{-1}.
\end{equation} 
Now we estimate for any $\delta >0$
$$
	\int_{a\in \R,\ s> \Xi}|\hat \psi \big(a\xi_1, \sqrt{a}(\xi_2 - s\xi_1)\big)|^2 a^{-3/2}da ds = \sqrt{|\xi_1|}\int|\hat \psi \big(\tilde{a}
, \sqrt{\tilde{a}/|\xi_1|}(\xi_2 - s\xi_1)\big)|^2 \tilde{a}^{-3/2}d\tilde a ds
$$ 

$$ 
	= \underbrace{\int_{\tilde{a}< \delta}}_{A}  + \underbrace{\int_{\tilde{a} > \delta}}_{B}.
$$
The first term can be estimated by 
\begin{eqnarray}
	A & \le &  \delta^{2\varepsilon} \sqrt{|\xi_1|} \int|\hat \theta \big(\tilde{a},\sqrt{\tilde{a}/|\xi_1|}(\xi_2 - s\xi_1)\big)|^2 	\tilde{a}^{-3/2}d\tilde a ds \nonumber\\\nonumber
	 & \le & \delta^{2\varepsilon} \int_{\tilde a\in \R_+ , s\in \R}\sqrt{|\xi_1|} |\hat \theta \big(\tilde{a},\sqrt{\tilde{a}/|\xi_1|}(\xi_2 - 	s\xi_1)\big)|^2 \tilde{a}^{-3/2}d\tilde a ds \le  C_\theta \delta^{2\varepsilon}.
\end{eqnarray}
To estimate $B$ we use (\ref{eq:framedec}) and (\ref{eq:framecon}):
\begin{eqnarray}\nonumber
	B &=&\sqrt{|\xi_1|}\int_{\tilde a >\delta , |s|>\Xi}|\hat \psi \big(\tilde{a}, \sqrt{\tilde{a}/|\xi_1|}(\xi_2 - s\xi_1)\big)|^2 			\tilde{a}^{-3/2}d\tilde a ds \\
	\nonumber
	& \le & C\sqrt{|\xi_1|}\int_{\tilde a >\delta , |s|>\Xi}\big(\sqrt{\tilde a /|\xi_1|}|\xi_2 - s\xi_1|\big)^{-2\tau}\tilde{a}^{-3/2}d\tilde a 	ds\\ 
	\nonumber
	& \le &C\sqrt{|\xi_1|}\int_{\tilde a >\delta , |s|>\Xi}\big(\sqrt{\tilde a / |\xi_1|}(|s| - v)|\xi_1|\big)^{-2\tau}\tilde{a}^{-3/2}d\tilde a 	ds\\\nonumber
	&\le &C|\xi_1|^{ 1/2-\tau} \int_{\tilde{a} > \delta}\tilde{a}^{-3/2-\tau}d\tilde a \int_{ |s| > \Xi}(|s|-v)^{-2\tau}ds \\\nonumber
	&\le&
	Cu^{1/2-\tau} \int_{\tilde{a} > \delta}\tilde{a}^{-3/2-\tau}d\tilde a \int_{ |s| > \Xi}(|s|-v)^{-2\tau}ds \to 0
\end{eqnarray}
for $\Xi\to \infty$. We have used that $|\xi_1| > u$.
It is now clear that by choosing $\delta$ small and $\Xi$ large enough, the integral
$\int_{a\in \R,\ |s|> \Xi}|\hat \psi \big(a\xi_1, \sqrt{a}(\xi_2 - s\xi_1)\big)|^2 a^{-3/2}da ds $ can be made arbitrarily small \emph{uniformly} in $\xi \in \C_{u,v}$.
Next we use the Fourier decay in the first variable to estimate

\begin{eqnarray*}
	\int_{a> \Gamma,\ |s|\le \Xi}|\hat \psi \big(a\xi_1, \sqrt{a}(\xi_2 - s\xi_1)\big)|^2 a^{-3/2}da ds &\le &
	\int_{a> \Gamma,\ |s|\le \Xi}(a|\xi_1|)^{-\mu}a^{-3/2}da ds\\
	& \le & u^{-\mu}2\Xi \int_{a>\Gamma}a^{-3/2 - \mu}da \to 0
\end{eqnarray*} 
for $\Gamma \to \infty$. It follows that the integral $\int_{a> \Gamma,\ |s|\le \Xi}|\hat \psi \big(a\xi_1, \sqrt{a}(\xi_2 - s\xi_1)\big)|^2 a^{-3/2}da ds$ can also be made arbitrarily small \emph{uniformly} in $\xi \in \C_{u,v}$ by choosing $\Gamma$ large enough. 

Now we are ready to finish our proof:
\begin{eqnarray*}
	(\ref{eq:frame1}) &=& C_\psi - \int_{a\in \R,\ |s|> \Xi}|\hat \psi \big(a\xi_1, \sqrt{a}(\xi_2 - s\xi_1)\big)|^2 a^{-3/2}da ds \\
	& &
	- \int_{a> \Gamma,\ |s|\le \Xi}|\hat \psi \big(a\xi_1, \sqrt{a}(\xi_2 - s\xi_1)\big)|^2 a^{-3/2}da ds.
\end{eqnarray*}
By the previous discussion we can thus bound
$$
	C_\psi - \nu \le (\ref{eq:frame1}) \le C_\psi
$$
uniformly for $\xi \in \C_{u,v}$, any $\nu> 0$ and $\Gamma,\ \Xi$ large enough. This concludes the statement.

\end{proof}
\begin{lemma}[proof of (\ref{eq:diracdec})]Assume that $\psi$ has $M$ vanishing moments in $x_1$-direction and that $\hat \psi$,  $\omega_1^{-M}\hat \psi(\omega)$, $\big(\frac{\partial^L}{\partial x_2^L}\psi\big)^\land \in L^1(\R^2)$. With $\Phi$ a smooth bump function we have for any $1/2<\alpha < 1$ and $s\neq s_0$ that
$$
	\mathcal{SH}_\psi \Phi\delta_{x_1+s_0x_2-u}(a,s,t) = O(a^{(1-\alpha)M -3/4}+ a^{-3/4 + (\alpha - 1/2)L}\big).
$$
\end{lemma}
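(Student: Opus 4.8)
The plan is to imitate the proof of Theorem \ref{thm:direct} almost verbatim, the only structural change being that the square-integrable function $f$ there is replaced by the compactly supported finite measure $\Phi\delta_{x_1+s_0x_2-u}$, whose Fourier transform is bounded rather than square integrable. Accordingly, every application of the Cauchy--Schwarz inequality in that proof is replaced by an $L^\infty$--$L^1$ H\"older pairing, and this is precisely the reason for imposing the hypotheses $\hat\psi,\ \omega_1^{-M}\hat\psi(\omega),\ \big(\tfrac{\partial^L}{\partial x_2^L}\psi\big)^\land\in L^1(\R^2)$ (note that $\omega_1^{-M}\hat\psi(\omega)=\hat\theta(\omega)$ where $\hat\psi=\omega_1^M\hat\theta$). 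Also, since $\Phi\delta_{x_1+s_0x_2-u}$ is already compactly supported, the preliminary localization step (the $(1-\Phi)f$ term) of the proof of Theorem \ref{thm:direct} is unnecessary, which is why the claimed bound carries no spatial-decay term.

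First I would record the one genuinely new computation. Writing $h:=(\Phi\delta_{x_1+s_0x_2-u})^\land$, the Projection Slice computation gives $(\delta_{x_1+s_0x_2-u})^\land(\xi)=e^{-2\pi i u\xi_1}\delta(\xi_2-s_0\xi_1)$, so $h=\hat\Phi*(\delta_{x_1+s_0x_2-u})^\land$ equals $h(\xi)=\int_\R\hat\Phi(\xi_1-\eta_1,\xi_2-s_0\eta_1)e^{-2\pi i u\eta_1}\,d\eta_1$; substituting $\sigma=\xi_1-\eta_1$ yields the majorant $|h(\xi)|\le g(\xi_2-s_0\xi_1)$ with $g(c):=\int_\R|\hat\Phi(\sigma,c+s_0\sigma)|\,d\sigma$. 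Since $\hat\Phi$ is Schwartz and the quadratic form $\sigma^2+(c+s_0\sigma)^2$ is bounded below by $c^2/(1+s_0^2)$, the function $g$ is bounded and rapidly decaying, and the estimate is independent of $u$; moreover the shift in $t$ only multiplies $h$ by a unimodular factor, so all estimates below are uniform in $t$. Consequently, for $s$ ranging over a compact set bounded away from $s_0$ (e.g. $s\in(-\Xi,\Xi)\setminus(s_0-\varepsilon,s_0+\varepsilon)$) one has $\xi_2-s_0\xi_1=\xi_1(\lambda-s_0)$ when $\xi_2/\xi_1=\lambda$, hence $|h(\xi)|\le C_R(1+|\xi|)^{-R}$ for every $R\in\N$ whenever $\xi_2/\xi_1$ lies in a fixed (uniform) neighbourhood of $s$. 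This is the assertion that $(t,s)$ lies in the complement of the (analytic) wavefront set of $\Phi\delta_{x_1+s_0x_2-u}$, now with the uniformity required in the proof of Theorem \ref{thm:inverse}.

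Next I would split $\mathcal{SH}_\psi\Phi\delta_{x_1+s_0x_2-u}(a,s,t)=\langle h,\hat\psi_{ast}\rangle$ exactly as in the proof of Theorem \ref{thm:direct}, using $\hat\psi_{ast}(\xi)=a^{3/4}e^{-2\pi i t\xi}\hat\psi(a\xi_1,a^{1/2}(\xi_2-s\xi_1))$, into the regions $|\xi_1|<a^{-\alpha}$ (call it $A$) and $|\xi_1|>a^{-\alpha}$. On $A$ I would write $\hat\psi(\xi)=\xi_1^M\hat\theta(\xi)$, bound $|\xi_1|^M\le a^{-\alpha M}$, and estimate $\int|h|\,|\hat\theta(a\xi_1,a^{1/2}(\xi_2-s\xi_1))|\,d\xi\le\|h\|_\infty\,a^{-3/2}\|\hat\theta\|_1$, obtaining $A=O(a^{M(1-\alpha)-3/4})$. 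On the second region I would perform the substitution $\tilde\xi_1=a\xi_1,\ \tilde\xi_2=a^{1/2}(\xi_2-s\xi_1)$ (so $d\xi=a^{-3/2}d\tilde\xi$) and split according to $|\tilde\xi_2|<\varepsilon'a^{1/2-\alpha}$ (call it $B_1$) and $|\tilde\xi_2|>\varepsilon'a^{1/2-\alpha}$ (call it $B_2$). On $B_1$ the slope of the argument of $h$ is $s+a^{1/2}\tilde\xi_2/\tilde\xi_1$, which lies within $\varepsilon'$ of $s$; choosing $\varepsilon'$ small enough (uniformly over $s$) that this falls in the rapid-decay cone, together with $|\tilde\xi_1|/a>a^{-\alpha}$, gives $|h|\le C_R(1+|\tilde\xi_1|/a)^{-R}\le C_R a^{\alpha R}$, and pairing against $\hat\psi\in L^1(\R^2)$ yields $B_1=O(a^{\alpha R-3/4})$ for every $R$, a negligible contribution. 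On $B_2$ I would write $\hat\psi(\tilde\xi)=\tilde\xi_2^{-L}\big(\tfrac{\partial^L}{\partial x_2^L}\psi\big)^\land(\tilde\xi)$, bound $|\tilde\xi_2|^{-L}\le(\varepsilon')^{-L}a^{(\alpha-1/2)L}$, and pair $\|h\|_\infty$ against $\big\|\big(\tfrac{\partial^L}{\partial x_2^L}\psi\big)^\land\big\|_1$ to get $B_2=O(a^{-3/4+(\alpha-1/2)L})$. Summing $A$, $B_1$, $B_2$ gives the asserted bound.

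The only mildly delicate point is the transversal decay estimate for $h$ together with its uniformity in $u$ and in $s$ over compact sets avoiding $s_0$; once this is in hand, the argument is a line-by-line transcription of the proof of Theorem \ref{thm:direct} with H\"older in place of Cauchy--Schwarz, so I expect no further obstacle.
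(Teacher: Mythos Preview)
Your proposal is correct and follows essentially the same route as the paper's own proof: both reduce to the splitting $A,B_1,B_2$ of Theorem~\ref{thm:direct}, replacing each Cauchy--Schwarz pairing by an $L^\infty$--$L^1$ H\"older estimate, after observing that $(\Phi\delta_{x_1+s_0x_2-u})^\land$ is a bounded function (being the convolution of the Schwartz function $\hat\Phi$ with a line measure). Your treatment is in fact slightly more explicit than the paper's---you compute the transversal decay of $h$ directly and track the uniformity in $u$ and in $s$ over compact sets avoiding $s_0$, which is exactly what the application in Theorem~\ref{thm:inverse} requires---but the underlying argument is the same.
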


\begin{proof}
The proof is very similar to the proof of Theorem \ref{thm:direct}, the only difference is that we use the H\"older inequality instead of Cauchy-Schwarz. We assume without loss
of generality that $u=0$. It is well-known that the Fourier transform of $\delta_{x_1+s_0x_2}$ is given by $\delta_{x_1+1/s_0 x_2}$. This implies
that the tempered distrubution given by $\hat f:=(\Phi\delta_{x_1+s_0x_2-u})^\land = \hat \Phi \ast \delta_{x_1+1/s_0x_2}$ is actually a bounded function. 
Now we seperate the integral
$$
	\mathcal{SH}_\psi \Phi\delta_{x_1+s_0x_2-u}(a,s,t) = \langle \hat \psi_{ast}, \hat f\rangle
$$
into $A,B_1$ and $B_2$ in the same way as in the proof of Theorem \ref{thm:direct}. The estimate for $B_1$ given there is also valid for the assumptions of the present lemma
and we get $B_1 \le a^{\alpha N-3/4}\|\hat \psi\|_1$ for any $N\in \N$. Now we turn to an estimate for $A$. 
We write $\hat \psi (\xi_1 , \xi_2) = \xi_1^M\hat \theta (\xi_1,\xi_2)$.
We use $\hat \theta\in L^1(\R^2)$, and $\hat f\in L^\infty(\R^2)$ to estimate $A$ as
\begin{eqnarray}
	A&=&a^{3/4}\int_{|\xi_1|< a^{-\alpha}} |\hat f (\xi_1,\xi_2)| |\hat \psi \big( a\xi_1 , a^{1/2}(\xi_2 - s\xi_1)\big)|d\xi\nonumber \\
	&=& a^{3/4}\int_{|\xi_1|< a^{-\alpha}} a^M|\xi_1|^M|\hat f (\xi_1,\xi_2)| |\hat \theta \big( a\xi_1 , a^{1/2}(\xi_2 - s\xi_1)\big)|d\xi\nonumber\\
	&\le& a^{M(1-\alpha)}a^{3/4}\int_{|\xi_1|< a^{-\alpha}} |\hat f (\xi_1,\xi_2)| |\hat \theta \big( a\xi_1 , a^{1/2}(\xi_2 - s\xi_1)\big)|d\xi \nonumber\\\nonumber
	&\le& a^{M(1-\alpha)}a^{-3/4}\int_{\R^2}|\hat f(\xi_1,\xi_2)||\hat \theta(\tilde \xi_1,\tilde \xi_2)| d\tilde \xi_1d\tilde \xi_2\\\nonumber
	&\le&  a^{(1-\alpha)M-3/4} \|\hat \theta\|_1\|\hat f\|_\infty.
\end{eqnarray}
The estimate for $B_2$ is similar and we omit it.

\end{proof}
}
\end{document}